\documentclass[12pt,reqno]{amsart}
\usepackage[headings]{fullpage}
\usepackage{amsmath,amssymb,amsthm}
\usepackage{bbold}
\usepackage[bookmarks=true,%
colorlinks=true,%
linkcolor=blue,%
citecolor=blue,%
filecolor=blue,%
menucolor=blue,%
urlcolor=blue,%
breaklinks=true]{hyperref}
\usepackage{graphicx,subcaption,url}
\usepackage{fullpage,comment}
\usepackage[shortlabels]{enumitem}
\setlist{font=\normalfont}
\usepackage{mathrsfs,mathtools,scalerel}
\usepackage{tikz,tikz-cd}
\usetikzlibrary{knots,hobby,math,decorations.markings}
\usepackage{ifthen}
\allowdisplaybreaks

\makeatletter
\renewcommand*{\fps@figure}{htpb!}
\makeatother

\newlist{enumcc}{enumerate}{1}
\setlist[enumcc]{leftmargin=0pt, itemindent=*}
\setlist{font=\normalfont}

\newtheorem{theorem}{Theorem}[section]
\theoremstyle{definition}
\newtheorem{proposition}[theorem]{Proposition}
\newtheorem{lemma}[theorem]{Lemma}
\newtheorem{definition}[theorem]{Definition}
\newtheorem{remark}[theorem]{Remark}
\newtheorem{corollary}[theorem]{Corollary}

\DeclareMathAlphabet{\AMSbb}{U}{msb}{m}{n}

\tikzset{knot diagram/every knot diagram/.style={
    background color=gray!20,clip width=5,end tolerance=5pt,clip radius=0.1cm}}

\tikzset{-o-/.code 2 args={
    \pgfkeysalso{decoration={markings,mark=at position #1 with {\arrow{#2}}},
      postaction={decorate}}
}}
\newcommand{\tikzbase}[1]{\node (ref) at (#1){\phantom{$-$}}}

\providecommand{\abs}[1]{\left|{#1}\right|}

\DeclareMathOperator{\tr}{tr}
\DeclareMathOperator{\diag}{diag}
\DeclareMathOperator{\id}{id}
\DeclareMathOperator{\Span}{span}

\DeclareMathOperator{\Hom}{Hom}
\DeclareMathOperator{\End}{End}
\DeclareMathOperator{\Aut}{Aut}

\DeclareMathOperator{\Ad}{Ad}

\newcommand{\ints}{\AMSbb{Z}}

\newcommand{\cx}{\AMSbb{C}}

\newcommand{\surface}{\Sigma}
\newcommand{\surclose}{\widehat{\surface}}
\newcommand{\punctures}{\mathcal{P}}

\newcommand{\triang}{\Delta}

\newcommand{\iunit}{\mathsf{i}}

\newcommand{\onevec}{\mathbf{1}}

\newcommand{\ideal}[1]{\langle#1\rangle}
\newcommand{\qtorus}{\AMSbb{T}}
\newcommand{\rdtorus}{\overline{\qtorus}\vphantom{\qtorus}}
\newcommand{\CF}{\mathrm{CF}}
\newcommand{\CFr}{\mathrm{CFr}}
\newcommand{\Kas}{\mathrm{K}}
\newcommand{\FGA}{\mathcal{A}}
\newcommand{\FGX}{\mathcal{X}}

\newcommand{\PSL}{\mathrm{PSL}}
\newcommand{\SL}{\mathrm{SL}}
\newcommand{\GL}{\mathrm{GL}}
\newcommand{\Sp}{\mathrm{Sp}}
\newcommand{\gl}{\mathfrak{gl}}
\newcommand{\embed}{\hookrightarrow}

\def\BZ{\AMSbb Z}

\def\BC{\AMSbb C}

\def\be{\begin{equation}}
\def\ee{\end{equation}}

\def\BLWY{Bonahon--Liu--Wong--Yang\,}

\newcommand{\facedef}[2]{
\begin{tikzpicture}[scale=0.8,baseline=0.28cm]
\draw[fill=gray!20!white] (0,1)--(0.6,0)--(1,1);
\draw[inner sep=0pt] (0.1,0.5)node{\vphantom{$b$}#1} (1,0.5)node{\vphantom{$b$}#2};
\draw[fill=white] (0.6,0)circle(2pt);
\end{tikzpicture}
}


\begin{document}
\title[A relation between the BB and the BLWY invariants]{
  A relation between the Baseilhac--Benedetti and the Bonahon--Liu--Wong--Yang
  invariants}

\author{Stavros Garoufalidis}
\address{
Shenzhen International Center for Mathematics, Department of Mathematics \\
Southern University of Science and Technology \\
1088 Xueyuan Avenue, Shenzhen, Guangdong, China \newline
{\tt \url{http://people.mpim-bonn.mpg.de/stavros}}}
\email{stavros@mpim-bonn.mpg.de}

\author{Tao Yu}
\address{Shenzhen International Center for Mathematics \\
Southern University of Science and Technology \\
1088 Xueyuan Avenue, Shenzhen, Guangdong, China}
\email{yut6@sustech.edu.cn}

\thanks{
  {\em Keywords and phrases}:
  hyperbolic surfaces, pseudo-Anosov homeomorphisms, quantum invariants, TQFT,
  roots of unity, Baseilhac--Benedetti, Kashaev, Bonahon--Liu--Wong--Yang,
  abelian invariants, 1-loop invariants.
}

\date{1 January 2026}

\begin{abstract}
  Baseilhac--Benedetti, following ideas of Kashaev, introduced invariants of
  pseudo-Anosov homeomorphisms of punctured hyperbolic surfaces that depend on a
  complex root of unity of odd order. Around the same time, Bonahon--Liu
  introduced another set of invariants of pseudo-Anosov homeomorphisms at roots of
  unity. A little later, Dimofte and the first
  author introduced invariants of cusped hyperbolic 3-manifolds at roots of unity
  using their geometric representation.
  In another effort, Bonahon--Wong--Yang introduced another set of invariants of
  pseudo-Anosov homeomorphisms at roots of unity. All these invariants are
  conjecturally closely related, and our aim is to prove a precise relation between
  the Baseilhac--Benedetti invariants, the Bonahon--Liu--Wong--Yang and the
  lesser-known abelian $\gl_1$-invariants.   
\end{abstract}

\maketitle

{\footnotesize
\tableofcontents
}


\section{Introduction}
\label{sec.intro}

\subsection{Invariants of pseudo-Anosov surface homeomorphisms}
\label{sub.intro}

Pseudo-Anosov (in short, pA) homeomorphisms of hyperbolic surfaces is a gem that
links two and
three-dimensional hyperbolic geometry, as was discovered by Thurston~\cite{Thurston}.
This connection leads to a concrete description of pA homeomorphisms and an explicit
determination of their invariants (see e.g.,~\cite{Farb}), such as their
two invariant singular foliations
on a surface, their stretch factors, the hyperbolic volume of their mapping torus,
and also a normal form of their conjugacy class expressed in terms of layered
triangulations~\cite{Agol}. What's more, all of these invariants are
explicitly computable (following the wish of Thurston) using standard software of
hyperbolic geometry such as \texttt{SnapPy} and \texttt{flipper}~\cite{snappy,flipper}.
In particular, the classification of pA classes comes together with easily accessible
examples illustrating the proven theorems.

So, in a sense, we know everything about pA surface homeomorphisms using hyperbolic
geometry. What more could we possibly want to know?
A partial answer to this question comes
from the problem of understanding topological quantum field theory (TQFT in short)
in 2+1 dimensions. The latter, among other things, assigns rich and
not-well-understood numerical invariants to 3-manifolds, and in particular to
mapping cylinders of pA surface homeomorphisms. These invariants typically depend
on a complex root of unity, and refinements of them suggest an extension of the
invariants using a background representation of the fundamental group to
$\SL_2(\BC)$. 

Baseilhac--Benedetti, following ideas of Kashaev~\cite{K95}, introduced invariants
of pseudo-Anosov homeomorphisms of punctured hyperbolic surfaces that depend on a
complex root of unity of odd order~\cite{BB2,BB:fiber}. Around the same time,
Bonahon--Liu introduced another set of invariants of pseudo-Anosov homeomorphisms
at roots of unity~\cite{BL}. A little later, Dimofte and the first
author introduced invariants of cusped hyperbolic 3-manifolds at roots of unity
using their geometric representation~\cite{DG2}.
In another effort, Bonahon--Wong--Yang introduced another set of invariants of
pseudo-Anosov homeomorphisms at roots of unity. All these invariants are
conjecturally closely related, and our aim is to prove a precise relation between
the Baseilhac--Benedetti (in short, BB) invariants, the Bonahon--Liu--Wong--Yang
(in short, BLWY) and the lesser-known abelian $\gl_1$-invariants.   

\subsection{Our results}
\label{sub.results}

Throughout the paper, we will work over the complex numbers $\cx$, and $q$ a complex
root of unity of odd order $n$. Note that $q^2$ was denoted by $\zeta$ in our
previous work~\cite{GY}.

The invariants of Baseilhac--Benedetti and \BLWY have similar origin.
They are both defined using an ideal triangulation $\lambda$ of a
punctured surface $\surface$ of genus $g$ with $p$ punctures of negative Euler
characteristic $\chi(\surface)=2-2g-p<0$. Such a triangulation gives rise
to two quantum tori (i.e, Laurent polynomial algebra of $q$-commuting operators),
one being a quotient $\qtorus^\CFr_q(\lambda)$ of the Chekhov--Fock algebra,
and another being the Kashaev algebra discussed in detail in
Sections~\ref{sub.kashaev} and~\ref{sub.CF} below.
A relation between the two is given by an embedding algebra map 
\begin{equation}
\label{3embed2}
\qtorus^\CFr_q(\lambda)\otimes_{\cx[P_\lambda]}\qtorus_q(H)
\embed\qtorus^\Kas_q(\lambda)
\end{equation}
of quantum tori (see Prop.~\ref{prop.3embed} below) where $\qtorus_q(H)$ is a
third quantum torus 
that depends on the homology $H=H_1(\surface,\BZ)$ of the surface and not on
the triangulation. The image of~\eqref{3embed2} is a quantum sub-torus of corank
$p-1$, where $p$ is the number of punctures of $\surface$.

An important point is that when $q$ is a root of unity, the representation theory
of a quantum torus is easy to describe, namely irreducible representations of
a quantum torus are induced by their restrictions to the center, a commutative
Laurent polynomial ring in several variables, and representation of the latter
are determined by their character. 

So far everything depends on an ideal triangulation $\lambda$ of $\surface$, but
any two such are related by a sequence of flips (and any two sequences are related
by a sequence of square and pentagon moves), and the quantum tori
$\qtorus^\CFr_q(\lambda)$ and $\qtorus^\Kas_q(\lambda)$ as well as the
comparison map~\eqref{3embed2} are related by isomorphisms. This generates an
action of the corresponding groupoid, and in particular it makes sense to talk about
characters $r$ that are invariant under a mapping class $\varphi$. 

The comparison map~\eqref{3embed2} decomposes a representation of
$\qtorus^\Kas_q(\lambda)$ into the tensor product of representations of
$\qtorus^\CFr_q(\lambda)$ and $\qtorus_q(H)$. This decomposition requires the use of
a generic character $r$ that admits a decoration $h$ such that $(r,h)$ is invariant
under the mapping class $\varphi$. Having done so, the decomposition expresses the
BB-invariants $ T^\Kas_{\varphi,r}(q)$ (Definition~\ref{def.K}) as a sum of the
BLWY-invariants $T^\CF_{\varphi,r,\zeta_v}(q)$ (Definition~\ref{def.CF}) times the
$\gl_1$-invariants $T^H_{\varphi,\zeta_v}(q)$ (Definition~\ref{def-gl1}). The latter
two depend additionally on puncture weights $\zeta_v\in\mu_n^p$%
\footnote{Here, $\mu_n$ denotes the set of all complex $n$-th roots of unity.} 
which assigns an $n$-th root of unity to each puncture such that the product is $1$.
The $\gl_1$-invariants are part of a TQFT in 2+1 dimensions based on the abelian
group $U(1)$ defined many years ago~\cite{MOO:gl1,Go:gl1}. Putting everything
together, we arrive at the following result.

\begin{theorem}
\label{thm.1}
With the above assumptions, we have
\begin{equation}
\label{inv-dec}
  \pm T^\Kas_{\varphi,r}(q)=\sum_{\zeta_v}\eta_{\zeta_v}T^\CF_{\varphi,r,\zeta_v}(q)
  T^H_{\varphi,\zeta_v}(q)
\end{equation}
where we sum over puncture weights $\zeta_v$ that are invariant under $\varphi$, and
$\eta_{\zeta_v} \in \mu_{n^{4g-4+2p}}$ are roots of unity to account for the
ambiguities of the invariants. The sign in \eqref{inv-dec} is the sign of the
puncture weight permutation induced by $\varphi$.
\end{theorem}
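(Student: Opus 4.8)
The plan is to prove Theorem~\ref{thm.1} by tracing the decomposition of a Kashaev-algebra representation through the embedding~\eqref{3embed2}, taking characters of the relevant centers, and keeping careful track of the finite ambiguities. First I would fix a triangulation $\lambda$ of $\surface$ together with a $\varphi$-invariant generic character $r$ of the center of $\qtorus^\Kas_q(\lambda)$, and a decoration $h$ making $(r,h)$ $\varphi$-invariant. Restricting along the corank-$(p-1)$ sub-torus image of~\eqref{3embed2}, the irreducible $\qtorus^\Kas_q(\lambda)$-representation $V^\Kas_r$ decomposes as a tensor product $V^\CF_{r,\zeta_v}\otimes V^H_{\zeta_v}$ only after one chooses a splitting of the central character $r$ into a Chekhov--Fock part and a homological part; the extra data needed is precisely a puncture weight $\zeta_v\in\mu_n^p$ with $\prod_v\zeta_v=1$, because the cokernel of the lattice map underlying~\eqref{3embed2} is (dual to) the subgroup of $\mu_n^p$ with trivial product. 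Summing $V^\Kas_r=\bigoplus_{\zeta_v}V^\CF_{r,\zeta_v}\otimes V^H_{\zeta_v}$ over all admissible $\zeta_v$ is the representation-theoretic content of~\eqref{inv-dec}.

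Next I would compute how the intertwiner realizing the mapping class $\varphi$ acts. By definition (Def.~\ref{def.K}, Def.~\ref{def.CF}, Def.~\ref{def-gl1}), each invariant is a normalized trace of the operator implementing $\varphi$ on the corresponding irreducible module, assembled from the elementary intertwiners attached to flips and to the square/pentagon moves. Because~\eqref{3embed2} is an equivariant embedding of quantum tori — the isomorphisms induced by flips commute with it up to the groupoid action — the intertwiner $L^\Kas_\varphi$ on $V^\Kas_r$ is conjugate to a block operator that, on the summand indexed by $\zeta_v$, is a scalar multiple of $L^\CF_{\varphi,r,\zeta_v}\otimes L^H_{\varphi,\zeta_v}$ whenever $\varphi$ fixes $\zeta_v$, and permutes summands whose weights are exchanged by the induced permutation of punctures. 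Taking the (suitably normalized) trace, the summands sent off the diagonal contribute nothing, the diagonal summands contribute $T^\CF_{\varphi,r,\zeta_v}(q)\,T^H_{\varphi,\zeta_v}(q)$ up to a scalar, and the overall sign is the sign of the puncture-weight permutation induced by $\varphi$ — this is where the $\pm$ in~\eqref{inv-dec} comes from, exactly as in the standard sign that appears when a permutation matrix is traced against a tensor decomposition.

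The main obstacle, which I expect to occupy the bulk of the proof, is controlling the scalar ambiguities $\eta_{\zeta_v}$. Each elementary intertwiner (for a flip, a square move, a pentagon move) is only well defined up to a scalar, and these scalars propagate through the composition defining $\varphi$; on the Kashaev side and on the Chekhov--Fock side the normalizations are chosen separately, so comparing them produces a discrepancy. The claim is that this discrepancy is always a root of unity of order dividing $n^{4g-4+2p}=n^{-2\chi(\surface)}$ — the exponent being the number of tetrahedra in a layered triangulation of the mapping torus, equivalently twice the number of triangles of $\lambda$. To pin this down I would (i) show that each elementary move contributes an ambiguity in $\mu_n$ (the $n$-th roots of unity arising because $q$ has odd order $n$ and the relevant Weil-type Gauss sums and quantum dilogarithm prefactors live in $\mu_n$ after the root-of-unity specialization), (ii) count the moves: a flip/pentagon decomposition of $\varphi$ uses a number of elementary moves that, after reduction, scales like the number of triangles, i.e. $2(2g-2+p)$, giving at most $n^{4g-4+2p}$ when the two sides' normalizations are both accounted for, and (iii) verify that these ambiguities are genuinely independent of the choices (triangulation, flip sequence, decoration $h$), using the square/pentagon relations to show any two presentations of $\varphi$ differ by an element of $\mu_{n^{4g-4+2p}}$. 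Once the ambiguity bound is established, combining the trace identity of the previous paragraph with the character decomposition of the first paragraph yields~\eqref{inv-dec} with $\eta_{\zeta_v}\in\mu_{n^{4g-4+2p}}$ and the asserted sign, completing the proof.
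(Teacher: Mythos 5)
Your overall skeleton matches the paper's: decompose the irreducible Kashaev representation into blocks $W_{\zeta_v}$ indexed by puncture weights via the sub-torus of Proposition~\ref{prop.3embed}, observe that the Kashaev intertwiner is block-structured and permutes blocks according to the puncture permutation, factor each invariant block as $A^\CF_{\zeta_v}\otimes A^H_{\zeta_v}$, and take traces so that only $\varphi$-fixed weights contribute. However, there is a genuine gap exactly where you place ``the bulk of the proof.'' The real difficulty is not a per-move ambiguity count but a \emph{relative normalization} problem: $T^\Kas$ is normalized by the $n^{4g-4+2p}$-th root of $\det A^\Kas$ (the exponent is the dimension of $W$, i.e.\ it comes from the definition~\eqref{TKdef}, not from counting elementary moves), while each $T^\CF_{\varphi,r,\zeta_v}$ and $T^H_{\varphi,\zeta_v}$ is normalized by the determinant of its \emph{own} block. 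For a single choice of the projective intertwiner $A^\Kas$, nothing a priori prevents the block determinants $\det\bigl(A^\Kas_{\zeta_v}\bigr)$ from being arbitrary nonzero complex numbers differing from block to block; in that case the discrepancies $\eta_{\zeta_v}$ would not be roots of unity. This is precisely the content of the paper's key Lemma~\ref{lem-key} (all blocks can simultaneously be given determinant $1$), whose proof occupies Section~\ref{sec.proofs}: the flip intertwiners $\rho_\lambda(\Psi^{\theta}_q(X^\#_i))$ have block determinant $1$ because the cyclic dilogarithm is normalized by $\prod_{x^n=t_e}\Psi^\theta_q(x)=1$ and its eigenvalues occur with equal multiplicity on each block, and the residual monomial intertwiner is factored as $B_1B_2B_3$ (permutation, transvection operators supported on $P_\lambda^\perp$, and rescalings) each of which has equal or unit block determinants. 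Your proposal contains no substitute for this argument.

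Moreover, the specific route you sketch would not close the gap. Step (ii) is false as stated: a flip presentation of $\varphi$ is not bounded by $2(2g-2+p)$ moves --- its length grows with the complexity of $\varphi$ (e.g.\ the example $T_b^5T_aT_c^{-1}$ in Section~\ref{sub.ex12} already uses $11$ flips) --- so no move count can produce the fixed exponent $n^{4g-4+2p}$; and even a per-move $\mu_n$ bound would only control the \emph{overall} scalar of $A^\Kas$, not the ratios of its block determinants, which is what actually matters. A smaller inaccuracy: the sign in~\eqref{inv-dec} does not arise from ``tracing a permutation matrix against a tensor decomposition'' (off-diagonal blocks simply contribute nothing to the trace); it arises because the block permutation forces $\det A^\Kas=\pm1$ after the blocks are normalized, so the determinant normalization of $T^\Kas$ introduces the sign of the puncture-weight permutation.
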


Some remarks are in order.

\begin{enumcc}[1.]
\item
  The identity of the above theorem is nontrivial since the representation of
  the mapping class group involved in $T^\Kas_{\varphi,r}(q)$ and
  $T^\CF_{\varphi,r,\zeta_v}(q)$ involves endomorphisms of vector spaces of dimension
  $n^{4g-4+2p}$ and $n^{3g-3+p}$, respectively.
\item
  Let $M_{\widehat\varphi}$ be the mapping torus of the mapping class
  $\widehat{\varphi}$ of $\surclose$ obtained by capping off all punctures. Then
  $\abs{T^H_{\varphi,\zeta_v}(q)}=\frac{1}{\sqrt{n}}
  \abs{H_1(M_{\widehat\varphi},\BZ/n\BZ)}^{1/2}$ or zero (see
  Corollary~\ref{cor-TH-values} below). When $\surface$ has only one puncture,
  then $\zeta_v=1$ and Equation~\eqref{inv-dec} simplifies to
\begin{equation}
\label{inv-dec2}
T^\Kas_{\varphi,r}(q)= \pm\eta_1T^\CF_{\varphi,r,1}(q) T^H_{\varphi,1}(q) \,.
\end{equation}
In addition, when $M_{\widehat\varphi}$ has the minimal $\BZ/n\BZ$-homology, that is,
$H_1(M_{\widehat\varphi},\ints/n\ints)=\ints/n\ints$, then $\abs{T^H_{\varphi,1}(q)}=1$, and
\begin{equation}
\label{inv-dec3}
\abs{T^\Kas_{\varphi,r}(q)} = \abs{T^\CF_{\varphi,r,1}(q)} \,,
\end{equation}
which is considerably different from a naive guess based on dimensions of the representations.
\item
  The above theorem involves numerical invariants of mapping classes. There is a
  stronger version that involves conjugacy classes of intertwiners; see
  Equation~\eqref{thm1-ops} below.
\item
  When $\varphi$ is a pA punctured surface homeomorphism, there is a canonical
  choice of $(r,h)$ corresponding to the geometric representation of the mapping
  torus. This is discussed in Section~\ref{sec.ge} below.
\item
  A computer implementation of the BB and BLWY invariants has been given by the
  second author and an illustration for pA classes in genus 1 and
  1 or 2 punctures is discussed in Section~\ref{sub.ex11} and \ref{sub.ex12} below.
\end{enumcc}

We end this section with a remark regarding the nature of the tensors involved in the
representations of the quantum tori discussed above.

\begin{remark}
\label{rem.etale}  
In our prior work we conjectured that the 1-loop invariants of~\cite{DG2} agree
with the BLWY invariants of a pA homeomorphism of a punctured surface using the
geometric $\PSL_2(\BC)$-representation~\cite[Conj.1.1]{GY}. We remark that the
1-loop invariants are simply the constant terms of power series around each root
of unity which can be arithmetically re-expanded from a root of unity of order $n$
to a root of unity of order $pn$, and determine one another up to
Frobenius twist~\cite{GSWZ}.

In that sense, the invariants at roots of unity are the \'etale realization
of the TQFT invariants of~\cite{GSWZ}.
\end{remark}

\subsection{Organization of the paper}
\label{sub.plan}

In Section~\ref{sec.qtorus} we discuss the basics on quantum tori and their
representation theory.

In Section~\ref{sec.invs} we introduce the quantum tori associated to punctured
surfaces with ideal triangulations, discuss connecting maps to pass from one
triangulation to another and define the BB and BLWY invariants. 

In Section~\ref{sec.proofs} we give the proofs of the statements made in
the previous section, completing the proof of Theorem~\ref{thm.1}.

In Section~\ref{sec.ge} we discuss the combinatorics of the layered ideal
triangulation of the mapping torus of a pA surface homeomorphism and their
gluing equations and use them to convert the endomorphism invariants of
a pA homeomorphism into the numerical BB and BLWY invariants. 


\section{Quantum tori}
\label{sec.qtorus}

\subsection{Definition}
\label{sub.qtorus} 

Suppose $L$ is a lattice equipped with an integer-valued skew-symmetric
bilinear form $\omega$.
Then the quantum torus $\qtorus_q$ associated with this data is the $\BC$-algebra
\begin{equation}
\qtorus_q(L)=\Span_\BC\{x^k\mid k\in L\},\qquad
x^kx^l=q^{\omega(k,l)}x^{k+l}.
\end{equation}
The quantum torus is graded by $L$, and subgroups of $L$ define subalgebras of
$\qtorus_q(L)$ that are themselves quantum tori. The rank (i.e., the Gelfand--Kirillov
dimension) of $\qtorus_q(L)$ is the rank of $L$.

In general, $\qtorus_q(L)$ is non-commutative. Its center, the coordinates ring
of a commutative torus, is given by $Z(\qtorus_q(L)) = \qtorus_q(L_n)$ where
\begin{equation}
L_n=\{k \in L \mid \omega(k,l) = 0\bmod{n},\; \text{for all }l\in L\} \,.
\end{equation}
If $\{\chi_i\}_{i \in I}$ is a basis of $L$, then we have a presentation
\begin{equation}
\qtorus_q(L)=\cx\langle x_i^{\pm1},i\in I\rangle/\ideal{x_ix_j-q^{2\omega_{ij}}x_jx_i},
\qquad
x_i=x^{\chi_i},
\end{equation}
where we use the shorthand $\omega_{ij}=\omega(\chi_i,\chi_j)$. Monomials are given
by the Weyl-ordered product
\begin{equation}
x^k=[x_1^{k_1}\dotsm x_n^{k_n}]:=q^{-\sum_{i<j}\omega_{ij}k_ik_j}x_1^{k_1}\dotsm x_n^{k_n}.
\end{equation}
The Weyl-ordering is defined more generally for $q$-commuting elements, and it is
commutative and associative in the sense that
\begin{equation}
[\dotsm ab\dotsm]=[\dotsm ba\dotsm],\qquad[a[bc]]=[[ab]c]=[abc]
\end{equation}
for $q$-commuting elements $a,b,c$.

Now suppose $(L,\omega),(L',\omega')$ are defining lattices of quantum tori. A
homomorphism $f:L'\to L$ such that $f^\ast\omega=\omega'$ defines an algebra
homomorphism between the quantum tori in the obvious way. In fact, we can insert
scalars and obtain a rescaled monomial map defined by
\begin{equation}
f_{\hat{s}\hat{s}'}:\qtorus_q(L')\to\qtorus_q(L),\qquad
f_{\hat{s}\hat{s}'}(x^{\prime k})=\frac{\hat{s}'(k)}{\hat{s}(f(k))}x^{f(k)},
\end{equation}
where $\hat{s}:L\to\cx^\times,\hat{s}':L'\to\cx^\times$ are group homomorphisms.

\subsection{Representations}
\label{sub.reps}

In this paper, all representations are finite dimensional $\BC$-vector spaces.

A main point of using quantum tori is that their representations are easy to
describe. Indeed, irreducible representations of commutative tori are given by
characters, and irreducible representations of quantum tori are uniquely determined
by their restriction to the center, a commutative torus. More precisely, it is
well-known that every finite dimensional irreducible representation
$\rho: \qtorus_q(L) \to \End(W)$ of $\qtorus_q(L)$
is (up to isomorphism) uniquely determined by its restriction to the center, and the
latter is uniquely determined by a character $s \in \Hom(L_n, \BC^\times)$ by Schur's
lemma. All such representations of the quantum torus have the same dimension
$D=\abs{L/L_n}^{1/2}$. Summarizing, we have a correspondence
\begin{equation}
\label{rhosig}
\rho: \qtorus_q(L) \to \End(W) \quad \leftrightarrow \quad s:
L_n \to \BC^\times, \qquad \dim(W)=\abs{L/L_n}^{1/2} \,.  
\end{equation}
To distinguish with the character variety of a surface, we call $s$ the central
character.

Given a central character $s$ as in~\eqref{rhosig}, there is a homomorphism
$\sigma$ defined by a composition
\begin{equation}
\label{sigma}
\sigma: L \to \BC^\times, \qquad
s: L \xrightarrow{\times n} n L \subset L_n \xrightarrow{s} \BC^\times
\end{equation}
This homomorphism is related to the Frobenius homomorphism. The latter 
is the embedding
\begin{equation}
\Phi_q:\qtorus_1(L)\embed\qtorus_q(L),\qquad \Phi_q(x^k)=x^{nk}.
\end{equation}
The image of $\Phi_q$ is the sub-torus $\qtorus_q(nL)$, which is contained in the
center. By definition, $\qtorus_1(L)$ is a commutative Laurent polynomial ring,
which is the coordinate ring of a classical algebraic torus. A point in the
classical torus is represented by an algebraic map $\qtorus_1(L)\to\cx$,
which is encoded by the group homomorphism $\sigma:L\to\cx^\times$ defined above.

The homomorphism $\sigma$ can be used to define the reduced quantum torus
at $\sigma$ by
\begin{equation}
\rdtorus_{q,\sigma}(L)=\qtorus_q(L)/\ideal{x^{nk}-\sigma(k)}.
\end{equation}
The reduced quantum torus (though not a quantum torus itself) will play an important
role in Section~\ref{sec-coord}.

Note that there is a one-to-one correspondence between representations
of $\rdtorus_{q,\sigma}(L)$ and representations of $\qtorus_q(L)$ whose central
characters $s$ are compatible with $\sigma$, i.e., satisfy $s(nk)=\sigma(k)$ for
all $k \in L$. We will freely switch between quantum
tori and their reductions depending on which is more convenient. Similarly, we
say a scalar map $\hat{s}:L\to\cx^\times$ extends $\sigma$ if $\hat{s}(nk)=\sigma(k)$.

\begin{remark}
A benefit of the reduction is that $1+q^{2j-1}x^k$ is invertible in
$\rdtorus_{q,\sigma}(L)$ if $\sigma(k)\ne-1$. This is used in Section~\ref{sec-coord}.
\end{remark}

We can find a basis $\{\alpha_1,\beta_1,\dotsc,\alpha_r,\beta_r,\gamma_1,
\dotsc,\gamma_{p-1}\}$ of $L$ such that $\omega$ has block diagonal form
$\bigoplus_{i=1}^rd_iJ_2\oplus(0)$ where $J_2=\begin{pmatrix}0&1\\-1&0\end{pmatrix}$
is the standard 2-dimensional symplectic form. Assume
\begin{equation}
\label{cond-even}
\text{all $d_i$ are powers of $2$.}
\end{equation}
Then $L_n=L_\infty+nL$, where $L_\infty$ is the radical of $\omega$, and the
dimension $D=n^r$.

A representation can be constructed explicitly given a basis as above. Let $V=\cx^n$
with the standard basis $\{e_j\}_{j\in\ints/n\ints}$. For later uses, let
$V_\ell=\cx e_\ell$.

Define operators
\begin{equation}
Se_j=q^{2j}e_j,\qquad Te_j=e_{j+1}.
\end{equation}
Given a central character $s:L_n\to\cx^\times$, choose an extension
$\hat{s}:L\to\cx^\times$; still a homomorphism, which exists since $L$ is a free
abelian group. Then the representation
\begin{equation}
\rho:\qtorus_q(L)\to\End(V^{\otimes r})
\end{equation}
is given by
\begin{equation}
\label{eq-qtorus-rep}
\begin{split}
\rho(x^{\alpha_i})&=\hat{s}(\alpha_i) 1\otimes\dotsm\otimes S^{d_i}\otimes\dotsm\otimes1,
\\
\rho(x^{\beta_i})&=\hat{s}(\beta_i) 1\otimes\dotsm\otimes T\otimes\dotsm\otimes1,
\\
\rho(x^{\gamma_i})&=\hat{s}(\gamma_i).
\end{split}
\end{equation}
It is easy to see that $\rho$ is the irreducible representation with central character
$s$.

The following lemmas are easy corollaries of the construction.

\begin{lemma}
\label{lem-mon-eig}
Let $\rho:\rdtorus_{q,\sigma}(L)\to\End(W)$ be an irreducible representation and
$x^k\in\qtorus_q(L)$ not in the center. Then the eigenvalues of $\rho(x^k)$ are all
the $n$-th roots of $\sigma(k)$ with equal multiplicity.
\end{lemma}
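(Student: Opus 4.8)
The plan is to reduce everything to the explicit model representation~\eqref{eq-qtorus-rep}. Pick a basis $\{\alpha_i,\beta_i,\gamma_j\}$ of $L$ adapted to $\omega$ as above, so that $W\cong V^{\otimes r}$ and $\rho$ is given by~\eqref{eq-qtorus-rep}. Since $x^k$ is not central, its class in $L/L_n$ is nonzero; because $L_n=L_\infty+nL$, this means $k$ has a nonzero component along some $\alpha_i$ or $\beta_i$ modulo $n$. The claim about eigenvalues is invariant under the substitution $x^k\mapsto c\,x^k$ for a scalar $c$ (it just rescales $\sigma(k)$ and the eigenvalues simultaneously), and it is invariant under conjugation, so I am free to simplify $x^k$ as much as the group $\Sp$-type symmetries of the construction allow.

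First I would treat the rank-one case $L=\BZ^2$ with $\omega=d\,J_2$, $d$ a power of $2$. Here $\rho(x^\alpha)=\hat s(\alpha)S^d$ and $\rho(x^\beta)=\hat s(\beta)T$ on $V=\BC^n$. A general noncentral monomial is $x^{a\alpha+b\beta}$ with $(a,b)\not\equiv(0,0)\pmod n$; up to a scalar, $\rho(x^{a\alpha+b\beta})$ is a Weyl-ordered product $[S^{da}T^b]=(\text{root of unity})\,S^{da}T^b$. Since $n$ is odd and $d$ a power of $2$, $d$ is invertible mod $n$, so after an automorphism of $\BZ^2$ preserving $\omega$ (equivalently, changing the basis) I may assume the monomial is $S^aT^b$ with $\gcd(a,b,n)=1$. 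Now $S^aT^b$ acts on $V$ by $e_j\mapsto q^{2a(j+b)}e_{j+b}$ (up to the Weyl normalization), so it is a ``twisted shift'' by $b$. Let $m=\gcd(b,n)$ and $n=m\ell$. The orbits of $j\mapsto j+b$ on $\BZ/n$ have size $\ell$, and on each orbit $S^aT^b$ acts as a cyclic matrix whose $\ell$-th power is the scalar $\prod$ of the $q^{2a(\cdot)}$ around the orbit; a short computation (using $\sum$ over the orbit of the exponents) shows this scalar is exactly $q^{2ab\ell\cdot(\text{something})}$ times a fixed power, and in fact equals $\sigma(a\alpha+b\beta)^{1/\ell}$-type data — more cleanly: $(S^aT^b)^n=q^{2an\cdot(\text{stuff})}=\rho(x^{n(a\alpha+b\beta)})=\sigma(a\alpha+b\beta)$, since $x^{nk}$ is central and $\rho$ has central character compatible with $\sigma$. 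Hence $\rho(x^k)^n=\sigma(k)\cdot\id$, so every eigenvalue is an $n$-th root of $\sigma(k)$. To see all $n$ roots occur with equal multiplicity, note that conjugation by $\rho(x^{\beta})$ (in the case $x^k$ involves $S$) or by $\rho(x^\alpha)$ multiplies $\rho(x^k)$ by a fixed primitive $n$-th root of unity $q^{2d}$; this cyclic symmetry permutes the eigenspaces transitively, forcing equal multiplicities. This is the one genuine computation, but it is short.

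Finally I would bootstrap to general $L$. Writing $k=\sum_i(a_i\alpha_i+b_i\beta_i)+\sum_j c_j\gamma_j$, the $\gamma_j$-part contributes only the scalar $\prod_j\hat s(\gamma_j)^{c_j}$, which I absorb. Then $\rho(x^k)$ is (up to scalar and Weyl normalization) a tensor product $\bigotimes_i A_i$ where $A_i=\hat s(\cdots)S^{d_ia_i}T^{b_i}$ acts on the $i$-th factor $V$. Some factors may be scalars (when $(a_i,b_i)\equiv0$); at least one is not, since $x^k$ is noncentral. Eigenvalues of a tensor product are products of eigenvalues of the factors, and $\rho(x^k)^n=\bigotimes_i A_i^n$ with each $A_i^n$ scalar by the rank-one analysis, so $\rho(x^k)^n=\sigma(k)\id$ again. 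For equal multiplicities, apply the cyclic-symmetry argument in a factor where $A_i$ is non-scalar: conjugating by $\rho(x^{\beta_i})$ or $\rho(x^{\alpha_i})$ scales $\rho(x^k)$ by a primitive $n$-th root of unity, so the $n$ eigenspaces are cyclically permuted and hence equidimensional. The main obstacle is purely bookkeeping: keeping the Weyl-ordering scalars and the $\hat s$-factors straight so that the identity $\rho(x^k)^n=\sigma(k)\id$ comes out with the correct (trivial) constant, which follows formally from $x^{nk}$ being central and $\rho$ being compatible with $\sigma$, so no delicate estimate is needed.
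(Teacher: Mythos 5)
Your overall route is the same as the paper's: reduce to the explicit model \eqref{eq-qtorus-rep}. Your two mechanisms are nice and essentially correct in themselves: the identity $\rho(x^k)^n=\rho(x^{nk})=\sigma(k)\,\id$ (valid since $\omega(k,k)=0$ and the reduction kills $x^{nk}-\sigma(k)$) cleanly shows all eigenvalues are $n$-th roots of $\sigma(k)$, and conjugation by a suitable $\rho(x^l)$, which rescales $\rho(x^k)$ by $q^{2\omega(l,k)}$, gives the equal-multiplicity statement whenever that scalar is a \emph{primitive} $n$-th root of unity. (The paper instead completes $k$ to a basis with $\alpha_1=k$ and reads the spectrum off directly; your conjugation trick is a reasonable substitute.)

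The genuine gap is the step ``since $x^k$ is not central \dots I may assume $\gcd(a,b,n)=1$.'' Noncentrality only gives $(a,b)\not\equiv(0,0)\bmod n$; it does not give the coprimality you need, and without it the cyclic-symmetry argument collapses because $q^{2\omega(l,k)}$ need not be primitive for any $l$. Concretely, take $n=9$ and $k=3\alpha_1$: then $x^k$ is not central, but $\rho(x^k)$ is a scalar times $S^{3d_1}$, whose eigenvalues form a single coset of $\mu_3$ inside $\mu_9\cdot\hat{s}(\alpha_1)^3$, so the conclusion of the lemma itself fails; no argument can recover it from noncentrality alone. What is really being used (and what the paper's proof inputs with ``we can assume $k$ is primitive'') is that $k$ is primitive, equivalently that its content is prime to $n$: for primitive $k$, nondegenerate $\omega$ and condition \eqref{cond-even}, any odd prime dividing every $\omega(k,l)$ would divide every coordinate of $k$, so $\omega(k,L)=m\BZ$ with $m$ a power of $2$, hence prime to the odd $n$. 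A secondary repair: even for primitive $k$ you cannot always conjugate by one of $x^{\alpha_i},x^{\beta_i}$ acting in a single tensor factor --- for $n=15$ and $k=3\alpha_1+5\alpha_2$ no basis vector pairs with $k$ to a unit mod $15$ --- so choose instead a general $l\in L$ with $\omega(k,l)=m$ generating $\omega(k,L)$; then $q^{2\omega(l,k)}$ is primitive and your eigenspace-permutation argument goes through.
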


\begin{proof}
We can assume $k$ is primitive, in which case it can be part of a symplectic basis
with $\alpha_1=k$. Then the statement is clear from the construction
\eqref{eq-qtorus-rep}.
\end{proof}

\begin{lemma}
\label{lem-unitary}
For simplicity, suppose $\omega$ is symplectic. Let
$\rho:\rdtorus_{q,\sigma}(L)\to\End(V^{\otimes r})$ be the irreducible representation
defined by \eqref{eq-qtorus-rep}, and $f\in\Aut(\rdtorus_{q,\sigma}(L))$ be a
rescaled monomial map. Then $\rho$ and $\rho\circ f$ are unitarily equivalent.
\end{lemma}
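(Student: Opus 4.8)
The plan is to reduce the statement to the case where $f$ is, up to an inner automorphism, a genuine linear symplectic change of basis, and then to invoke the fact that any two irreducible representations of $\rdtorus_{q,\sigma}(L)$ with the same central character are isomorphic, upgrading the isomorphism to a unitary one by a standard averaging argument. First I would fix a $\rho$-invariant Hermitian inner product on $V^{\otimes r}$: since $q$ is a root of unity of odd order, the operators $S$ and $T$ defined by $Se_j=q^{2j}e_j$, $Te_j=e_{j+1}$ are unitary with respect to the standard inner product on $V=\cx^n$ (diagonal with unimodular entries, respectively a permutation matrix), so if the extension $\hat s$ is chosen to take values in the unit circle then $\rho(x^k)$ is unitary for every $k\in L$. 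Thus the standard inner product already makes $\rho$ a unitary representation; I would note this choice is harmless because rescaling $\hat s$ by a positive real character only conjugates $\rho$ by a positive diagonal operator, which changes nothing about unitary equivalence classes.

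Next I would analyze the rescaled monomial automorphism $f$. By definition $f=f_{\hat s\hat s'}$ is induced by a lattice automorphism $g\colon L\to L$ with $g^*\omega=\omega$ — i.e. $g\in\Sp(L,\omega)$ — together with scalars. The point is that $\rho\circ f$ is again an irreducible representation of $\rdtorus_{q,\sigma}(L)$: the scalars are chosen precisely so that $f$ descends to the reduction, hence $\rho\circ f$ has central character determined by the same $\sigma$ (one checks $\rho(f(x^{nk}))=\rho(x^{n g(k)})=\sigma(g(k))$, and since $g$ preserves $\omega$ it preserves $L_n$ and the compatibility with $\sigma$ is automatic from $\sigma\circ g=\sigma$, which holds because $g$ fixes the radical setwise and... here one must be slightly careful — see below). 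By the classification in~\eqref{rhosig}, $\rho$ and $\rho\circ f$ are isomorphic as abstract representations: there exists an invertible $U\in\End(V^{\otimes r})$ with $U\rho(a)U^{-1}=\rho(f(a))$ for all $a$.

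Finally I would upgrade $U$ to a unitary. Since both $\rho$ and $\rho\circ f$ are unitary for the standard inner product, the operator $U^*U$ commutes with $\rho(\qtorus_q(L))$, hence by irreducibility and Schur's lemma $U^*U=c\,\mathrm{id}$ for a positive scalar $c$; replacing $U$ by $c^{-1/2}U$ makes it unitary, and it still intertwines $\rho$ with $\rho\circ f$. This establishes the unitary equivalence.

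The main obstacle I anticipate is the compatibility bookkeeping in the middle step: one must verify that the rescaling data $(\hat s,\hat s')$ packaged into $f$ really does make $\rho\circ f$ land in the same reduced torus $\rdtorus_{q,\sigma}(L)$, i.e. that $\sigma$ is invariant under the symplectic automorphism $g$ underlying $f$. In the application $f$ is one of the flip/mutation isomorphisms, which by construction preserves the central data, so this should hold by fiat; but stating it cleanly requires recalling from the discussion around~\eqref{sigma} exactly how $\sigma$ transforms. If $g$ does \emph{not} fix $\sigma$ but merely sends it to another character $\sigma'$, then the correct statement is that $\rho$ and $\rho'\circ f$ are unitarily equivalent where $\rho'$ has central character $\sigma'$; the "Aut" hypothesis in the lemma is presumably meant to rule this out. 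Everything else — unitarity of $S$, $T$, Schur's lemma, the averaging trick — is routine.
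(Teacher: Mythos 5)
Your argument has a genuine gap at its very first step: you assume the extension $\hat{s}$ can be chosen to take values in the unit circle, so that $\rho$ itself becomes a unitary representation. But $\hat{s}$ is not free: for $\rho$ to factor through the reduced torus $\rdtorus_{q,\sigma}(L)$ one needs $\hat{s}(nk)=\sigma(k)$, i.e.\ $\hat{s}^n=\sigma$ as characters of $L$, so $\abs{\hat{s}(k)}=\abs{\sigma(k)}^{1/n}$. In the setting of this paper $\sigma$ is built from Kashaev/shear-bend coordinates of a decorated $\SL_2(\cx)$-character and is generically \emph{not} unimodular, so $\rho(x^{\alpha_i})=\hat{s}(\alpha_i)S^{d_i}\otimes\dotsm$ has eigenvalues of modulus $\abs{\hat{s}(\alpha_i)}\ne1$ and is not unitarizable. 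Your fallback claim --- that rescaling $\hat{s}$ by a positive real character merely conjugates $\rho$ by a positive diagonal operator --- is false: such a rescaling changes the central character (e.g.\ $\rho(x^{n\alpha_i})=\hat{s}(\alpha_i)^n$ changes), and the central character is a conjugation invariant; concretely, conjugating $T$ by a diagonal matrix can only multiply it by an $n$-th root of unity, never by a positive real $\ne1$. With the unitarity of $\rho$ gone, the Schur/averaging step ($U^\ast U$ central, hence scalar) has nothing to stand on, so the proposal does not prove the lemma. The hypothesis $f\in\Aut(\rdtorus_{q,\sigma}(L))$ does take care of the compatibility issue you flag at the end, but that was never the hard point.

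The actual content of the lemma is that the intertwiner can be chosen unitary even though neither $\rho$ nor $\rho\circ f$ is a unitary representation. The paper's proof gets this by hand: if $\rho(x)=B^{-1}\rho(f(x))B$, then the vectors $v_k=B(e_{k_1}\otimes\dotsm\otimes e_{k_r})$ are joint eigenvectors of the commuting normal operators $\rho(f(x^{\alpha_i}))$ with \emph{distinct} joint eigenvalues $q^{2d_ik_i}\hat{s}(\alpha_i)$, hence pairwise orthogonal for the standard Hermitian form; and the normalized shift operators $\rho(\hat{s}(l\cdot\beta)^{-1}f(x^{l\cdot\beta}))$, which send $v_k$ to $v_{k+l}$, are unitary because stripping the $\hat{s}$-scalars from any monomial leaves only products of $S$, $T$ and powers of $q$ (this is exactly where the rescaled-monomial hypothesis enters). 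So all $v_k$ have equal norm, and rescaling $B$ makes it unitary. If you want to salvage your approach, you would have to replace ``$\rho$ is unitary'' by this weaker but correct statement about the scalar-normalized operators; as written, the proposal fails for every non-unimodular $\sigma$, which is the case of interest.
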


\begin{proof}
The fact that they are isomorphic is due to the identical central character, a
condition hidden in the reduced quantum torus. Therefore, the nontrivial part
is the unitarity.

Suppose $\rho(x)=B^{-1}\cdot\rho(f(x))\cdot B$ for all $x\in\rdtorus_{q,\sigma}(L)$.
Then
\begin{equation}
v_k=B(e_{k_1}\otimes\dotsm\otimes e_{k_r})
\end{equation}
are simultaneous eigenvectors of $\rho(f(x^{\alpha_i}))$ with distinct simultaneous
eigenvalues $q^{2d_ik_i}\hat{s}(\alpha_i)$. Therefore, they are orthogonal with
respect to the standard Hermitian form on $V^{\otimes r}$.

On the other hand, a quick calculation shows that
\begin{equation}
  \rho(\hat{s}(l\cdot\beta)^{-1}f(x^{l\cdot\beta}))v_k=v_{k+l}\qquad
  \text{where }l\cdot\beta=\sum_{i=1}^r l_i\beta_i.
\end{equation}
Since $f$ is a rescaled monomial map,
$C=\rho(\hat{s}(l\cdot\beta)^{-1}f(x^{l\cdot\beta}))$ is the product of some
combination of $\rho(\hat{s}(\alpha_i)^{-1}x^{\alpha_i})$,
$\rho(\hat{s}(\beta_i)^{-1}x^{\beta_i})$ and some powers of $q$. This shows that
$C$ is unitary. In particular, all $v_k$ have the same norm.

Combining both parts, the columns of $B$ in the standard basis are orthogonal
with the same norm. By scaling the norm to be $1$, we obtain an intertwiner that
is unitary.
\end{proof}

\subsection{Restrictions to sub-tori}
\label{sub.res}

We look at a particular case of restrictions of irreducible representations of a
quantum torus to a sub-torus.

\begin{lemma}
\label{lem-sub-tori}
Let $(L,\omega)$ define the quantum torus $\qtorus_q(L)$ where $\omega$ is
nondegenerate and satisfies \eqref{cond-even}. Suppose $P\subset L$ is an isotropic
direct summand with orthogonal complement $P^\perp$ with respect to $\omega$
and $\rho:\qtorus_q(L)\to\End(W)$ is an irreducible representation
with central character $s:nL\to\cx^\times$. Then
\begin{equation}
W=\bigoplus_{u:P\to\cx^\times}W_u,\qquad
W_u=\{w\in W\mid \rho(x^k)w=u(k)w\text{ for }k\in P\}
\end{equation}
is the decomposition of $\rho$ into irreducible representations of
$\qtorus_q(P^\perp)$, where we require $u=s$ on $nP$.
\end{lemma}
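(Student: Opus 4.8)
The plan is to first reduce to an explicit model, then verify the two claims — that the $W_u$ are nonzero exactly when $u$ restricts to $s$ on $nP$, and that each such $W_u$ is an irreducible $\qtorus_q(P^\perp)$-module. Since $P$ is an isotropic direct summand, I would extend a basis of $P$ to a symplectic-type basis of $L$ adapted to the block decomposition $\bigoplus_i d_i J_2 \oplus (0)$ of $\omega$ guaranteed by \eqref{cond-even}; concretely one can arrange that $P$ is spanned by some subset of the $\alpha_i$'s (after a monomial change of coordinates, which by Lemma \ref{lem-unitary} does not affect the isomorphism class of $\rho$). Reindexing, say $P = \Span\{\alpha_1,\dots,\alpha_m\}$. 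Then $P^\perp = \Span\{\alpha_1,\beta_2^{?},\dots\}$ — more precisely $P^\perp$ contains all $\alpha_i$ and all $\beta_i$ for $i>m$, plus $\alpha_1,\dots,\alpha_m$ themselves (since $P$ is isotropic), but \emph{not} $\beta_1,\dots,\beta_m$. So $P^\perp$ is the quantum sub-torus on the coordinates $\{\alpha_i : \text{all } i\} \cup \{\beta_i : i>m\}$, with the first $m$ coordinates now central in $\qtorus_q(P^\perp)$.

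With the model \eqref{eq-qtorus-rep} in hand, $\rho(x^{\alpha_i}) = \hat s(\alpha_i)\, 1\otimes\cdots\otimes S^{d_i}\otimes\cdots\otimes 1$ acts on the $i$-th tensor factor $V = \cx^n$ with eigenbasis $\{e_j\}$ and eigenvalue $q^{2 d_i j}\hat s(\alpha_i)$ on $e_j$. Hence for $k = \sum_{i\le m} k_i \alpha_i \in P$, the operator $\rho(x^k)$ is diagonal in the tensor basis $e_{j_1}\otimes\cdots\otimes e_{j_r}$, so $W$ decomposes as $\bigoplus_u W_u$ over the joint eigencharacters $u: P\to\cx^\times$, and $W_u \ne 0$ iff $u$ is among the characters realized by a choice of $(j_1,\dots,j_m)$; these are exactly the characters with $u(n\alpha_i) = q^{2 d_i n j_i}\hat s(\alpha_i)^n = \hat s(n\alpha_i) = \sigma(\alpha_i) = s(n\alpha_i)$, i.e.\ $u = s$ on $nP$ (using that $q^{2n}=1$ and that the $d_i$ are invertible mod $n$ since $n$ is odd and $d_i$ is a power of $2$, so the $j_i$-dependence of $u$ on $\alpha_i/n$... — I should phrase this as: the map $(j_1,\dots,j_m)\mapsto u|_P$ is a bijection onto the set of extensions of $s|_{nP}$). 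Each fiber $W_u$ is spanned by those basis vectors with the first $m$ tensor-factor indices fixed, hence $W_u \cong V^{\otimes(r-m)}$ as a vector space, and the operators $\rho(x^{\alpha_i})$ ($i>m$), $\rho(x^{\beta_i})$ ($i>m$) restrict to $W_u$ to give precisely the standard irreducible representation \eqref{eq-qtorus-rep} of $\qtorus_q(P^\perp)$ with the appropriate central character — while the central generators $x^{\alpha_i}$, $i\le m$, act by the scalars $u(\alpha_i)$. Irreducibility of $W_u$ then follows from the same argument that \eqref{eq-qtorus-rep} is irreducible, or by citing \eqref{rhosig}.

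The step I expect to require the most care is the bookkeeping that $P^\perp$ really is the sub-torus I claimed and that the isotropic direct summand $P$ can be put in the normal form $\Span\{\alpha_1,\dots,\alpha_m\}$ with respect to a basis adapted to $\omega$. The existence of such a symplectic basis extending a given isotropic direct summand is standard linear algebra over $\ints$ for nondegenerate skew forms, but with the extra constraint \eqref{cond-even} on the $d_i$ one has to check the normalization survives; I would either invoke the structure theorem for skew forms over $\ints$ directly, or argue that only the reduced picture mod $n$ matters for the representation (where $\omega$ becomes a symplectic form over $\ints/n\ints$, $n$ odd, and the $d_i$ are units). The condition $u = s$ on $nP$ is forced precisely because $nP \subset nL = L_n \cap (\text{center direction})$ and $\rho$ already fixes the action there via $s$; the remaining freedom in $u$ is genuine and indexes the summands. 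Once the normal form is set up, everything else is a direct read-off from \eqref{eq-qtorus-rep}.
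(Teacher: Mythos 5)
Your proposal is correct and follows essentially the same route as the paper's proof: choose a symplectic-type basis adapted to $\omega$ in which $P$ is spanned by some of the $\alpha_i$, realize $\rho$ via the explicit model \eqref{eq-qtorus-rep} so that $\qtorus_q(P)$ acts diagonally, identify the joint eigenspaces $W_u$ (nonzero exactly for the $n^{\rank P}$ characters $u$ extending $s|_{nP}$, using that $2d_i$ is invertible mod odd $n$), and conclude irreducibility by a dimension count / direct identification. Your extra care about putting the isotropic direct summand into normal form is a point the paper simply asserts, and your handling of it is fine.
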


\begin{proof}
It is easy to see that each $W_u$ is a representation of $\qtorus_q(P^\perp)$. To
show that they are irreducible, we use the explicit construction \eqref{eq-qtorus-rep}.
Choose a symplectic basis $\alpha_i,\beta_i$ of $L$ such that $P$ is spanned by a
subset of $\alpha_i$. We can assume $\rho$ is constructed using such a basis. In this
construction, $\qtorus_q(P)$ acts diagonally, and we can see that each $W_u$ is
nontrivial. Then a simple dimension count proves the decomposition.
\end{proof}

\begin{corollary}
The lemma still holds with $P^\perp$ replaced by a sublattice of $P^\perp$ with index
a power of $2$.
\end{corollary}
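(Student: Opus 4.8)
The plan is to reduce the statement about a finite-index sublattice $P' \subseteq P^\perp$ (with $[P^\perp : P'] = 2^m$) to the already-established Lemma~\ref{lem-sub-tori} by an inductive "peeling off" argument. First I would observe that by the structure theory of finitely generated abelian groups, any such $P'$ sits in a chain $P' = P_0 \subset P_1 \subset \dots \subset P_m = P^\perp$ with each successive quotient $P_{j+1}/P_j \cong \ints/2\ints$, so by induction it suffices to treat the case $[P^\perp : P'] = 2$. Thus I fix a sublattice $P' \subset P^\perp$ of index $2$; the goal is to show each $W_u$ from Lemma~\ref{lem-sub-tori} decomposes into irreducibles of $\qtorus_q(P')$, all of the same dimension and with the appropriate central characters.

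The key step is to invoke the explicit model. As in the proof of Lemma~\ref{lem-sub-tori}, choose a symplectic basis $\{\alpha_i,\beta_i\}$ of $(L,\omega)$ so that $P$ is spanned by a subset of the $\alpha_i$, say $P = \Span\{\alpha_1,\dots,\alpha_\ell\}$; then $P^\perp = \Span\{\alpha_1,\dots,\alpha_\ell\} \oplus \Span\{\alpha_{\ell+1},\beta_{\ell+1},\dots,\alpha_r,\beta_r\}$, and the restricted form on $P^\perp$ is the symplectic form $\bigoplus_{i>\ell} d_i J_2$ on the second summand together with the zero form on the radical $\Span\{\alpha_1,\dots,\alpha_\ell\}$ (which acts by the scalars $u(\alpha_i)$ on $W_u$). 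So $W_u$ is the irreducible representation of the quantum torus attached to $(P^\perp, \omega|_{P^\perp})$ with the given central character. Now $P'$ is an index-$2$ sublattice of $P^\perp$. I claim condition~\eqref{cond-even} is inherited: the restriction of $\omega$ to $P'$, in an appropriate basis, again has all its elementary divisors equal to powers of $2$ — this uses that the $d_i$ are powers of $2$ and that passing to an index-$2$ sublattice changes an elementary divisor $d$ only to $d$, $2d$, or $d/2$ (or splits off a factor), all still powers of $2$. Granting this, apply the dimension formula $D = n^{r}$ from the discussion preceding Lemma~\ref{lem-mon-eig}: the dimension of $W_u$ is $n^{r-\ell}$ (half the rank of the nondegenerate part of $P^\perp$), while each irreducible of $\qtorus_q(P')$ has dimension $\abs{P'/P'_n}^{1/2}$; one checks these differ by a factor equal to the number of summands, so a dimension count forces $W_u$ to split as a direct sum of irreducibles of $\qtorus_q(P')$ with all central characters $u'$ restricting to $s$ on $nP'$ and agreeing with $u$ on $P$, exactly as in Lemma~\ref{lem-sub-tori}. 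Concretely, one can realize $P'$ as spanned by the $\alpha_i$ together with $2\alpha_{i_0}$ (or $2\beta_{i_0}$, or $\alpha_{i_0}+\beta_{i_0}$) for one index $i_0$, in which case the operator $S^{d_{i_0}}$ (resp.\ $T$, resp.\ their Weyl product) appearing in~\eqref{eq-qtorus-rep} gets squared, and its $n$ distinct eigenspaces (recall $n$ is odd, so squaring is a bijection on $\mu_n$) group into the isotypic pieces.

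I expect the main obstacle to be verifying that condition~\eqref{cond-even} — "all elementary divisors are powers of $2$" — genuinely descends to the index-$2^m$ sublattice $P'$ for an \emph{arbitrary} such sublattice, not merely the coordinate ones. The cleanest route is to avoid elementary-divisor bookkeeping altogether: since $n$ is odd, $2$ is invertible mod $n$, so $P'_n = P' \cap L_n$ behaves well under index-$2$ extension, and one can argue directly that $\abs{P'/P'_n}$ is still a perfect power of $n^2$ and that the explicit construction~\eqref{eq-qtorus-rep} adapts verbatim with $S, T$ replaced by suitable powers. Everything else — the existence of the filtration, the dimension count, and the identification of central characters — is routine and parallels Lemma~\ref{lem-sub-tori} line by line.
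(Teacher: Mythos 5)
Your proposal assembles the right ingredients (oddness of $n$, invertibility of $2$ modulo $n$), but it never states or proves the actual content of the corollary, which is that each $W_u$ remains \emph{irreducible} as a representation of $\qtorus_q(P')$, so that the decomposition of Lemma~\ref{lem-sub-tori} does not refine when $P^\perp$ is replaced by the sublattice $P'$. Your stated goal, that each $W_u$ ``decomposes into irreducibles of $\qtorus_q(P')$, all of the same dimension and with the appropriate central characters,'' is strictly weaker: every finite-dimensional representation here is semisimple and all irreducibles of a quantum torus at a fixed root of unity already have the same dimension, so that assertion carries no information. The sentence ``one checks these differ by a factor equal to the number of summands'' is a tautology (the dimension of $W_u$ always equals the multiplicity times the common irreducible dimension), not a verification of anything; and the later phrase about eigenspaces ``grouping into the isotypic pieces'' suggests a nontrivial refinement of $W_u$, which is precisely what has to be ruled out. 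Your final paragraph hedges at exactly this crux, and the fact you propose to check there --- that $\abs{P'/P'_n}$ is ``a perfect power of $n^2$'' --- is still not enough: you need the exact value.

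The missing step, which is the entirety of the paper's (one-line) proof, is that the dimension of the irreducible representations does not change when passing from $P^\perp$ to $P'$. Concretely: since $[P^\perp:P']=2^m$ and $n$ is odd, $2^mP^\perp\subset P'$ with $2^m$ invertible mod $n$, so $P'_n=P'\cap(P^\perp)_n$ and $P'+(P^\perp)_n=P^\perp$, whence $P'/P'_n\cong P^\perp/(P^\perp)_n$ and $\abs{P'/P'_n}^{1/2}=\dim W_u$. Any irreducible $\qtorus_q(P')$-subrepresentation of $W_u$ therefore has dimension $\dim W_u$, forcing multiplicity one, i.e.\ each $W_u$ stays irreducible and the lemma holds verbatim. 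Once this is in place, your reduction to chains of index-$2$ sublattices, the worry about condition~\eqref{cond-even} descending to an arbitrary sublattice, and the coordinate realization of $P'$ (which in any case does not cover all index-$2$ sublattices relative to the fixed symplectic basis) are all unnecessary.
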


\begin{proof}
The dimension of irreducible representations does not change after passing to such
a sublattice.
\end{proof}


\section{Invariants of mapping classes}
\label{sec.invs}

\subsection{Surfaces and triangulations}
\label{sub.surf}

In this paper, we consider surfaces $\surface_{g,p}$ (abbreviated
by $\surface$) of genus $g$ with
a nonempty set $\punctures$ of $p$ punctures, with negative Euler characteristic
$\chi(\surface)=2-2g-p<0$. Such surfaces have an ideal triangulation $\lambda$
which is a set of edges that begin and end at the punctures such that each
connected component of $\surface \setminus \lambda$ is a triangle. It follows that
$\lambda$ consists of $-3\chi(\surface)$ edges and $\surface$ contains
$-2\chi(\surface)$ triangles.

There are two algebras, the Kashaev algebra and the Chekhov--Fock algebra
both being quantum tori, associated to such a surface and a triangulation.
In addition, there is a third $\gl_1$-algebra, also a quantum torus,
associated to the surface alone. We next introduce them, and discuss their relation. 

\subsection{The Kashaev algebra}
\label{sub.kashaev}

We present the algebra defined by Kashaev \cite{Kas} in stages. Write $F(\lambda)$
for the set of faces in the triangulation $\lambda$. For each face $t\in F(\lambda)$,
we assign a copy of $L_t=\ints^3/\ints\onevec$ equipped with a skew-symmetric form
induced by the form on $\ints^3$ with matrix
\begin{equation}
\begin{pmatrix}
0 & 1 & -1 \\
-1 & 0 & 1 \\
1 & -1 & 0
\end{pmatrix}.
\end{equation}
Here we use a bold-faced number to denote the vector whose components are all
that number.
The coordinate basis of $\ints^3$ are associated to the sides of $t$ in the
counterclockwise order. $L_t$ is isomorphic to $\ints^2$ with the standard symplectic
form, but we delay the choice of an isomorphism. Set
$L_\lambda=\bigoplus_{t\in F(\lambda)}L_t$ equipped with the direct sum form $\omega$.
The Kashaev algebra $\qtorus^\Kas_q(\lambda)$ is defined as the quantum torus
associated with $(L_\lambda,\omega)$.

\begin{remark}
The usual definition of the Kashaev algebra uses a decorated triangulation, which
numbers the faces and assigns a distinguished corner in each face by putting a dot.
In the face $t$, the edge opposite to the dot is eliminated so that we have an
isomorphism $L_t\cong\ints^2$ by $(x,y,z)\mapsto(y,-x)$. Then we get an isomorphism
\begin{equation}
L_\lambda=\bigoplus_{t\in F(\lambda)}L_t\cong\bigoplus_{i=1}^{\abs{F(\lambda)}}\ints^2
=\ints^{2\abs{F(\lambda)}},
\end{equation}
and the form $\omega$ is standard symplectic. The elementary move that change the
dots is an isomorphism between different Kashaev algebras in the usual definition,
whereas it is a different presentation of the same algebra in the definition of this
paper. The dots become important if we explicitly construct representations of the
Kashaev algebra using \eqref{eq-qtorus-rep}, which depends on the presentation of
the quantum torus.
\end{remark}

\subsection{The Chekhov--Fock algebra}
\label{sub.CF}

There is a skew-symmetric function $\varepsilon:\lambda\times\lambda\to\ints$
given by
\begin{equation}
\label{eq.Q}
\varepsilon_{ab} = \#\left( \facedef{$a$}{$b$} \right)
- \#\left( \facedef{$b$}{$a$} \right),
\end{equation}
where each shaded part is a corner of a face. We also use $\varepsilon$ to denote
the obvious extension as a bilinear form on $\ints^\lambda$. The Chekhov--Fock
algebra $\qtorus^\CF_q(\lambda)$ is the quantum torus associated to
$(\ints^\lambda,\varepsilon)$.
\begin{equation}
\qtorus^\CF_q(\lambda)=\cx\langle X_a^{\pm1},
a\in\lambda\rangle/\ideal{X_aX_b-q^{2\varepsilon_{ab}}X_bX_a}.
\end{equation}

For a puncture $v\in\punctures$, let $c_v\in\ints^\lambda$ be the vector that
counts how many times each edge ends on $v$. Then clearly
\begin{equation}
\sum_{v\in\punctures}c_v=\mathbf{2}.
\end{equation}

\begin{proposition}[{\cite[Proposition~5]{BL}}]
\label{prop-bl-pieces}
There exists a basis of $\ints^\lambda$ such that $\varepsilon$ is block-diagonal
with a $p$-dimensional block of $0$, $g$ blocks of $2J_2$, and $2g+p-3$ blocks of
$J_2$. The radical of $\varepsilon$, denoted $\ints^\lambda_\infty$ in the notation of
Section~\ref{sub.qtorus}, is spanned by $c_v$ defined above and $\onevec$. In
particular, $\varepsilon$ satisfies \eqref{cond-even}.
\end{proposition}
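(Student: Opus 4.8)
The plan is to establish Proposition~\ref{prop-bl-pieces} by explicitly decomposing the form $\varepsilon$ on $\ints^\lambda$ into its radical and a nondegenerate part, and then analyzing the nondegenerate part more carefully to locate the block with elementary divisor $2$. First I would identify the radical. Since $\varepsilon_{ab}$ counts corners of faces, a direct local computation shows that for any edge $b$, the sum $\sum_a \varepsilon_{ab}$ telescopes face by face (each triangle contributes the cyclic pattern $(1,1,1)$ up to sign around its corners), so $\onevec$ lies in the radical; similarly, pairing $\varepsilon$ with $c_v$ counts corners at the puncture $v$ and these cancel because each corner of a face at $v$ is counted once with each sign as one sweeps around $v$. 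Hence $\Span\{c_v : v\in\punctures\} + \ints\onevec \subseteq \ints^\lambda_\infty$. For the reverse inclusion, I would count dimensions: $\ints^\lambda$ has rank $-3\chi(\surface) = 6g-6+3p$, and the claimed radical has rank $p+1$ (the $c_v$ satisfy the single relation $\sum_v c_v = \mathbf 2$, and one should check $\onevec$ is not in their span, e.g.\ because some edge is not a loop at any one puncture, or by a parity/independence argument), so the nondegenerate quotient has rank $6g-6+3p-(p+1) = 6g-7+2p$, which must be even — wait, it is $6g-7+2p$, so I should double-check: the blocks listed are $g$ blocks of size $2$ (the $2J_2$ factors, each rank $2$), $2g+p-3$ blocks of size $2$ (the $J_2$ factors), plus the $p$-dimensional zero block, totalling $2g + 2(2g+p-3) + p = 6g + 3p - 6$, matching rank of $\ints^\lambda$, and the radical claimed is $p$-dimensional inside that zero block? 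I need to reconcile $p$ versus $p+1$: the $p$-dimensional zero block is the radical $\ints^\lambda_\infty$ as a \emph{direct summand basis}, and it is spanned by the $c_v$ together with $\onevec$ modulo the relation $\sum c_v = \mathbf 2$, giving exactly $p$ independent vectors. So the nondegenerate part has rank $6g+2p-6 = 2(3g+p-3)$, even, as it must be.

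The substantive step is then pinning down the \emph{elementary divisors} of the nondegenerate part, i.e.\ showing there are exactly $g$ invariant factors equal to $2$ and the remaining $2g+p-3$ equal to $1$. The cleanest route is to cite or reprove the structure of the Chekhov--Fock form via a known model. I would pick a convenient ideal triangulation — for instance, build $\surface_{g,p}$ from a polygon with side identifications, or use an explicit "brick" decomposition — and compute $\varepsilon$ on that triangulation. Alternatively, and more conceptually, I would use the Poincar\'e--Lefschetz duality interpretation: $\varepsilon$ modulo its radical is the intersection form on $H_1(\surface, \punctures)$ or a closely related homology group, and the elementary divisor $2$'s arise from the $g$ handles (where a loop around a handle meets its dual once in the closed surface but the decomposition into edge-classes picks up a doubling), while the puncture-related directions contribute the ordinary symplectic $J_2$'s. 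Concretely: over $\rats$, $\varepsilon$ is equivalent to the standard symplectic form of rank $6g+2p-6$, so all that is needed is the 2-adic (or general integral) normal form, and for that I would exhibit a sublattice where $\varepsilon$ restricts to $2 \cdot (\text{unimodular})$ on a rank-$2g$ piece and is unimodular on a complementary rank-$(2g+2p-6)$ — hmm, $2g+2g+2p-6 \ne 6g+2p-6$ unless — let me recount: $g$ blocks of $2J_2$ have rank $2g$, and $2g+p-3$ blocks of $J_2$ have rank $4g+2p-6$, summing to $6g+2p-6$, good.

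The main obstacle is the integral (as opposed to rational) normal form: showing the elementary divisors are precisely $2^g \cdot 1^{2g+p-3}$ rather than, say, involving other powers of $2$ or odd primes. My plan is to reduce this to a model computation on one well-chosen triangulation per topological type $(g,p)$, since the elementary divisors of $\varepsilon / \ints^\lambda_\infty$ are a topological invariant independent of the triangulation (any two triangulations are connected by flips, and a flip is a change of basis of $\ints^\lambda$ up to stabilization, which I would need to verify preserves the form's Smith normal form on the nondegenerate quotient). For the model I would use the standard "once-punctured genus $g$ with extra punctures" triangulation: start with the once-punctured genus $g$ surface triangulated by $4g-2$ triangles (a classical construction), where the Chekhov--Fock form's 2-adic structure is well documented and yields exactly $g$ factors of $2$ (one per handle), then add the remaining $p-1$ punctures by inserting punctured monogons/bigons, each of which adds a $J_2$ block and one more zero direction without introducing new $2$'s. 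Assembling these pieces and checking the block counts match $g$, $2g+p-3$, and $p$ completes the proof; I expect the bookkeeping in the puncture-addition step, and verifying flip-invariance of the integral normal form, to be the fiddly parts, but no single step should require deep new ideas beyond careful linear algebra over $\ints$.
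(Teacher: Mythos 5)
A preliminary remark on the comparison you were asked to be judged against: the paper does not prove this proposition at all --- it is imported verbatim as \cite[Proposition~5]{BL} --- so the only meaningful evaluation is of your argument on its own terms (and implicitly against Bonahon--Liu's explicit computation). Your overall architecture is reasonable and most of its routine parts do check out: $\onevec$ and the $c_v$ lie in the radical by the local count you describe; the integral congruence class of $(\ints^\lambda,\varepsilon)$ is indeed a flip invariant, since the lattice map of a flip (the tropical mutation \eqref{eq-lattice-flip}) is unimodular and intertwines the two forms, and flips connect all ideal triangulations of $\surface_{g,p}$; and the puncture-addition step genuinely works --- the two new edges other than the one completing $c_{v_{\mathrm{new}}}$ span a unimodular $J_2$ block, so the lattice splits orthogonally, and projecting the old edge vectors into the orthogonal complement reproduces exactly the old form together with the single new radical direction.

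The genuine gap is that the step carrying the actual content of the proposition is not proved. The base case --- that for the once-punctured genus-$g$ surface the nondegenerate quotient has exactly $g$ elementary divisors equal to $2$ (equivalently, discriminant group $(\ints/2\ints)^{2g}$), with all the rest unimodular --- is asserted as ``well documented,'' but the natural reference for it is \cite[Proposition~5]{BL} itself, so invoking it is circular; and carrying it out directly requires an explicit basis construction on a concrete triangulation with $4g-2$ triangles, which is precisely the nontrivial bookkeeping you have deferred and which is comparable to Bonahon--Liu's own proof. The alternative ``conceptual'' route you gesture at (identifying $\varepsilon$ modulo its radical with an intersection form and locating the factors of $2$ at the handles) is the right heuristic, but as written (``picks up a doubling'') it is not an argument. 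Two smaller issues: your dimension count only shows that the span of $\{c_v\}\cup\{\onevec\}$ has finite index in the radical, whereas the proposition asserts integral spanning, so saturation must also be checked (it can be folded into the model computation, using that the flip maps send $c_v\mapsto c_v$ and $\onevec\mapsto\onevec$); and the initial miscount of the radical rank ($p+1$ versus $p$), though self-corrected, shows the bookkeeping here needs more care than the sketch provides.
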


As we will see momentarily, the central element $X^\onevec$ is trivial when
considered together with the Kashaev algebra. We define the restricted Chekhov--Fock
algebra as the quotient
\begin{equation}
\label{CFr}  
\qtorus^\CFr_q(\lambda)=\qtorus^\CF_q(\lambda)/\ideal{X^\onevec-1}.
\end{equation}
It is the quantum torus based on the lattice $R_\lambda=\ints^\lambda/\ints\onevec$.

If the triangulation has self-folded faces, then it is sometimes convenient to
switch to a different generating set. All generators not on the folding edge are
kept the same. If a self-folded face has sides $i,i,b$, then the generator $X_i$ is
replaced with $X_iX_b$. With this modification, the Chekhov--Fock algebra becomes
part of a cluster algebra structure. See e.g.\ \cite{FG:q}.

\subsection{Relation between Kashaev and Chekhov--Fock algebras}
\label{sec-subalgs}

There are two important embedded subalgebras of the Kashaev algebra. The first one
is based on the homology group $H=H_1(\surface,\BZ)$ with twice the intersection form,
which satisfies \eqref{cond-even}. The embedding
\begin{equation}
i_H:H\embed L_\lambda
\end{equation}
sends a homology class $h\in H$ to the following construction. Represent $h$ as an
oriented simple multicurve on $\surface$ in minimal position with respect to the
triangulation $\lambda$. Each segment around a corner of a face is assigned the
generator on the side opposite to the corner. Then $i_H(h)$ is the signed sum of
these elements, where the signs are determined by whether the segments are clockwise
or counterclockwise. It follows from \cite{Kas} that this map is a well-defined
embedding and respects the forms.

The second one is the restricted Chekhov--Fock algebra. The map
\begin{equation}
i_\CFr:R_\lambda\embed L_\lambda
\end{equation}
sending each edge to the sum of the two sides in the adjacent faces is compatible
with the forms on the lattices essentially by definition. Clearly,
$\onevec\in\ints^\lambda$ is sent to $0\in L_\lambda$, so $i_\CFr$ is well-defined.
This is an embedding as we will see below.

Let $P_\lambda=\ints^\punctures/\ints\onevec$. It naturally embeds in both $R_\lambda$
and $H$. Specifically, $v\in P_\lambda$ is identified with $c_v\in R_\lambda$ and a
small circle $\gamma_v$ around $v$ in $H$. The two embeddings of $P_\lambda$
coincide in $L_\lambda$ after composition with $i_\CFr$ and $i_H$ respectively.
On the level of algebras, the group algebra $\cx[P_\lambda]$ embeds in the center
of both $\qtorus^\CFr_q(\lambda)$ and $\qtorus_q(H)$.

\begin{proposition}
\label{prop.3embed}
The lattice maps $i_H$ and $i_\CFr$ combine to induce an embedding of lattices
\begin{equation}
\label{latt-embed}
i:\left(R_\lambda\oplus H\right)/\ideal{c_v=\gamma_v}\embed L_\lambda
\end{equation}
and an associated embedding of quantum tori
\begin{equation}
\label{3embed}
i_q:\qtorus^\CFr_q(\lambda)\otimes_{\cx[P_\lambda]}\qtorus_q(H)
\embed\qtorus^\Kas_q(\lambda).
\end{equation}
\end{proposition}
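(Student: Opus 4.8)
The plan is to establish the lattice-level statement~\eqref{latt-embed} first, and then deduce the algebra statement~\eqref{3embed} as a formal consequence of the machinery for monomial maps between quantum tori from Section~\ref{sub.qtorus}. First I would record that $i_\CFr$ and $i_H$ are each injective and each pulls back $\omega$ to the given form on $R_\lambda$ (respectively on $H$): for $i_H$ this is the content of the cited result from~\cite{Kas}, and for $i_\CFr$ compatibility with the forms holds ``essentially by definition'' as remarked above, while injectivity will be checked by a rank count or a direct argument using the combinatorics of the gluing. Since the two embeddings of $P_\lambda$ agree after composing with $i_\CFr$ and $i_H$, the universal property of the pushout gives a well-defined lattice map $i$ out of $(R_\lambda\oplus H)/\langle c_v=\gamma_v\rangle$, and $f^\ast\omega$ on the source is the direct-sum form glued along $P_\lambda$, i.e.\ exactly the form carried by the quantum torus $\qtorus^\CFr_q(\lambda)\otimes_{\cx[P_\lambda]}\qtorus_q(H)$ (on $P_\lambda$ the form vanishes, so the gluing is consistent).

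The remaining point for the lattice statement is that $i$ is itself injective. I would argue this by a dimension count together with an analysis of the kernel. The source lattice has rank $\rank R_\lambda+\rank H-\rank P_\lambda=(-3\chi(\surface)-1)+2g-(p-1)=(6g-3+3p-1)+2g-p+1=8g-4+2p$ after substituting $\chi=2-2g-p$; wait---let me instead phrase it as: $R_\lambda$ has rank $|\lambda|-1=-3\chi-1$, $H$ has rank $2g$, $P_\lambda$ has rank $p-1$, so the pushout has rank $(-3\chi-1)+2g-(p-1)=-3\chi+2g-p=6g-3+3p+2g-p=8g-3+2p$; hmm, whereas $L_\lambda=\bigoplus_t L_t$ has rank $2|F(\lambda)|=2(-2\chi)=-4\chi=8g-4+4p$. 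So the image is a sublattice of corank $(8g-4+4p)-(8g-3+2p)=2p-1$; I should instead just cite the ``corank $p-1$'' claim from the introduction (the arithmetic above suggests I have miscounted $\rank R_\lambda$, which should be $-3\chi-1+1=-3\chi$ once the relation $X^{\onevec}\mapsto 0$ is already imposed, or rather $R_\lambda=\ints^\lambda/\ints\onevec$ has rank $-3\chi-1$ but the relevant quotient by the radical is smaller)---in any case the clean way is: the pushout embeds because a class in the kernel would have to lie in $i_\CFr(R_\lambda)\cap i_H(H)$ inside $L_\lambda$ with matching $P_\lambda$-component, and one shows this intersection is exactly the common copy of $P_\lambda$ using the explicit descriptions of the two subalgebras (edges vs.\ multicurves on corners); this is the main obstacle and where I would spend the bulk of the effort.

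For the passage from lattices to algebras, once $i\colon (R_\lambda\oplus H)/\langle c_v=\gamma_v\rangle\embed L_\lambda$ satisfies $i^\ast\omega=\omega'$, the general construction from Section~\ref{sub.qtorus} (the monomial map $f_{\hat s\hat s'}$ with trivial rescaling) furnishes an algebra homomorphism $i_q$ between the associated quantum tori, and it is injective because a monomial map of quantum tori is injective exactly when the underlying lattice map is injective (the images of basis monomials are linearly independent monomials in $\qtorus^\Kas_q(\lambda)$). The only thing to verify is that the source quantum torus $\qtorus_q(L')$ for $L'=(R_\lambda\oplus H)/\langle c_v=\gamma_v\rangle$ is canonically identified with $\qtorus^\CFr_q(\lambda)\otimes_{\cx[P_\lambda]}\qtorus_q(H)$; this is immediate since tensoring quantum tori over the group algebra of a common central sublattice corresponds precisely to forming the pushout of lattices along that sublattice (with the form vanishing there), so the two descriptions of $\qtorus_q(L')$ coincide. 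This completes the proof modulo the injectivity of $i$ discussed above.
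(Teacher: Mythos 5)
Your overall architecture is sound and matches the paper's: prove the lattice statement, then get the algebra embedding formally, since a monomial map of quantum tori is injective exactly when the underlying lattice map is, and the tensor product over $\cx[P_\lambda]$ is precisely the quantum torus of the pushout lattice. But the heart of the proposition --- injectivity of $i$ --- is exactly the step you defer (``this is the main obstacle and where I would spend the bulk of the effort''), so as written the proposal has a genuine gap rather than a proof. The paper closes this gap with a specific mechanism: the images of $i_\CFr$ and $i_H$ are \emph{orthogonal} with respect to $\omega$ on $L_\lambda$ (a simple calculation, cited from Teschner), and each image contains a subgroup complementary to $P_\lambda$ on which the restriction of $\omega$ is nondegenerate (for $R_\lambda$ this is Proposition~\ref{prop-bl-pieces}, whose radical is spanned by the $c_v$ and $\onevec$; for $H$ the radical of the intersection form is spanned by the peripheral classes $\gamma_v$). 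Orthogonality plus nondegeneracy forces these two complements to meet trivially and to meet $P_\lambda$ trivially, and then a rank count gives injectivity of $i$ (and, in passing, of $i_\CFr$, which you were also proposing to check separately). Your alternative plan --- identifying $i_\CFr(R_\lambda)\cap i_H(H)$ with the common copy of $P_\lambda$ by explicit combinatorics of edges versus corner-sums of multicurves --- could in principle work, but you have not carried it out, and note that injectivity of the pushout map needs slightly more than the intersection statement alone (matching of the $P_\lambda$-components and injectivity of each factor map).

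Moreover, the rank bookkeeping you attempt is wrong in a way that would sink your ``dimension count'' strategy: you take $\rank H=2g$, but $H=H_1(\surface,\ints)$ is the homology of the \emph{punctured} surface, of rank $2g+p-1$. With the correct ranks, the source $(R_\lambda\oplus H)/\ideal{c_v=\gamma_v}$ has rank $(6g+3p-7)+(2g+p-1)-(p-1)=8g+3p-7$, while $\rank L_\lambda=2\abs{F(\lambda)}=8g+4p-8$, so the corank of the image is $p-1$, as stated in the introduction; your candidate answers ($2p-1$, etc.) are inconsistent with this. Fixing the rank of $H$ and then running the orthogonality-plus-nondegeneracy argument above is the shortest way to turn your outline into the paper's actual proof.
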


\begin{proof}
It is known (e.g.\ \cite[Lemma~4]{Te:qTeich}) that the images of $i_\CFr$ and $i_H$
are orthogonal with respect to the symplectic form $\omega$ on $L_\lambda$ by a
simple calculation. Note each image contains a subgroup complementary to $P_\lambda$
such that the restriction of the form of $L_\lambda$ is non-degenerate. Then a simple
counting of ranks shows that $i$ is an embedding.
\end{proof}

We will omit the embeddings $i_H,i_\CFr,i,i_q$ if it is clear from context.

\begin{lemma}
\label{lem-sub-Kas}
$P_\lambda$ is a direct summand of $L_\lambda$, and $R_\lambda+H$ is a sublattice
of $P_\lambda^\perp$ of index $2^{2g}$.
\end{lemma}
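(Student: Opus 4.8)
The natural plan is to treat the two assertions separately: (a) that $i(P_\lambda)$ is a \emph{primitive} sublattice of $L_\lambda$ (equivalently, $L_\lambda/i(P_\lambda)$ is torsion free), and (b) that $[P_\lambda^\perp:i_\CFr(R_\lambda)+i_H(H)]=2^{2g}$. Assertion~(a) is really a prerequisite for (b): since each $L_t\cong\ints^2$ carries the standard symplectic form, $(L_\lambda,\omega)$ is unimodular, and once $i(P_\lambda)$ is a rank $p-1$ primitive isotropic sublattice, $P_\lambda^\perp$ contains it, has the complementary rank $-4\chi(\surface)-(p-1)=8g+3p-7$, and the quotient $P_\lambda^\perp/P_\lambda$ inherits a \emph{unimodular} symplectic form (symplectic reduction of a unimodular lattice along an isotropic direct summand).

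Granting (a), I would prove (b) by a discriminant count. Write $N=i_\CFr(R_\lambda)+i_H(H)$ and $Q=P_\lambda^\perp$. By Proposition~\ref{prop.3embed} the map $i$ is injective, so $i_\CFr(R_\lambda)\cap i_H(H)=i(P_\lambda)$ and $\mathrm{rank}(N)=\mathrm{rank}(R_\lambda)+\mathrm{rank}(H)-\mathrm{rank}(P_\lambda)=(6g+3p-7)+(2g+p-1)-(p-1)=8g+3p-7=\mathrm{rank}(Q)$. Also $N\subseteq Q$: as $i_\CFr,i_H$ preserve the forms, $\omega(i(c_v),i_\CFr(r))=\varepsilon(c_v,r)=0$ and $\omega(i(\gamma_v),i_H(h))=2\langle\gamma_v,h\rangle=0$, since the $c_v$ span the radical of $\varepsilon$ on $\ints^\lambda$ (Proposition~\ref{prop-bl-pieces}) and the $\gamma_v$ span the radical of the intersection form on $H$ (using $\sum_v\gamma_v=0$), while the two images are mutually orthogonal. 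Hence $N$ has finite index in $Q$, and since both contain $P_\lambda$, $[Q:N]=[Q/P_\lambda:N/P_\lambda]$. Injectivity of $i$ also gives, as a lattice with form, $N/i(P_\lambda)\cong(R_\lambda/P_\lambda,\bar\varepsilon)\oplus(H/P_\lambda,2\langle-,-\rangle)$ as an orthogonal direct sum; by Proposition~\ref{prop-bl-pieces} the first summand is block diagonal with $g$ blocks $2J_2$ and $2g+p-3$ blocks $J_2$, discriminant $4^g$, while $H/P_\lambda\cong H_1(\surclose;\ints)\cong\ints^{2g}$ with twice the unimodular intersection form, discriminant $4^g$. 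So $\mathrm{disc}(N/P_\lambda)=4^{2g}=2^{4g}$, whereas $\mathrm{disc}(Q/P_\lambda)=1$; therefore $[Q/P_\lambda:N/P_\lambda]^2=2^{4g}$, i.e.\ $[Q:N]=2^{2g}$.

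For (a): because $\omega$ is unimodular on $L_\lambda$, the sublattice $i(P_\lambda)$ is a direct summand if and only if the pairing map $L_\lambda\to\Hom(i(P_\lambda),\ints)$, $\xi\mapsto\omega(\xi,-)|_{i(P_\lambda)}$, is onto. I would identify this map geometrically. On the compact model $\bar\surface$ of $\surface$, sending, for each triangle $t$ of $\lambda$, the generator of $L_t$ labelled by a side of $t$ to the relative class of that arc defines a homomorphism $j\colon L_\lambda\to H_1(\bar\surface,\partial\bar\surface;\ints)$ — well defined because the three sides of a triangle sum to its boundary (null in relative homology) — and $j$ is surjective, its image containing the classes of the edge-arcs of $\lambda$, which generate $H_1(\bar\surface,\partial\bar\surface)$. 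Comparing the combinatorial recipes for $i_H$ and $j$ (this is the computation behind the fact that $i_H$ preserves the intersection form, \cite{Kas}) shows that, up to sign, the pairing map above is the composite
\[
  L_\lambda\ \xrightarrow{\ j\ }\ H_1(\bar\surface,\partial\bar\surface;\ints)\ \xrightarrow[\ \mathrm{Lefschetz}\ ]{\ \sim\ }\ \Hom(H_1(\surface;\ints),\ints)\ \xrightarrow{\ \mathrm{restr.}\ }\ \Hom(P_\lambda,\ints),
\]
where I identify $i(P_\lambda)$ with $i_H(P_\lambda)$ and $P_\lambda$ with the peripheral subgroup of $H=H_1(\surface)$. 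The first map is onto, the second is (perfect) Lefschetz duality, and the third is onto because $P_\lambda$ is a direct summand of $H_1(\surface)$ (the quotient $H_1(\surface)/P_\lambda\cong H_1(\surclose)\cong\ints^{2g}$ is free); hence the composite is onto. (Alternatively, one can reduce (a) to the primitivity of $i_\CFr(R_\lambda)$ in $L_\lambda$ — a finite check on triangle–edge incidences — since $i(P_\lambda)$ is the radical of $\omega|_{i_\CFr(R_\lambda)}=\varepsilon$, hence the intersection of the two primitive sublattices $i_\CFr(R_\lambda)$ and $i_\CFr(R_\lambda)^\perp$, which is primitive.)

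The step I expect to be the genuine obstacle is this comparison of recipes in (a): identifying $\omega(i_H(h),e_\sigma)$ with the signed intersection number of $h$ and the arc $\sigma$ is precisely what makes $i_H$ form-preserving, but getting every sign and normalization right (in particular that the proportionality constant is $1$, not $2$) requires taking the counterclockwise conventions in each $L_t$ seriously; here I would lean on the explicit descriptions of $i_H$ and $i_\CFr$ in \cite{Kas} and \cite{Te:qTeich}. Granting (a), part (b) is routine arithmetic from Propositions~\ref{prop-bl-pieces} and \ref{prop.3embed}.
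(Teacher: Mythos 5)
Your argument is correct, but for the first assertion it takes a genuinely different route from the paper. The paper handles the direct-summand claim by a short explicit construction: choosing a maximal tree in $\lambda$ rooted at one puncture, it forms for each remaining puncture $v$ an element $b_v\in L_\lambda$ (the sum of the left sides along the tree path to $v$) and checks $\omega(b_v,c_w)=\delta_{vw}$; then $x\mapsto\sum_v\omega(b_v,x)c_v$ is a retraction onto $P_\lambda$. You instead deduce surjectivity of the pairing $L_\lambda\to\Hom(P_\lambda,\ints)$ from the surjection $j$ onto $H_1(\overline{\surface},\partial\overline{\surface};\ints)$, Lefschetz duality, and primitivity of the peripheral subgroup in $H_1(\surface)$. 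The step you flag but do not carry out --- that $\omega(\xi,i_H(h))$ equals, up to one global sign, the intersection number of $j(\xi)$ with $h$, with proportionality constant $1$ rather than $2$ --- is in fact true and is a one-triangle check: for a side $a$ of a face $t$ with sides $a,b,c$ counterclockwise, only the segments of $h$ cutting the two corners adjacent to $a$ contribute, each giving $\pm\omega(\chi_a,\chi_b)=\pm1$ or $\pm\omega(\chi_a,\chi_c)=\mp1$ on the left and a single signed crossing of $a$ on the right, with matching signs (exits through $a$ count $+1$, entries $-1$), and every crossing of $h$ with $a$ arises from exactly one segment of $h\cap t$; the factor $2$ only appears in $j\circ i_H$, consistent with $\omega$ doubling the intersection form on $H$. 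So your route works; what it buys is a conceptual explanation (primitivity of $P_\lambda$ in $L_\lambda$ mirrors primitivity of the peripheral subgroup in homology), whereas the paper's maximal-tree construction is shorter, avoids any duality input, and produces explicit symplectic duals $b_v$, which is convenient later when one wants symplectic bases of $L_\lambda$ containing a basis of $P_\lambda$ (as in the proof of Lemma~\ref{lem-key}). Your parenthetical alternative (primitivity of $i_\CFr(R_\lambda)$) would also work but is not quite the advertised ``finite check'' and would need its own argument. For the index $2^{2g}$, your discriminant computation is essentially the paper's intended ``rank count plus block decompositions'' of Proposition~\ref{prop-bl-pieces}, made precise via unimodularity of $L_\lambda$ and of the reduction $P_\lambda^\perp/P_\lambda$, so there the two arguments agree in substance.
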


\begin{proof}
To see the first part, we can construct a symplectic dual to $P_\lambda$ using
a maximal tree in $\lambda$. Choose $p-1$ punctures as generators of $P_\lambda$
and the remaining one as the base point of the maximal tree. There is a path from
the base point to each generator $v$, and form the sum of the left sides of the
path as an element of $b_v\in L_\lambda$. It is easy to check that
$\omega(b_v,c_w)=\delta_{vw}$.

The second part follows easily from a rank count and the block decompositions
of $\varepsilon$ and the intersection form on $H$.
\end{proof}

\subsection{Surface characters and central characters}
\label{sub.chars}

We consider decorated and framed character variety of $\surface$. For details,
see \cite{FG:mod}. 

Let $\FGA$ denote the decorated $\SL_2(\BC)$-character variety of $\surface$. 
Points in this space are decorated characters $(r,h)$ specified by a boundary
parabolic homomorphism
$r:\pi_1(\surface)\to\SL_2(\cx)$ and additional data $h$ related to punctures.
It is a complexified version of Penner's decorated Teichm\"uller space \cite{Pen}.

Given a peripheral subgroup $C\subset\pi_1(\surface)$, $r(C)$ has a unique fixed
point on $\partial_\infty\AMSbb{H}^3$. Given an ideal arc between puncture $a$ and
(possibly the same) puncture $b$, any peripheral subgroup $C_a$ around $a$
corresponds to a peripheral subgroup $C_b$ around $b$. We say $r$ is generic if for
any essential simple ideal arc, the fixed points associated to the ends are distinct.
Foe example, by \cite[Lemma~34]{BL}, $r$ is generic if $r$ is injective.

Given a triangulation $\lambda$, decorated generic characters can be parameterized
using Ptolemy coordinates $l_e$, $e\in\lambda$, which are complexified Penner's
length coordinates. Define the Kashaev coordinate of a side $a$ in the face with
sides $a,b,c$ in counterclockwise order as $l_b/l_c$. Thus, a decorated generic
character defines a homomorphism $\sigma:L_\lambda\to\cx^\times$. 
Summarizing, we have
\begin{equation}
\label{rhs}
(r,h) \in \FGA^\mathrm{generic} \mapsto \sigma \in \Hom(L_\lambda,\cx^\times) \,.
\end{equation}

Now consider the sub-tori. The homology part is trivial.

\begin{proposition}[Part of {\cite[Proposition~5]{Kas}}]\label{prop-Hchar}
The pullback $i_H^\ast\sigma\in\Hom(H,\cx^\times)$ is the trivial character.
\end{proposition}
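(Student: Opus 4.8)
The plan is to trace through the definition of $\sigma$ restricted to the image of $i_H$ and show it is always $1$ on the generators constructed from a simple closed multicurve. The key is that $\sigma$ is a \emph{monoidal}/multiplicative functional built from the Kashaev coordinates $l_b/l_c$, while $i_H(h)$ is a signed sum of side-generators; applying $\sigma$ (equivalently, applying the group homomorphism $L_\lambda\to\cx^\times$) turns this signed sum into a product of Ptolemy-coordinate ratios, and the claim is that this product telescopes to $1$.

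Concretely, I would first reduce to the case where $h$ is represented by a single oriented simple closed curve $\gamma$ in minimal position with respect to $\lambda$ (the general multicurve case follows since $i_H$ is a homomorphism and $\sigma$ is multiplicative). Such a $\gamma$ is cut by $\lambda$ into a cyclic sequence of arcs, each arc sitting inside one triangle and cutting off one corner of that triangle. By the definition of $i_H$, each such corner-arc contributes (with a sign $\pm1$ according to clockwise/counterclockwise) the generator of $L_\lambda$ attached to the side \emph{opposite} that corner. Then $\sigma$ evaluated on that generator is the Kashaev coordinate of that side, i.e.\ a ratio $l_b/l_c$ of the Ptolemy coordinates of the two \emph{adjacent} sides (the two sides that bound the cut-off corner). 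The heart of the argument is the bookkeeping: as we follow $\gamma$ from one triangle to the next, the arc exits across an edge $e$ of $\lambda$ and re-enters the neighboring triangle across the same edge $e$; I need to check that the factor $l_e$ appearing in the numerator from one corner-arc cancels the factor $l_e$ appearing in the denominator from the next corner-arc, with the sign conventions matching up so the cancellation is genuine rather than doubling. Going all the way around the closed curve $\gamma$, every edge-crossing is shared by exactly two consecutive corner-arcs, so every $l_e$ that appears does so once in a numerator and once in a denominator; the total product collapses to $1$.

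The main obstacle I expect is getting the sign conventions exactly right: the signs in the definition of $i_H$ (clockwise vs.\ counterclockwise segments) and the asymmetry of the Kashaev coordinate ($l_b/l_c$ depends on the counterclockwise ordering $a,b,c$ of the sides of the triangle) must be shown to conspire so that consecutive ratios share a reciprocal factor rather than the same factor. I would handle this by fixing the orientation of $\gamma$, working out explicitly in one triangle what generator and what sign a left-to-right versus right-to-left corner-arc produces, and then observing that crossing an edge $e$ reverses the ``left/right'' role of $e$ relative to the two triangles it separates—which is precisely the reversal needed. A cleaner alternative, if the bookkeeping gets heavy, is to invoke the known compatibility: under the embedding $i_H\colon H\embed L_\lambda$ the pullback of the Kashaev pre-symplectic data recovers the intersection form (this is the ``respects the forms'' statement already cited from \cite{Kas}), and one checks directly on the explicit formula that the associated character $\sigma\circ i_H$ is trivial because, homologically, $\gamma$ bounds no net ``flux'' of Ptolemy coordinates—this is essentially the content of the cited \cite[Proposition~5]{Kas}, so in the write-up I may simply reproduce that computation rather than reprove it from scratch.
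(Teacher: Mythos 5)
Your proposal is correct. Note that the paper itself gives no argument here: the statement is quoted as part of \cite[Proposition~5]{Kas}, so your telescoping computation is essentially a direct verification of the cited result rather than a divergence from the paper. The verification does go through, and the sign worry you flag resolves itself automatically: for a segment cutting off the corner opposite the side $a$ of a face with sides $a,b,c$ in counterclockwise order, reversing the direction of the segment flips both the sign of the contributed generator $\chi_a$ and the roles of the two crossed edges $b,c$, while $\sigma(\chi_a)=l_b/l_c$ is fixed; hence every segment contributes $l_{\mathrm{exit}}/l_{\mathrm{enter}}$ (or uniformly the reciprocal, depending on which orientation convention carries the $+$ sign), and around each closed component of the multicurve the exit edge of one segment is the entry edge of the next, so the product collapses to $1$. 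The reduction to single components is also harmless, since the multicurve representing $h$ is a disjoint union of oriented simple closed curves and $\sigma$ is multiplicative over the corresponding sum in $L_\lambda$; well-definedness of $i_H$ itself (independence of the representative) is already taken from \cite{Kas} in the paper, so you need not reprove it.
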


By abuse of notations, we denote the pullback
$i_\CFr^\ast\sigma\in\Hom(R_\lambda,\cx^\times)$ also by $\sigma$. Then $\sigma$
describes generic and boundary parabolic points in the framed character variety
$\FGX$ using shear-bend coordinates. These points are determined by the projection of
$r$ to $\PSL_2(\cx)$ and are independent of the decorations. The generic condition
implies that shear-bend coordinates are never $-1$.

Since $(R_\lambda)_n=nR_\lambda+P_\lambda$, a central character
$s:(R_\lambda)_n\to\cx^\times$ of the restricted Chekhov--Fock algebra can be
determined by $\sigma$ and puncture weights $\zeta_v,v\in\punctures$ such that
\begin{equation}
\label{eq-weight-rel}
\prod_{v\in\punctures}\zeta_v=1,\qquad
\zeta_v^n=1,
\end{equation}
The puncture weights specify a map $P_\lambda\to\cx^\times$ compatible with $\sigma$.

\subsection{Coordinate change isomorphisms}
\label{sec-coord}

Traditionally, the coordinate change isomorphisms are defined by the same formulas
for all $q$ and without reduction, but in exchange, it is necessary to pass to some
localization. We also need to control the denominators for representations to exist,
which is nontrivial. To avoid such complication, we use reduced quantum tori, where
the inverses already exist.

Let $(r,h)$ be a decorated generic character. Given two triangulations
$\lambda,\lambda'$, there is a coordinate change isomorphism of algebras 
\begin{equation}
\label{theta}  
\Theta^{\lambda\lambda'}_q:\rdtorus^\Kas_{q,\sigma'}(\lambda')
\to\rdtorus^\Kas_{q,\sigma}(\lambda)
\end{equation}
where $\sigma:L_\lambda\to\cx^\times,\sigma':L_{\lambda'}\to\cx^\times$ are
coordinates of $(r,h)$ for the respective triangulations as in \eqref{rhs}.

It suffices to give the definition where $\lambda'=\lambda\setminus\{e\}\cup\{e'\}$
is a flip of $\lambda$. A general coordinate change is the composition of flips. The
local picture of a flip is given in Figure~\ref{fig-flip}. In this case,
$\Theta^{\lambda\lambda'}_q(x'_i)=x_i$ if $i$ is not in the local picture, and
\begin{equation}
\label{eq-flip-def}
\Theta^{\lambda\lambda'}_q(x'_i)=
\begin{cases}
x_i(1+qX_e),& i=a,c,\\
[x_iX_e](1+q^{-1}X_e)^{-1},& i=b,d,\\
x_bx_c,& i=e'_1,\\
x_ax_d,& i=e'_2.
\end{cases}
\end{equation}
Here, a prime is used to distinguish the generators of $\rdtorus^\Kas_{q,\sigma'}
(\lambda')$ from $\rdtorus^\Kas_{q,\sigma}(\lambda)$, and $X_e=x_{e_1}x_{e_2}$ is the
image of the generator from the Chekhov--Fock algebra. Note the cases $i=e'_1,e'_2$
are redundant.

These formulas are equivalent to those in \cite{Kas} without reduction. The fact
that they are still well-defined with reduction boils down to a simple calculation
showing the compatibility with the Frobenius homomorphism, that is,
\begin{equation}
\Theta^{\lambda\lambda'}_q\circ\Phi_q=\Phi_q\circ\Theta^{\lambda\lambda'}_1,
\end{equation}
and that $\Theta^{\lambda\lambda'}_1$ describes the classical coordinate change
relation between $\sigma$ and $\sigma'$.
As mentioned before, $1+q^{-1}X_e$ is invertible in the reduced quantum torus since
$t_e=\sigma(\chi_e)$ is the shear-bend coordinate of $e$, which is not $-1$ by
assumption.

\begin{figure}
\centering
\begin{tikzpicture}[baseline=(ref.base),inner sep=2pt]
\tikzmath{\s=0.1cm;\r=2;}
\begin{scope}[xshift=-\s,yshift=-\s]
\draw (0,0) -- node[above]{$b$} (\r,0) -- node[below left,inner sep=0pt]{$e_1$} (0,\r)
-- node[right]{$a$} cycle;
\end{scope}
\begin{scope}[xshift=\s,yshift=\s]
  \draw (\r,0) -- node[left]{$c$} (\r,\r) -- node[below]{$d$} (0,\r)
  -- node[above right,inner sep=0pt]{$e_2$} cycle;
\end{scope}
\tikzbase{1,1};
\end{tikzpicture}
\quad
$\longrightarrow$
\quad
\begin{tikzpicture}[baseline=(ref.base),inner sep=2pt]
\tikzmath{\s=0.1cm;\r=2;}
\begin{scope}[xshift=-\s,yshift=\s]
  \draw (0,0) -- node[above left,inner sep=0pt]{$e'_1$} (\r,\r)
  -- node[below]{$d$} (0,\r) -- node[right]{$a$} cycle;
\end{scope}
\begin{scope}[xshift=\s,yshift=-\s]
  \draw (\r,0) -- node[left]{$c$} (\r,\r)
  -- node[below right,inner sep=0pt]{$e'_2$} (0,0) -- node[above]{$b$} cycle;
\end{scope}
\tikzbase{1,1};
\end{tikzpicture}
\caption{Flip in the Kashaev algebra}
\label{fig-flip}
\end{figure}
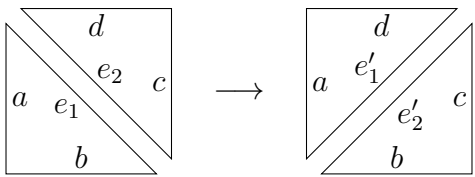

The coordinate change can be pulled back to the Chekhov--Fock algebra and the
homology quantum torus. The pullback to the homology part is just the identity. On
the Chekhov--Fock algebra, the pullback agrees with the standard formula of cluster
mutation. If there are no self-folded faces, then the formula is
\begin{equation}
\label{thetaq}
\Theta^{\lambda\lambda'}_q(X'_i)=
\begin{cases}
X_e^{-1},&i=e',\\
\displaystyle X_i\prod_{j=1}^{\abs{\varepsilon_{ie}}}(1+q^{2j-1}X_e),&
i\ne e',\,\,\varepsilon_{ie}\le0,\\
\displaystyle [X_iX_e^{\varepsilon_{ie}}]\prod_{j=1}^{\abs{\varepsilon_{ie}}}
(1+q^{1-2j}X_e)^{-1},&
i\ne e',\,\,\varepsilon_{ie}>0 \,.
\end{cases}
\end{equation}
If there are self-folded faces, we need to use the modified generating set as
described before.

\subsection{The BB and BLWY mapping class invariants}
\label{sub.inv}

In this section we define the BB and the BLWY invariants in operator and in
numerical form. We discuss the BB invariant first.

We fix a decorated generic character $(r,h)$ invariant under a mapping class
$\varphi$. Given a triangulation $\lambda$, let $\rho_\lambda:
\rdtorus^\Kas_{q,\sigma}(\lambda)\to\End(W)$ be the irreducible representation with
reduction $\sigma:L_\lambda\to\cx^\times$ determined by the decorated character
$(r,h)$ as in \eqref{rhs}.

There is an obvious lattice isomorphism $\varphi_\ast:L_\lambda\to L_{\varphi(\lambda)}$.
Then
\begin{equation}
\varphi_\ast\sigma:L_{\varphi(\lambda)}\cong L_\lambda\to\cx^\times
\end{equation}
corresponds to the same decorated character by the invariance. The composition
\begin{equation}
\label{rhoA}
\rdtorus^\Kas_{q,\sigma}(\lambda)\stackrel{\varphi_\ast}{\longrightarrow}
\rdtorus^\Kas_{q,\varphi_\ast\sigma} (\varphi(\lambda))
\stackrel{\Theta^{\lambda\varphi(\lambda)}_q}{\longrightarrow}
\rdtorus^\Kas_{q,\sigma}(\lambda)
\end{equation}
is an automorphism of $\rdtorus^\Kas_{q,\sigma}(\lambda)$. Therefore, we have another
representation $\rho_\lambda^\varphi=\rho_\lambda\circ
\Theta^{\lambda\varphi(\lambda)}_q\circ\varphi_\ast$ of the algebra
$\rdtorus^\Kas_{q,\sigma}(\lambda)$, and it is isomorphic to $\rho_\lambda$, so there
exists an intertwiner $A\in\GL(W)$ so that
\begin{equation}
\label{eq-inter-def}
\rho_\lambda(x)=A^{-1}\cdot\rho_\lambda^\varphi(x)\cdot A
\qquad\text{for }x\in\rdtorus^\Kas_{q,\sigma}(\lambda),
\end{equation}
unique up to scaling.

\begin{lemma}
The conjugacy and scaling equivalence class of the intertwiner does not depend on 
\begin{enumerate}
\item the choice of $\rho_\lambda$ within the isomorphism class,
\item the choice of $\lambda$, or
\item the decoration $h$.
\end{enumerate}
\end{lemma}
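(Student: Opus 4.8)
The plan is to deduce all three statements from a single mechanism: each of the replacements in (1)--(3) is realized by an isomorphism of algebras between two instances of the construction \eqref{rhoA} that is compatible with the mapping-class automorphism $\Theta^{\lambda\varphi(\lambda)}_q\circ\varphi_\ast$, so that the uniqueness of the intertwiner \eqref{eq-inter-def} (Schur's lemma) forces the conjugacy-and-scaling classes to agree. For (1), write $\rho_\lambda'=B\,\rho_\lambda(\,\cdot\,)B^{-1}$ for an isomorphism $B$. Since $\Theta^{\lambda\varphi(\lambda)}_q$ and $\varphi_\ast$ are algebra maps applied \emph{inside} the representation, $(\rho_\lambda')^\varphi=B\,\rho_\lambda^\varphi(\,\cdot\,)B^{-1}$; hence $BAB^{-1}$ intertwines $(\rho_\lambda')^\varphi$ with $\rho_\lambda'$, and by uniqueness up to a scalar every intertwiner for $\rho_\lambda'$ lies in $\cx^\times\cdot BAB^{-1}$. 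Thus the class is merely conjugated by $B$ and rescaled; in particular, for (2) and (3) it suffices to compute the intertwiner class using any convenient representative of the relevant isomorphism class.

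For (2), let $\lambda'$ be a second triangulation, with coordinates $\sigma'$ of $(r,h)$, and set $\widetilde\rho:=\rho_\lambda\circ\Theta^{\lambda\lambda'}_q$, an irreducible representation of $\rdtorus^\Kas_{q,\sigma'}(\lambda')$ on $W$. Since $\Theta^{\lambda\lambda'}_q$ is by construction a map of \emph{reduced} algebras, $\widetilde\rho$ has central character $\sigma'$, and as the Kashaev form is symplectic a representation is determined by $\sigma'$ alone; hence $\widetilde\rho\cong\rho_{\lambda'}$, and by (1) it suffices to compute the $\lambda'$-intertwiner class using $\widetilde\rho$. The coordinate changes obey the cocycle identity $\Theta^{\lambda\lambda''}_q=\Theta^{\lambda\lambda'}_q\circ\Theta^{\lambda'\lambda''}_q$ and are equivariant under mapping classes, $\varphi_\ast\circ\Theta^{\lambda\lambda'}_q=\Theta^{\varphi(\lambda)\varphi(\lambda')}_q\circ\varphi_\ast$ — both because the flip formulas \eqref{eq-flip-def} are local and combinatorial and compatible with the Frobenius homomorphism. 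Using these, one computes
\begin{align*}
\widetilde\rho^{\varphi}
&=\rho_\lambda\circ\Theta^{\lambda\lambda'}_q\circ\Theta^{\lambda'\varphi(\lambda')}_q\circ\varphi_\ast
=\rho_\lambda\circ\Theta^{\lambda\varphi(\lambda')}_q\circ\varphi_\ast \\
&=\rho_\lambda\circ\Theta^{\lambda\varphi(\lambda)}_q\circ\varphi_\ast\circ\Theta^{\lambda\lambda'}_q
=\rho_\lambda^\varphi\circ\Theta^{\lambda\lambda'}_q .
\end{align*}
Hence the relation $\widetilde\rho(y)=\widetilde A^{-1}\widetilde\rho^{\varphi}(y)\widetilde A$ defining an intertwiner $\widetilde A$ of $\widetilde\rho$ becomes, after precomposing with the surjection $\Theta^{\lambda\lambda'}_q$, exactly $\rho_\lambda(x)=\widetilde A^{-1}\rho_\lambda^\varphi(x)\widetilde A$; that is, $\widetilde A$ is an intertwiner for $\rho_\lambda$, so $\widetilde A\in\cx^\times\cdot A$. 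Since any two triangulations are joined by a sequence of flips, (2) follows.

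Statement (3) is proved by the same device. Fix $\lambda$ and a second decoration $h'$ for which $(r,h')$ is again $\varphi$-invariant, and let $\sigma'$ be the resulting Kashaev coordinates. Changing the decoration only rescales Penner's length coordinates puncture by puncture, so $\sigma'=\sigma\cdot c$ for a character $c:L_\lambda\to\cx^\times$, and the passage $h\to h'$ is implemented by the associated rescaled monomial isomorphism $\mu^{hh'}:\rdtorus^\Kas_{q,\sigma'}(\lambda)\to\rdtorus^\Kas_{q,\sigma}(\lambda)$ with identity underlying lattice map. A direct check with \eqref{eq-flip-def}, using the decoration-independence of the shear-bend coordinates recorded in Section~\ref{sub.chars}, shows that $\mu^{hh'}$ commutes with the flips $\Theta^{\lambda\lambda'}_q$; and it commutes with $\varphi_\ast$ because both $\sigma$ and $\sigma'$ are $\varphi$-invariant. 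Running the computation of (2) verbatim with $\mu^{hh'}$ in place of $\Theta^{\lambda\lambda'}_q$ gives $(\rho_\lambda\circ\mu^{hh'})^\varphi=\rho_\lambda^\varphi\circ\mu^{hh'}$, and Schur's lemma again identifies the two intertwiner classes.

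The point requiring the most care is the naturality package for the reduced coordinate changes — the cocycle identity and the mapping-class equivariance $\varphi_\ast\circ\Theta^{\lambda\lambda'}_q=\Theta^{\varphi(\lambda)\varphi(\lambda')}_q\circ\varphi_\ast$ — together with the compatibility of the decoration-change rescaling $\mu^{hh'}$ with the flips in part (3); the latter also involves a little bookkeeping of $n$-th roots of unity in choosing the scalars, but it is controlled by the decoration-independence of the shear coordinates. Once these compatibilities are in place, the lemma is a formal consequence of Schur's lemma.
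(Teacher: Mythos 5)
Your proposal is correct and follows essentially the same route as the paper: part (1) by conjugating the intertwiner, part (2) by choosing $\rho_{\lambda'}=\rho_\lambda\circ\Theta^{\lambda\lambda'}_q$ and checking $\rho_{\lambda'}^{\varphi}=\rho_\lambda^{\varphi}\circ\Theta^{\lambda\lambda'}_q$ via the cocycle and $\varphi_\ast$-equivariance of the coordinate changes, and part (3) by implementing the decoration change as a rescaled monomial automorphism (the paper's $f_t$, which fixes the Chekhov--Fock generators and is compatible with \eqref{eq-flip-def}) and repeating the argument of (2). You merely spell out the "easy to check" steps that the paper leaves implicit.
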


\begin{proof}
(1) is true essentially by definition. For (2), if $\lambda'$ is another
triangulation, then we can choose
$\rho_{\lambda'}=\rho_\lambda\circ\Theta^{\lambda\lambda'}_q$. It is easy to check
that $\rho_{\lambda'}^\varphi=\rho_\lambda^\varphi\circ\Theta^{\lambda\lambda'}_q$,
so the same intertwiner works.

(3) Any two decorations are related by a $(\cx^\times)^p$-scaling action. The induced
action on the Ptolemy coordinates is
\begin{equation}
(t_1,\dotsc,t_p)\cdot l_a=t_i t_j l_a,
\end{equation}
where $i,j$ are the two ends of the arc $a$. There is a further action on Kashaev's
coordinates since each of them is the ratio of two Ptolemy coordinates. Explicitly, if
$a$ is a side of a face such that the counterclockwise orientation of $a$ goes from
puncture $i$ to puncture $j$, then
\begin{equation}
(t\cdot\sigma)(\chi_a)=(t_j/t_i)\sigma(\chi_a).
\end{equation}
This action is covered by an action on the reduced Kashaev algebras
\begin{equation}
f_t:\rdtorus^\Kas_{q,\sigma}(\lambda)\to\rdtorus^\Kas_{q,t\cdot\sigma}(\lambda),
\qquad
f_t(x_a)=(t_j^{1/n}/t_i^{1/n})x_a.
\end{equation}
Note the Chekhov--Fock generators $X_e$ are invariant under this action. Observe that
the action $f_t$ is compatible with coordinate changes \eqref{eq-flip-def}. Then the
same argument as (2) shows that the intertwiner is independent of $h$.
\end{proof}

We now have all the ingredients to define the BB invariants.

\begin{definition}
\label{def.K}
Fix a generic character $r$ that admits a decoration $h$ such that $(r,h)$ is
invariant under a mapping class $\varphi$.
\begin{enumcc}[(a)]
\item We define
\begin{equation}
\label{AKdef}
A^\Kas_{\varphi,r}(q) \in \GL(W)
\end{equation}
to be the intertwiner in Equation~\eqref{eq-inter-def}, well-defined up to
conjugation and scalar multiplication.
\item We define $T^\Kas_{\varphi,r}(q)$ to be the trace of $A^\Kas_{\varphi,r}(q)$
after scaling so that the determinant is $1$. In other words,
\begin{equation}
\label{TKdef}
T^\Kas_{\varphi,r}(q)=\tr A^\Kas_{\varphi,r}(q)/\left(\det
  A^\Kas_{\varphi,r}(q)\right)^{1/n^{4g-4+2p}} \in \BC/\mu_{n^{4g-4+2p}} \,.
\end{equation}
\end{enumcc}
\end{definition}

For brevity, we often omit some data in the notation $A^\Kas_{\varphi,r}(q)$ if it is
clear from context. We will do the same for the other operator invariants defined
later.

Let us discuss some history and how the invariants defined above agree with the
Baseilhac--Benedetti invariants.

\begin{remark}
\label{rem.BB}  
\begin{enumcc}[1.]
\item
  Baseilhac--Benedetti (in short, BB) \cite{BB:fiber} considered so-called local
  representations of the Chekhov--Fock algebras. Local representations are a
  class of representations introduced by Bai--Bonahon--Liu \cite{Bai}. They are
  reducible, but better behaved under cut-and-paste. Due to the reducibility, BB
  cannot rely on Schur's lemma and gave explicit constructions of operators
  instead.
\item
  Local intertwiners are highly non-unique. Mazzoli \cite{Maz} constructed a
  subset of local intertwiners that form an $H_1(\surface,\ints/n\ints)$-torsor.
  BB's construction chooses one element in this subset and argues that it is
  canonical by 3-dimensional reasoning. Here, we effectively give a different
  reason using the Kashaev algebra.
\item
  Ishibashi \cite{Ishi} gave a formula for $A^\Kas_{\varphi,r}(q)$ using
  slightly different language, where the Kashaev algebra is called the Weyl
  algebra. Ishibashi further made the identification of the operator
  $A^\Kas_{\varphi,r}(q)$ with the reduced BB invariant. The difference between
  the reduced and the full BB invariants is the symmetry defect of
  \cite{BB:defect}. However, the symmetry defect associated to the layered
  triangulation of the mapping torus $M_\varphi$ is easily calculated to be $1$.
\item
  The trace $T^\Kas_{\varphi,r}(q)$ agrees with the earlier 3-dimensional BB
  invariant \cite{BB:analytic} of the mapping torus $M_\varphi$ by construction
  \cite{BB:fiber}. Note the 3-dimensional BB invariant depends on some choices of
  homology classes on $M_\varphi$, and the fibering implicitly chooses them.
\end{enumcc}
\end{remark}

The next remark is about the role of the decoration $h$.

\begin{remark}
An additional restriction that comes from the Kashaev algebra is that $r$ must admit
a decoration $h$, even though the invariant does not depend on it. In particular, $r$
must be boundary parabolic. There is a way to adjust the definitions here to work
with other generic characters, or one can use the construction of BB, which does not
require decorations to begin with. However, the generalization of our main
Theorem~\ref{thm.1} becomes more technical. We choose to omit the generalization for
clarity.
\end{remark}

The next remark is about the operator versus the numerical invariants of $\varphi$.

\begin{remark}
\label{rem.powers}
Note that 
\begin{equation}
A^\Kas_{\varphi,r}(q)^m = A^\Kas_{\varphi^m,r}(q), \qquad \text{for all} \,\, m \in \BZ \,.
\end{equation}
Hence, the characteristic polynomial of $A^\Kas_{\varphi,r}$ is determined by
$T^\Kas_{\varphi^m,r}$ for all integers $m$.
\end{remark}

This completes the definition of the BB invariants. We now discuss the BLWY invariants,
applying the same construction to the Chekhov--Fock algebra. Recall that a
Chekhov--Fock central character can be determined by $r$ and puncture weights $\zeta_v$
satisfying \eqref{eq-weight-rel}. Assuming these data are $\varphi$-invariant, then
the same process prior to Definition~\ref{def.K} produces the next definition.

\begin{definition}
\label{def.CF}
Fix a generic character $r$ that admits a decoration $h$ and puncture weights
$\zeta_v$ satisfying \eqref{eq-weight-rel}. Assume that $(r,h)$ is
invariant under a mapping class $\varphi$.
\begin{enumcc}[(a)]
\item We define
\begin{equation}
\label{ACFdef}
A^\CF_{\varphi,r,\zeta_v}(q) \in \GL(W)
\end{equation}
to be the intertwiner in the analog of Equation~\eqref{eq-inter-def} for the
Chekhov--Fock algebra, well-defined up to conjugation and scalar multiplication.
\item Taking the trace, we define
\begin{equation}
\label{TCFdef}
T^\CF_{\varphi,r,\zeta_v}(q)=
\tr A^\CF_{\varphi,r,\zeta_v}(q)/\left(\det A^\CF_{\varphi,r,\zeta_v}(q)\right)^{1/n^{3g-3+p}}
\in \BC/\mu_{n^{3g-3+p}}.
\end{equation}
\end{enumcc}
\end{definition}

This invariant was originally considered by \cite{BL}, which agrees with
\cite{BWY:I} under further generic conditions.

The next remark involves the root of unity ambiguity in the definition of
the numerical invariants from Equations~\eqref{TKdef} and \eqref{TCFdef}.

\begin{remark}
\label{rem.phase}
As mentioned in Remark~\ref{rem.BB}, BB defines operators by explicit formulas. The
advantage of such construction is that it reduces the phase ambiguity of the BB
operators (and hence of the corresponding numerical invariants) to $\mu_n$. Note
\cite{BB:fiber} only claims $\mu_{4n}$, but it cannot have a $\mu_4$ ambiguity
because of the determinant condition.

Similarly, the construction in \cite{Ishi} is given by explicit operators,
well-defined up to $\mu_{6n}$, which is also the ambiguity of the trace. However, the
determinant is not normalized to exactly $1$ there.

The apparent ambiguity of the definitions~\eqref{TKdef} and \eqref{TCFdef} are very
high. Since $T^\Kas_{\varphi,r}(q)$ is equal to BB invariant, there is a way to
reduce the ambiguity in~\eqref{TKdef} to an $n$-th root of unity. We expect the same
to be true for $T^\CF_{\varphi,r,\zeta_v}(q)$, possibly changing the normalization of
the determinant. Compare Remark~\ref{rem.gl1phase}.

The normalization by $\det=1$ is the simplest way to resolve the projective ambiguity
of the intertwiner. However, it may not be the natural one. This is evidenced by the
relation between $T^\CF_{\varphi,r,\zeta_v}(q)$ and the 1-loop invariants \cite{DG2}.
In \cite{GY}, we found that both the absolute value and the phase of
$T^\CF_{\varphi,r,\zeta_v}(q)$ needs to be adjusted to obtain a reasonable asymptotic
expansion confirming the quantum modularity conjecture.
\end{remark}

\subsection{The $\gl_1$-mapping class invariants}
\label{sec-qt-inter}

Finally, we consider invariants defined by representations of the quantum torus
$\qtorus_q(H)$ and identify them with the $\gl_1$-mapping class invariants. Recall
that $H=H_1(\surface,\BZ)$. 

By Proposition~\ref{prop-Hchar}, we only need to consider the reduced quantum torus
$\rdtorus_{q,1}(H)$ by the trivial character $H\to 1$. Then a central character of
$\rdtorus_{q,1}(H)$ is specified by puncture weights $\zeta_v$ only. Recall that the
product of $\zeta_v$ is required to be 1.

Number the punctures of $\surface$ by $1,\dotsc,p$, and write $\zeta_i=q^{\ell_i}$.
Let $\rho:\rdtorus_{q,1}(H)\to\End(W)$ be the irreducible representation constructed
by \eqref{eq-qtorus-rep} with puncture weights as above. We modify the representation
space of $\rho$ slightly to
\begin{equation}
W^H_{\zeta_v}=V^{\otimes g}\otimes(V_{\ell_1}\otimes\dotsm\otimes V_{\ell_p})
\end{equation}
to indicate the puncture weights. Correspondingly, the representation space of
$\rho\circ\varphi_\ast$ is $W^H_{\varphi(\zeta_v)}$. We are looking for an
intertwiner $A\in\Hom(W^H_{\zeta_v},W^H_{\varphi(\zeta_v)})$ such that
\begin{equation}
\label{eq-TQFT-inter}
\rho(x)=A^{-1}\cdot\rho(\varphi_\ast x)\cdot A,
\qquad x\in\rdtorus_{q,1}(H).
\end{equation}
A slight extension of \cite{Go:gl1,MOO:gl1} shows that the
$\gl_1$-Reshetikhin--Turaev operator invariant \cite{RT} for the mapping cylinder
\begin{equation}\label{eq-cylinder}
C_\varphi=(\surface\times[0,1]\sqcup\surface\times\{\ast\})/(a,0)\sim(f(a),\ast)
\end{equation}
is the intertwiner we want. We recall the definition below.

Let $G_0$ be the ribbon graph with $g$ handles and $p$ loose ends as shown in
Figure~\ref{fig-Vgp}, and $G_1$ be the mirror image. The loose ends of $G_i$
corresponds to the punctures of the surface. The boundary of a neighborhood of $G_i$
is identified with the surface $\surface$, where the $\alpha$ curves are the
meridians of the handles by convention.

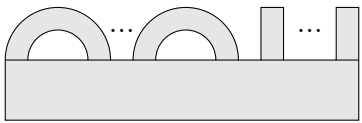
\begin{figure}
\centering
\begin{tikzpicture}
\tikzmath{\w=0.3;\d=0.8;\r1=\d/2;\r2=\r1+\w;\h=0.8;}
\draw[fill=gray!20,even odd rule] (0,0) -- (0,\h)
  arc[radius=\r2,start angle=180,end angle=0] -- ++(\w,0)
  arc[radius=\r2,start angle=180,end angle=0] -- ++(\w,0)
  -- +(0,\r2) -| ++(\w,0) -- ++(\r2,0) |- ++(\w,\r2) |- cycle
  (\w,\h) arc[radius=\r1,start angle=180,end angle=0] -- cycle
  (\d+4*\w,\h) arc[radius=\r1,start angle=180,end angle=0] -- cycle;
\foreach \x in {0,\w+\d,3*\w+\d,4*\w+2*\d,6*\w+2*\d,7*\w+2*\d+\r2}
\draw (\x,\h) -- +(\w,0);
\foreach \x in {2.5*\w+\d, 7*\w+2*\d+\r2/2}
\path (\x,\h+\r1)node{...};
\end{tikzpicture}
\caption{Ribbon graph $G_0$}
\label{fig-Vgp}
\end{figure}

Let $G$ be the union $G_0\cup G_1$ where the loose ends are connected according to
the action of $\varphi$. Then $C_\varphi$ can be represented by a diagram $L\sqcup
G\subset S^3$ where $L$ is a framed link used for surgery, and a small neighborhood
of $G$ is removed to produce $\partial C_\varphi$. As an example, the identity
mapping class is represented by Figure~\ref{fig-id-surg} where all curves have
blackboard framing. A Dehn twist is given by an extra $\pm1$-surgery on the core of
the twist. In general, there is a surgery diagram given by changing the braiding of
the punctures and inserting $\pm1$-framed curves (Dehn twists) near $G_0$ since they
generate the mapping class group. Therefore, we can assume that the entire diagram
lies in $\AMSbb{R}^2\times[0,1]$ such that the only parts touching the boundary are
the top and bottom coupons.

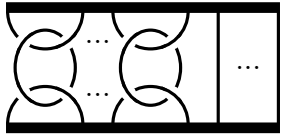
\begin{figure}
\centering
\begin{tikzpicture}[very thick]
\tikzmath{\w=0.4;\r=0.5;\h=1.5;\l=4*\r+4*\w;\a=2*\r+\w/2;}
\begin{knot}[background color=white]
\strand (0,0) arc[radius=\r,start angle=180,end angle=0]
  ++(\w,0) arc[radius=\r,start angle=180,end angle=0];
\strand (0,\h) arc[radius=\r,start angle=-180,end angle=0]
  ++(\w,0) arc[radius=\r,start angle=-180,end angle=0];
  \strand (\r,\h/2) circle[x radius=0.8*\r,y radius=\r] (3*\r+\w,\h/2)
  circle[x radius=0.8*\r,y radius=\r];
\flipcrossings{1,3,5,7}
\end{knot}
\foreach \x in {\l,\l-2*\w} \draw (\x,0) -- +(0,\h);
\draw[fill] (0,0) rectangle (\l,-0.1) (0,\h) rectangle (\l,\h+0.1);
\path (\a,0.8*\r)node{...} (\a,\h-0.8*\r)node{...} (\l-\w,\h/2)node{...};
\end{tikzpicture}
\caption{Surgery diagram for the identity mapping class}\label{fig-id-surg}
\end{figure}

The braided Hopf algebra used for this RT invariant is $\cx[K]/\ideal{K^n-1}$ with
braiding $R=\frac{1}{n}\sum_{i,j=0}^{n-1}q^{ij}K^i\otimes K^j$.
The irreducible representations of this algebra are identified with $V_\ell$ where
$K$ acts as $q^\ell$.
Then by construction, $W^H_{\zeta_v}=V^{\otimes
g}\otimes(V_{\ell_1}\otimes\dotsm\otimes V_{\ell_p})$ is the TQFT space of
$\surface\times\{1\}$ with the punctures colored by $\ell_1,\dotsc,\ell_p$, and
similarly, $W^H_{\varphi(\zeta_v)}$ is the TQFT space of $\surface\times\{\ast\}$.

\cite[Section~8]{MOO:gl1} gives an explicit formula for the RT operator invariant of
$C_\varphi$ when the surface is closed. We can generalize to the punctured case as
follows. A coloring of $L\sqcup G$ is an assignment of numbers in $\ints/n\ints$ to
each component of $L$ and each ribbon of $G$. They are assembled into a vector
$c=(k,h_0,h_1,\ell_1,\dots,\ell_p)$, where $k,h_0,h_1$ are the colors of $L,G_0,G_1$
respectively.

\begin{definition}\label{def-gl1}
Let $Q$ be the linking matrix of $L\sqcup G$ with the rows and columns ordered
according to the entries of coloring vectors $c$. Define
\begin{equation}\label{eq-AH-def}
A^H_{\varphi,\zeta_v}(q):e_{h_1}\mapsto\frac{\delta(q)^{-\sigma(Q)}}{\sqrt{n^{g+\#L}}}
\sum_{h_0\in(\ints/n\ints)^g}\bigg(\sum_{k\in(\ints/n\ints)^{\#L}}q^{c^tQc}\bigg) e_{h_0}.
\end{equation}
Here, $\delta(q)=\frac{1}{\sqrt{n}}\sum_{i\in\ints/n\ints}q^{i^2}$ is the phase of
the Gauss sum, $\sigma$ denotes the signature of the symmetric matrix $Q$,
$e_{h_1}\in W^H_{\zeta_v}$ and $e_{h_0}\in W^H_{\varphi(\zeta_v)}$ are standard basis
vectors. Note the puncture colors $\ell_1,\dotsc,\ell_p$ are not summed.

As before, we define
\begin{equation}
\label{THdef}
T^H_{\varphi,\zeta_v}(q)=\tr A^H_{\varphi,\zeta_v}(q)/
\left(\det A^H_{\varphi,\zeta_v}(q)\right)^{1/n^g}\in\cx/\mu_{n^g}.
\end{equation}
\end{definition}

\begin{remark}
Since the ribbon graph gives a framing to the punctures, technically we are
considering the mapping class group with framed punctures, which differs from the
usual one by the Dehn twists around the punctures. This only changes the framings of
the puncture braids, which changes $A^H_{\varphi,\zeta_v}(q)$ by $\mu_n$.

For the rest of the section, we work with framed lifts of the mapping class,
which implies the result for unframed ones with an extra $\mu_n$ ambiguity.
\end{remark}

\begin{theorem}
\label{thm-gl1-RT}
Suppose all mapping class are framed as in the remark above.
\begin{enumerate}
\item
  $A^H_{\varphi,\zeta_v}(q)$ is well-defined (with no root of unity ambiguity).
\item
  $A^H_{\varphi\circ\psi,\zeta_v}(q)=A^H_{\varphi,\psi(\zeta_v)}(q)\circ A^H_{\psi,\zeta_v}(q)$
  up to $\mu_4$.
\item
  In the standard bases of $W^H_{\zeta_v},W^H_{\varphi(\zeta_v)}$, we have
  $\det\left(A^H_{\varphi,\zeta_v}(q)\right) \in \mu_{12}$ if $g\ge1$.
\item
  $A^H_{\varphi,\zeta_v}(q)$ is an intertwiner of $\rho$ and $\rho\circ\varphi_\ast$.
\end{enumerate}
\end{theorem}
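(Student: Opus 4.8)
The plan is to verify the four assertions in the order listed, extracting each from the explicit formula \eqref{eq-AH-def} together with the surgery description of $C_\varphi$. For (1), the only potential source of ambiguity in $A^H_{\varphi,\zeta_v}(q)$ is the surgery presentation $L\sqcup G\subset S^3$ of the mapping cylinder, so I would check invariance under the Kirby/Fenn--Rourke moves on the link $L$ (the part of the diagram not meeting $G$). The signature correction $\delta(q)^{-\sigma(Q)}$ and the normalization $n^{-\#L/2}$ are precisely the terms that make the $\gl_1$ (i.e.\ $U(1)$) RT invariant insensitive to blow-ups and handle-slides: adding a $\pm1$-framed unknot changes $\sigma(Q)$ by $\pm1$ and multiplies the Gauss sum over the new color by $\delta(q)^{\pm1}\sqrt n$, which cancels; a handle-slide is a change of basis in the $k$-variables preserving $c^tQc\bmod n$. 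Since this is the standard computation in \cite{MOO:gl1,Go:gl1} adapted to loose ends, I would cite it and indicate the modification: the loose-end colors $\ell_1,\dots,\ell_p$ and the $G_1$-colors $h_1$ are held fixed and not summed, so the moves only touch the block of $Q$ indexed by $k$ and $h_0$, and the argument goes through verbatim.

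For (2) (functoriality up to $\mu_4$), the mapping cylinders compose: $C_{\varphi\circ\psi}$ is obtained by stacking $C_\varphi$ on top of $C_\psi$ and gluing along the middle copy of $\surface$, which on the level of surgery diagrams means concatenating the two diagrams and identifying the $G_1$ of $C_\psi$ with the $G_0$ of $C_\varphi$. The RT functor sends this gluing to composition of operators, so $A^H_{\varphi\circ\psi}$ equals $A^H_\varphi\circ A^H_\psi$ up to the framing-anomaly factor, which for the $U(1)$ theory is a power of $\delta(q)$, hence lies in $\mu_4$ (or one tracks it exactly as $\delta(q)^{\sigma_1+\sigma_2-\sigma_{12}}$ via Wall nonadditivity and observes $\delta(q)^4=1$ when $n$ is odd). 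For (3) (determinant in $\mu_{12}$ when $g\ge1$), I would use (2) to reduce to generators of the framed mapping class group — braidings of the loose ends and $\pm1$ Dehn twists near $G_0$ — and compute $\det A^H$ on each. For a single Dehn twist the operator is, up to the global phase $\delta(q)^{-\sigma(Q)}$, a diagonal matrix of the form $e_h\mapsto q^{Q_0(h)}e_h$ for a quadratic form $Q_0$ on $(\ints/n\ints)^g$, whose determinant is a Gauss sum and so lies in $\mu_{12}$ (being $\delta(q)^{\pm\text{something}}$ up to sign, and $n$ odd); braid generators permute basis vectors up to scalars that are $n$-th roots of unity to a power divisible enough to land in $\mu_{12}$. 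When $g=1$ the claim says the determinant avoids the full $\mu_n$ ambiguity; one has to be a little careful to separate the genuine $\mu_{12}$ phase from any $\mu_n$ phase coming from the framing of the punctures, which is why the preceding remark restricts to framed mapping classes.

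For (4) — that $A^H_{\varphi,\zeta_v}(q)$ actually intertwines $\rho$ and $\rho\circ\varphi_\ast$ — I would argue TQFT-theoretically. The curves $\alpha_i,\beta_i$ on $\surface$ act on the TQFT space $W^H_{\zeta_v}$ by the $\gl_1$ Verlinde/Wilson-line operators, and by the identification $W^H_{\zeta_v}=V^{\otimes g}\otimes(V_{\ell_1}\otimes\cdots\otimes V_{\ell_p})$ with $\rho$ as in \eqref{eq-qtorus-rep} this action is exactly $\rho$ restricted to the corresponding generators $x^{\alpha_i},x^{\beta_i}$ of $\rdtorus_{q,1}(H)$ (the handle meridians/longitudes), while the peripheral loops act by the scalars $\zeta_v$, matching $\rho(x^{\gamma_v})=\hat s(\gamma_v)$. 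Gluing in $C_\varphi$ then identifies $A^H$ as the map induced by the cobordism, which by naturality of the RT functor sends a curve on the incoming boundary to its $\varphi$-image on the outgoing boundary; rewriting this as $\rho(x)\,A^H=A^H\,\rho(\varphi_\ast x)$ on generators and then on all of $\rdtorus_{q,1}(H)$ gives \eqref{eq-TQFT-inter}. Concretely, one can instead verify the intertwining relation directly from \eqref{eq-AH-def}: $\rho(x^{\alpha_i})$ and $\rho(x^{\beta_i})$ are the shift and clock operators $S^{d_i},T$ on the $i$-th $V$-factor, and one checks that conjugating the Gaussian sum kernel $q^{c^tQc}$ by these shifts reproduces the corresponding shift on the output index after the change of variables encoded by $\varphi_\ast$ acting on $H_1$.

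\medskip

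The main obstacle I anticipate is item (4): making the dictionary between the link-surgery formula \eqref{eq-AH-def} and the quantum-torus representation \eqref{eq-qtorus-rep} fully precise — in particular identifying which handle curve corresponds to which tensor factor and with what sign/framing convention — so that the intertwining identity holds \emph{on the nose} rather than merely up to a scalar. The isotropy and linking-number bookkeeping (that the $\alpha$-curves are the handle meridians and that their pairwise linking in $L\sqcup G$ reproduces the intersection form $2J_2^{\oplus g}$ on $H$) is the crux; once it is set up correctly, items (1)–(3) are the standard $U(1)$-Reshetikhin--Turaev computations applied to a diagram with loose ends and carried out with care about the odd order $n$.
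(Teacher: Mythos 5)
Your proposal is correct and takes essentially the same route as the paper: (1) and (2) are the \cite{MOO:gl1} arguments (Kirby invariance of \eqref{eq-AH-def} and composition of surgery presentations, with the Gauss-sum phase contributing only $\mu_4$ since $n$ is odd) adapted to diagrams with loose ends whose colors are held fixed, and (3)--(4) are reduced via (2) to generators of the framed mapping class group (the handle generators of \cite{MOO:gl1} plus the puncture braids and the extra Dehn twists of Figure~\ref{fig-mcg-gens}) and verified by explicit computation. The only differences are cosmetic: for (4) the paper also checks generators case by case rather than invoking RT naturality of curve operators (your concrete fallback is exactly that computation), and in (3) the determinant of a diagonal twist block $\diag(q^{\,dj^2})$ is the power $q^{\,d\sum_j j^2}$ rather than a Gauss sum --- it still lies in $\mu_{12}$ for odd $n$, so the conclusion is unaffected.
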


Note if $g=0$, then $W,W'$ are 1-dimensional, and $A_\varphi$ is a power of $q$.

\begin{proof}
(1) and (2) are straightforward generalizations of Theorem~1.3 and Proposition~8.1
of \cite{MOO:gl1}, except we assume that $n$ is odd so that the phase of the Gauss
sum is a 4th root instead of an 8th root.

For (3) and (4), it suffices to check the generators of the mapping class group by
(2). Most generators are given in \cite[Section~8]{MOO:gl1}, so we just need to add
the Dehn twists given in Figure~\ref{fig-mcg-gens} and braids of the punctures. Here,
only the modification compared to the identity class in Figure~\ref{fig-id-surg} is
shown, where the framing of the blue curve is $\pm1$. The results follow from a case
by case calculation.
\end{proof}

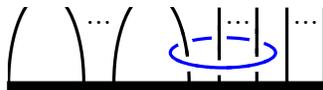
\begin{figure}
\centering
\begin{tikzpicture}[very thick]
\tikzmath{\w=0.4;\r=0.5;\h=1;\b=4*\r+2*\w;\l=\b+2*\r+\w;}
\begin{scope}
\clip (0,0) rectangle (\l,\h);
\begin{knot}[background color=white]
\strand[x radius=\r,y radius=\h+0.2] (0,0) arc[start angle=180,end angle=0]
  ++(\w,0) arc[start angle=180,end angle=0]
  (\b,0) -- +(0,\h) (\b+\r,0) -- +(0,\h);
\strand[blue] (\b+0.05,0.4)circle[x radius=1.4*\r,y radius=0.2];
\flipcrossings{2,4,6}
\end{knot}
\end{scope}
\foreach \x in {\l,\l-\r} \draw (\x,0) -- +(0,\h);
\draw[fill] (0,0) rectangle (\l,-0.1);
\foreach \x in {2*\r+\w/2,\l-\r/2,\l-\w-1.5*\r}
\path (\x,0.8*\h)node{...};
\end{tikzpicture}
\caption{Additional Dehn twist generators of the mapping class group}
\label{fig-mcg-gens}
\end{figure}

For a closed 3-manifold $M$, \cite{MOO:gl1} gave the formula for the numerical
$\gl_1$-RT invariants, which can be generalized to include an embedded colored
$\gl_1$-link $U$, except we divide their formula by $\sqrt{n}$ so that $S^2\times
S^1$ has invariant $1$, as opposed to $\sqrt{n}$ in \cite{MOO:gl1}. Denote this
generalized and renormalized invariant by $Z_q(M,U)$.

\begin{theorem}
\label{thm.gl1}
Suppose $\varphi$ is framed. Then $\tr A^H_{\varphi,\zeta_v}(q)$ agrees with
$Z_q(M_{\widehat\varphi},\widehat{P}_\varphi)$ up to an unambiguous $4$-th root of
unity. Here, $M_{\widehat\varphi}$ is the closed mapping torus of the mapping class
$\widehat{\varphi}$ of $\surclose$ obtained by capping off all punctures,
$\widehat{P}_\varphi$ is the colored link in $M_{\widehat\varphi}$ obtained by the
closure of the puncture braid.

Consequently, $T^H_{\varphi,\zeta_v}(q)$ agrees with
$Z_q(M_{\widehat\varphi},\widehat{P}_\varphi)$ up to $\mu_{12n^g}$.
\end{theorem}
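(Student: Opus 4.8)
The plan is to identify $A^H_{\varphi,\zeta_v}(q)$ with the Reshetikhin--Turaev operator of the mapping cylinder $C_\varphi$ and then run the standard ``trace of the cylinder operator $=$ invariant of the mapping torus'' principle, keeping careful track of the framing anomaly of the $\gl_1$ theory (which is $\mu_4$-valued since $n$ is odd) and of the $1/\sqrt n$ built into the normalization of $Z_q$. First I would unwind \eqref{eq-AH-def}: as $\varphi$ fixes $\zeta_v$ (so $W^H_{\zeta_v}=W^H_{\varphi(\zeta_v)}$, cf.\ Theorem~\ref{thm.1}), $A^H_{\varphi,\zeta_v}(q)$ is an endomorphism of $W^H_{\zeta_v}$ and
\[
\tr A^H_{\varphi,\zeta_v}(q)=\frac{\delta(q)^{-\sigma(Q)}}{\sqrt{n^{g+\#L}}}\ \sum_{h\in(\ints/n\ints)^g}\ \sum_{k\in(\ints/n\ints)^{\#L}}q^{c^tQc}\Big|_{h_0=h_1=h}=:\frac{\delta(q)^{-\sigma(Q)}}{\sqrt{n^{g+\#L}}}\,\Gamma .
\]
Imposing $h_0=h_1$ is exactly the effect of identifying the two coupons of the planar diagram $L\sqcup G$ of $C_\varphi$: the planar region becomes $S^2\times S^1$, the ribbon graph $G$ closes up so that the $p$ puncture ribbons become the braid closure $\widehat{P}_\varphi$, and one obtains a surgery diagram for the closed mapping torus $M_{\widehat\varphi}$ with $\widehat{P}_\varphi$ the $\zeta_v$-colored sublink coming from the (unsummed) puncture ribbons; since those ribbons are the cores of the solid tori filling the punctures, this identification is precisely the capping off, and $\widehat\varphi$ is the capped-off monodromy. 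This is the marked-point version of the construction of \cite[Section~8]{MOO:gl1}, whose trace computes the RT invariant of the mapping torus up to the anomaly.

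Next I would pass to $S^3$: replacing the $S^1$-factor by one extra $0$-framed unknot $U_0$ exhibits $M_{\widehat\varphi}$ as surgery on a link $\widehat L$ with $\#L+g+1$ components, and a linking-number computation shows that the linking matrix of $\widehat L$ is $\widehat Q=\tilde Q\oplus(0)$, where $\tilde Q$ is the restriction of $Q$ to $\{h_0=h_1\}$ (including the linear terms from the fixed puncture colors $\ell_i$) and the $(0)$-block is $U_0$. The linear coupling of $U_0$ to the surgery variables vanishes because $\sum_i\ell_i\equiv 0\pmod{n}$ (this is $\prod_v\zeta_v=1$), so summing over the colour of $U_0$ contributes a clean factor $n$; thus the Gauss sum of $\widehat Q$ is $n\Gamma$ and $\sigma(\widehat Q)=\sigma(\tilde Q)$. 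The renormalized surgery formula for $Z_q$ --- the formula of \cite{MOO:gl1} divided by $\sqrt n$, so an $m$-component surgery carries the prefactor $\delta(q)^{-\sigma}/(\sqrt n)^{m+1}$ with the colored link adjoined --- then yields
\[
Z_q(M_{\widehat\varphi},\widehat{P}_\varphi)=\frac{\delta(q)^{-\sigma(\tilde Q)}}{(\sqrt n)^{\#L+g+2}}\cdot n\,\Gamma=\frac{\delta(q)^{-\sigma(\tilde Q)}}{\sqrt{n^{g+\#L}}}\,\Gamma ,
\]
so that $\tr A^H_{\varphi,\zeta_v}(q)=\delta(q)^{\sigma(\tilde Q)-\sigma(Q)}\,Z_q(M_{\widehat\varphi},\widehat{P}_\varphi)$. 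The magnitude bookkeeping here is Corollary~\ref{cor-TH-values}: the $(0)$-block, i.e.\ the class of the $S^1$-fibre, is the single extra $\ints/n\ints$-summand of $H_1(M_{\widehat\varphi};\ints/n\ints)$ (the cokernel of $\widehat Q$ over $\ints/n\ints$) not seen by $\tilde Q$, and it accounts for the $1/\sqrt n$.

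Since $n$ is odd, $\delta(q)^4=1$, so $\delta(q)^{\sigma(\tilde Q)-\sigma(Q)}\in\mu_4$; and $\sigma(Q)-\sigma(\tilde Q)$ is an integer unchanged by stabilizations, handle slides, and the choice of mapping-class generators, so it depends only on $\varphi$ and its framing and the $4$-th root is unambiguous. This is the first assertion. For the second, combine it with Theorem~\ref{thm-gl1-RT}(3): for $g\ge1$, $\det A^H_{\varphi,\zeta_v}(q)\in\mu_{12}$, so normalizing by $(\det A^H_{\varphi,\zeta_v}(q))^{1/n^g}$ introduces only a $\mu_{12n^g}$-ambiguity, and since $\mu_4\subset\mu_{12}\subset\mu_{12n^g}$ the total ambiguity is $\mu_{12n^g}$; for $g=0$ all spaces are $1$-dimensional and the statement reduces to the first assertion.

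The step I expect to be the real obstacle is the middle one --- verifying, from the combinatorics of the coupon-gluing, that the trace of \eqref{eq-AH-def} is $Z_q(M_{\widehat\varphi},\widehat{P}_\varphi)$ in the $1/\sqrt n$-renormalized normalization of \cite{MOO:gl1}, with the sole discrepancy a specific element of $\mu_4$: in particular the appearance of $U_0$, the identity $\widehat Q=\tilde Q\oplus(0)$ for the linking forms (including the linear $\ell_i$-terms and the use of $\prod_v\zeta_v=1$), and the invariance of the signature defect. Everything else is routine Gauss-sum manipulation or a direct appeal to the corresponding closed-manifold statements of \cite{MOO:gl1}.
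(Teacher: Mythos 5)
Your proposal is correct and follows essentially the same route as the paper's proof: close up the surgery diagram of $C_\varphi$ with one extra $0$-framed belt loop to present $(M_{\widehat\varphi},\widehat{P}_\varphi)$, observe that the belt-loop colour sum decouples and gives a factor $n$ cancelled by the $\sqrt{n}$'s in the prefactor and the renormalization, identify the remaining sum (with $h_0=h_1$) as the trace of \eqref{eq-AH-def}, attribute the residual unambiguous $\mu_4$ to the signature defect of the linking matrices, and deduce the second assertion from Theorem~\ref{thm-gl1-RT}(3). One small correction of attribution: the vanishing of the belt loop's coupling to the \emph{summed} components is a geometric linking-number fact (each closed-up handle component passes through the belt loop twice with opposite signs), whereas the relation $\sum_i\ell_i\equiv 0\bmod n$ (i.e.\ $\prod_v\zeta_v=1$) is what kills the coupling to the \emph{unsummed} puncture strands — your sentence conflates the two, but both ingredients appear in your argument, so the conclusion stands.
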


\begin{remark}
\label{rem.gl1phase}
We emphasize that both the operator $A^H_{\varphi,\zeta_v}(q)$ and the numerical
invariant $Z_q(M_{\widehat\varphi},\widehat{P}_\varphi)$ have no ambiguity once a
framed lift of $\varphi$ is chosen. On the other hand, $\det
A^H_{\varphi,\zeta_v}(q)$ is not $1$ in general, but we need to normalize it to be
exactly $1$ for the main Theorem~\ref{thm.1} in its current form, which is why
$T^H_{\varphi,\zeta_v}(q)$ has a $\mu_{n^g}$ ambiguity. 
\end{remark}

\begin{proof}
A surgery diagram of $(M_{\widehat\varphi},\widehat{P}_\varphi)$ can be obtained by the
closure of the surgery diagram $L\sqcup G$ with one extra $0$-framed belt loop. See
Figure~\ref{fig-mtorus-surg}. The belt loop has $0$ linking with all other summed
components. In addition, recall the product of the puncture weights is assumed to be
$1$. This shows that the sum over the belt loop is completely decoupled, giving a
factor of $n$, cancelled by a $\sqrt{n}$ in the prefactor and another $\sqrt{n}$ by
our normalization choice. The rest of the sum is exactly the definition of trace. The
difference of $\mu_4$ comes from the difference in the signature of the linking
matrices.
\end{proof}

\begin{figure}
\centering
\begin{tikzpicture}[very thick]
\tikzmath{\w=0.4;\r=0.5;\h=1.2;\l=5*\w;\a=3*\w;}
\draw (0,0) arc[x radius=\w,y radius=\r,start angle=180,end angle=0]
  (0,\h) arc[x radius=\w,y radius=\r,start angle=-180,end angle=0];
\foreach \x in {\l,\l-\w} \draw (\x,0) -- +(0,\h);
\foreach \y in {0.25*\r,\h-0.25*\r} \path (\a,\y)node{...};
\draw[fill] (0,0) rectangle (\l,-0.1) (0,\h) rectangle (\l,\h+0.1);
\fill[gray!60] (-0.1,0.4*\r) rectangle (\l+0.1,\h-0.4*\r);
\path (\l/2,\h/2)node{$L\sqcup G$};
\draw[->] (\l+0.5,\h/2) -- +(1,0);
\begin{scope}[xshift={(\l+2)*1cm}]
\draw (0,0) arc[x radius=\w,y radius=\r,start angle=180,end angle=0]
  (0,\h) arc[x radius=\w,y radius=\r,start angle=-180,end angle=0];
\foreach \x in {\l,\l-\w} \draw (\x,0) -- +(0,\h);
\foreach \y in {0.25*\r,\h-0.25*\r} \path (\a,\y)node{...};
\begin{knot}[background color=white]
\strand[x radius=\l-(\x)+\w,y radius=(\l-(\x)+\w)/2] foreach \x in {0,2*\w,\l-\w,\l}
{(\x,0) arc[start angle=-180,end angle=0] -- ++(0,\h) arc[start angle=0,end angle=180]};
\strand[red] (1.5*\l+2*\w,\h/2) circle[x radius=\l/2+\w,y radius=\h/4];
\flipcrossings{2,4,6,8}
\end{knot}
\fill[gray!60] (-0.1,0.4*\r) rectangle (\l+0.1,\h-0.4*\r);
\path (\l/2,\h/2)node{$L\sqcup G$};
\end{scope}
\end{tikzpicture}
\caption{Surgery diagram for the mapping torus}\label{fig-mtorus-surg}
\end{figure}
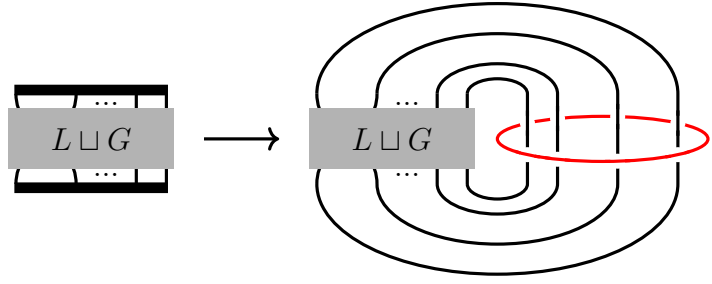

\begin{corollary}
\label{cor-TH-values}
\begin{enumcc}[(a)]
\item
The only possible values of $\abs{T^H_{\varphi,\zeta_v}(q)}$ are
$\frac{1}{\sqrt{n}}\abs{H_1(M_{\widehat\varphi},\ints/n\ints)}^{1/2}$ or $0$.
The former is achieved when $\zeta_v=1$.
\item
If $H_1(M_{\widehat\varphi},\ints/n\ints)=\ints/n\ints$,
then $\abs{T^H_{\varphi,\zeta_v}(q)}=1$ for all puncture weights.
\end{enumcc}
\end{corollary}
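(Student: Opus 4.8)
The plan is to deduce Corollary~\ref{cor-TH-values} directly from Theorem~\ref{thm.gl1} together with the explicit formula \eqref{eq-AH-def}. By Theorem~\ref{thm.gl1}, $\abs{\tr A^H_{\varphi,\zeta_v}(q)} = \abs{Z_q(M_{\widehat\varphi},\widehat P_\varphi)}$, so it suffices to evaluate the absolute value of the renormalized abelian RT invariant of the closed mapping torus with the colored puncture link inside. For part (a) in the case $\zeta_v = 1$, the puncture link is colored trivially, so $Z_q(M_{\widehat\varphi},\widehat P_\varphi) = Z_q(M_{\widehat\varphi})$ is just the abelian $U(1)$-invariant of the closed $3$-manifold, which is a Gauss sum attached to the linking matrix $Q$ of the surgery diagram. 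The standard evaluation of such Gauss sums (as in \cite{MOO:gl1}, adjusted by our $\sqrt n$-renormalization) gives $\abs{Z_q(M_{\widehat\varphi})} = \frac{1}{\sqrt n}\abs{H_1(M_{\widehat\varphi},\ints/n\ints)}^{1/2}$: the sum over colorings factors through the cokernel of $Q$ modulo $n$, which is $H_1(M_{\widehat\varphi},\ints/n\ints)$, and on the non-degenerate part one gets a unit-modulus Gauss sum while the degenerate part contributes the order of the cokernel. The remaining normalization bookkeeping between $\tr A^H$ and $T^H_{\varphi,\zeta_v}$ only affects the phase, hence not the absolute value.

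Next I would handle the case $\zeta_v \ne 1$ in part (a). Here the argument is a dichotomy: either the colored link $\widehat P_\varphi$ represents a torsion class that is ``visible'' to the abelian theory or not. Concretely, writing the coloring vector $c = (k, h_0, h_1, \ell_1,\dots,\ell_p)$ with the $\ell_i$ fixed (not summed) and expanding the quadratic form $c^t Q c$, the sum over the free variables $(k,h_0,h_1)$ of $q^{c^tQc}$ is, up to a unit phase, either zero (when a certain linear functional determined by the $\ell_i$ fails to vanish on the relevant sublattice mod $n$) or a Gauss sum of the same shape as in the trivial-coloring case, yielding again $\frac{1}{\sqrt n}\abs{H_1(M_{\widehat\varphi},\ints/n\ints)}^{1/2}$. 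So the only possible nonzero value coincides with the $\zeta_v = 1$ value, giving the ``or $0$'' statement. Part (b) is then immediate: if $H_1(M_{\widehat\varphi},\ints/n\ints) = \ints/n\ints$ has order exactly $n$, then $\frac{1}{\sqrt n}\abs{H_1(M_{\widehat\varphi},\ints/n\ints)}^{1/2} = \frac{1}{\sqrt n}\cdot n^{1/2} = 1$, and since this is the only nonzero possibility from part (a) — and I claim it is always nonzero in this minimal-homology case because the obstruction functional above automatically vanishes when the torsion is cyclic of order $n$ and the puncture weights satisfy $\prod_v \zeta_v = 1$ — we get $\abs{T^H_{\varphi,\zeta_v}(q)} = 1$ for all puncture weights.

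I expect the main obstacle to be the bookkeeping in the $\zeta_v \ne 1$ case: pinning down precisely which linear condition on the fixed colors $\ell_i$ governs vanishing versus not, and checking that when it does not vanish the resulting Gauss sum has exactly the same modulus as in the trivial case rather than a smaller one. This requires being careful with the block structure of $Q$ coming from the surgery diagram $L \sqcup G$ (the belt-loop decoupling already exploited in the proof of Theorem~\ref{thm.gl1} reappears here) and with the mod-$n$ arithmetic of the Smith normal form of $Q$. A secondary but genuine point is making sure that in the minimal-homology case (b) the invariant is genuinely nonzero and not accidentally zero — this is where the constraint $\prod_{v\in\punctures}\zeta_v = 1$ from \eqref{eq-weight-rel} is used essentially, ensuring the puncture coloring pairs trivially against the belt-loop class that generates $H_1(M_{\widehat\varphi},\ints/n\ints)$. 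Everything else is a routine Gauss-sum computation that I would not spell out in full.
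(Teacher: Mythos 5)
Your proposal is correct and follows essentially the same route as the paper: both reduce to the quadratic Gauss-sum form of the abelian RT invariant of the closed mapping torus (with the $\zeta_v=1$ value given by the renormalized formula of \cite{MOO:gl1}), use the same dichotomy — either the linear term coming from the fixed puncture colors can be absorbed by a substitution, reproducing the $\zeta_v=1$ modulus, or there is a flat direction of $Q$ mod $n$ on which it is nontrivial and the sum vanishes — and in the minimal-homology case exclude the vanishing alternative by nondegeneracy of the linking matrix mod $n$ after removing the belt loop (the paper cites \cite[Lemma~3.3]{MOO:gl1}), with the constraint $\prod_{v}\zeta_v=1$ taking care of the belt-loop direction. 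No genuinely different idea is involved, so nothing further needs comparing.
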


\begin{proof}
(a) First, if all puncture weights are $1$, then $\widehat{P}_\varphi$ does not
contribute to the terms, so the $\gl_1$-RT invariant is given in \cite{MOO:gl1},
whose renormalization gives the formula above.

Since the $\gl_1$-RT invariant is given by a quadratic sum of the form
\begin{equation}
\sum_{k\in(\ints/n\ints)^{\#L}}q^{k^tQk+2\ell^tk}
\end{equation}
with some fixed prefactor, there are only two possibilities. Either the linear term
can be absorbed by a substitution, or there is a flat direction of $Q$ over
$\ints/n\ints$ where the linear term is nontrivial. In the first case, the sum is
independent of the puncture weights, so the formula for $\zeta_v=1$ applies. In the
second case, the sum in the flat direction gives $0$.

(2) By \cite[Lemma~3.3]{MOO:gl1}, after removing the belt loop, $Q$ is nondegenerate
over $\ints/n\ints$. Therefore, all puncture weights fall into the first case.
\end{proof}

\subsection{Proof of the main theorem}
\label{sub.thm1}

In this section we reduce the proof of the main theorem~\ref{thm.1} to the proof
of a Lemma~\ref{lem-key}.

Fix a generic character $r$ that admits a decoration $h$ and puncture weights
$\zeta_v$ satisfying \eqref{eq-weight-rel}. Assume that $(r,h)$ is
invariant under a mapping class $\varphi$.
Then, we obtain a representation
\begin{equation}
\rho_\lambda:\rdtorus^\Kas_{q,\sigma}(\lambda)\to\End(W) \,.
\end{equation}
Using Lemmas~\ref{lem-sub-tori} and \ref{lem-sub-Kas}, $\rho_\lambda$ decomposes
into irreducible representations 
\begin{equation}
W=\bigoplus_{\zeta_v}W_{\zeta_v},\qquad
W_{\zeta_v}=\{w\in W\mid \rho(X^{c_v})w=\zeta_vw\text{ for }v\in\punctures\}
\end{equation}
of $\rdtorus^\CFr_{q,\sigma}(\lambda)\otimes_{\cx[P_\lambda]}
\rdtorus_{q,1}(H)$.

The action of $\varphi_\ast$ permutes these components, and
$\Theta^{\lambda\varphi(\lambda)}_q$ preserves the components. Therefore, the
intertwiner $A^\Kas_{\varphi,r}(q) \in\GL(W)$ (abbreviated by $A^\Kas$ in the rest of
the proof) for the Kashaev algebra has a block decomposition
\begin{equation}
\label{eq-interK-block}
A^\Kas = \bigoplus_{\zeta_v} A^\Kas_{\zeta_v}, \qquad
A^\Kas_{\zeta_v}=A^\Kas
|_{W_{\zeta_v}}:W_{\zeta_v}\to W_{\varphi(\zeta_v)}
\end{equation}
which are intertwiners of the components
\begin{equation}
  \rho_\lambda(x)|_{W_{\zeta_v}}=(A^\Kas_{\zeta_v})^{-1}\cdot
  \rho_\lambda^\varphi(x)
  |_{W_{\varphi(\zeta_v)}}\cdot A^\Kas_{\zeta_v},
\qquad x\in\rdtorus^\CFr_{q,\sigma}(\lambda)\otimes_{\cx[P_\lambda]}\rdtorus_{q,1}(H).
\end{equation}
Here, $\varphi(\zeta_v)$ denotes the set of puncture weights which is $\zeta_v$ on
$\varphi(v)$.

\begin{lemma}
\label{lem-key}
With the notations above, there exists a choice of $A^\Kas$ and choices of bases
of $W_{\zeta_v}$ such that $\det(A_{\zeta_v})=1$ for all $\zeta_v$.
\end{lemma}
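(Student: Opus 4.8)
The plan is to make $\rho_\lambda$ and the intertwiner $A^\Kas$ completely explicit and then compute determinants blockwise. First I would realize $\rho_\lambda\colon\rdtorus^\Kas_{q,\sigma}(\lambda)\to\End(W)$ by the tensor model \eqref{eq-qtorus-rep} for a symplectic basis of $L_\lambda$ whose first $p-1$ hyperbolic pairs are $(c_v,b_v)$, with $c_v$ the puncture classes and $b_v$ the symplectic duals built from a maximal tree as in Lemma~\ref{lem-sub-Kas} (legitimate because $P_\lambda$ is a direct summand of $L_\lambda$). Then $W=V^{\otimes(p-1)}\otimes W_0$ with $W_0=V^{\otimes(4g-3+p)}$, the operators $\rho(X^{c_v})$ act diagonally with distinct eigenvalues on the first $p-1$ factors (by \eqref{cond-even} each $d_v$ is prime to $n$), and the isotypic block for a weight $\zeta_v$ is $W_{\zeta_v}=\cx e_{\mathbf j(\zeta_v)}\otimes W_0$ for a single standard vector. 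Forgetting the frozen label gives canonical isomorphisms $\iota_{\zeta_v}\colon W_{\zeta_v}\to W_0$ for every weight; these fix the bases in which determinants will be normalized. The structural observation to record is that any $x^k$ commuting with all $\rho(X^{c_v})$ has $\beta_v$-component divisible by $n$, hence acts on the frozen factors only through the diagonal powers $S^{a_v d_v}$ (the $T$-part vanishes since $T^n=1$); so, transported to $W_0$ by the $\iota_{\zeta_v}$, the restriction of $\rho$ to the centralizer of the $\rho(X^{c_v})$ changes, as $\zeta_v$ varies, only by an $n$-th root-of-unity scalar twist $\chi_{\zeta_v}$.

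Second I would build $A^\Kas$ as a product of elementary intertwiners along a path of flips from $\lambda$ to $\varphi(\lambda)$, followed by the relabelling intertwiner for $\varphi_\ast$; by \eqref{eq-flip-def} each flip intertwiner decomposes as a monomial-relabelling part followed by multiplication operators coming from the factors $(1+qX_e)$ and $(1+q^{-1}X_e)^{-1}$, and the $\varphi_\ast$ intertwiner is assembled from $S$, $T$ and tensor transpositions as in the proof of Lemma~\ref{lem-unitary}. The claim is that $\det A_{\zeta_v}$ equals one fixed scalar $\Delta$ for all $\zeta_v$. For a monomial piece, the source and target representations (restricted to the centralizer of the $\rho(X^{c_v})$ and transported to $W_0$) differ from their trivial-weight versions by the \emph{same} scalar twist $\chi_{\zeta_v}$, and a scalar twist cancels from the intertwining equation $\rho(x)=B^{-1}\rho'(x)B$; so one and the same operator on $W_0$ works for every $\zeta_v$, and its determinant is a root of unity because $\det S=q^{n(n-1)}=1$, $\det T=(-1)^{n-1}=1$ (here $n$ odd is used) and a tensor transposition has the fixed sign $(-1)^{n(n-1)/2}$. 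For a multiplication piece, $(1+qX_e)$ preserves each $W_{\zeta_v}$ and transports to $1+q\chi_{\zeta_v}(e)\rho_0(X_e)$, whose determinant, using that $\rho_0(X_e)$ has spectrum the $n$-th roots of the shear coordinate $\sigma(X_e)$ with equal multiplicity (Lemma~\ref{lem-mon-eig}), the substitution $\mu\mapsto\chi_{\zeta_v}(e)\mu$, and $\prod_{\mu^n=t}(1+q\mu)=1+t$ (valid since $q^n=1$ and $\chi_{\zeta_v}(e)^n=1$), equals $(1+\sigma(X_e))^{\dim W_0/n}$, independent of $\zeta_v$. Multiplying the contributions, $\det A_{\zeta_v}=\Delta$ for all $\zeta_v$.

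Finally, as $\cx^\times$ is divisible, choose $t$ with $t^{\dim W_0}=\Delta^{-1}$ and replace $A^\Kas$ by $tA^\Kas$; this is still an admissible representative of the scalar-ambiguous intertwiner, and now $\det A_{\zeta_v}=1$ for every $\zeta_v$. As a byproduct $\det A^\Kas$ is then the sign of the permutation $\varphi$ induces on puncture weights (since $\dim W_0$ is odd), the sign appearing in Theorem~\ref{thm.1}. The real obstacle is setting up the explicit model and the twist-covariance correctly: the symplectic basis adapted to the puncture sublattice is \emph{not} adapted to the face-by-face structure that makes a single flip ``local'', so some care is needed to verify that each elementary piece has the asserted form (monomial part intertwining representations that differ only by the scalar twist, multiplication part preserving the blocks); granted that, the determinant computation is exactly the two short identities above.
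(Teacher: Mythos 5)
Your overall skeleton --- the tensor model of \eqref{eq-qtorus-rep} adapted to a symplectic basis containing $P_\lambda$ (Lemma~\ref{lem-sub-Kas}), the identification of each block $W_{\zeta_v}$ with a fixed space $W_0$, the claim that all block determinants coincide, and the final normalization by one overall scalar using divisibility of $\cx^\times$ --- is the same as the paper's. But there is a genuine gap in your identification of the elementary non-monomial pieces of the intertwiner. The factors $(1+qX_e)$ and $(1+q^{-1}X_e)^{-1}$ in \eqref{eq-flip-def} are elements of the target algebra appearing in the images of the generators; they are \emph{not} the operator that conjugates $\rho\circ m^{\lambda\lambda'}$ into $\rho\circ\Theta^{\lambda\lambda'}_q$. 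Indeed, if $\rho(x_a)\rho(X_e)=q^{\pm2}\rho(X_e)\rho(x_a)$, then conjugation by $1+q\rho(X_e)$ sends $\rho(x_a)$ to $\rho(x_a)(1+q^{1\mp2}\rho(X_e))(1+q\rho(X_e))^{-1}$, which is not the flip formula $x_a\mapsto x_a(1+qX_e)$. The operator implementing the inner part of a flip must satisfy the functional equation $F(q^2x)=\theta(1+qx)F(x)$, i.e.\ it is the cyclic quantum dilogarithm $\Psi_q^\theta$ of \eqref{Psieq}, and the actual intertwiner is $A^\Kas=\rho_\lambda\bigl(\Psi_q^{\theta_0}(X^\#_0)\dotsm\Psi_q^{\theta_{N-1}}(X^\#_{N-1})\bigr)B$ as in \eqref{eq-flip-factor} and \eqref{AB}. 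Consequently the product you construct is not an intertwiner (not even up to scalar), and your determinant $(1+\sigma(X_e))^{\dim W_0/n}$ is the determinant of the wrong operator. The repair is exactly the paper's computation: by the normalization $\prod_{x^n=t_e}\Psi_q^\theta(x)=1$ in \eqref{Psieq} together with the equal-multiplicity statement of Lemma~\ref{lem-mon-eig} (which you do invoke correctly), each dilogarithm factor has determinant $1$ on every block, so the block-equality you want holds --- but only after the elementary pieces are corrected.

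A secondary issue is the monomial part. Your ``scalar twist'' argument needs $\chi_{\zeta_v}(g(x))=\chi_{\zeta_v}(x)$ for the relevant monomial automorphisms $g$, and this is not automatic: $\varphi$ may permute the puncture weights (so blocks are mapped to one another, not preserved), and the scalar maps $\hat{s}_N\circ\varphi_\ast$ and $\hat{s}_0$ differ by roots of unity that must be tracked. The paper handles this by factoring $M_N\circ\varphi_\ast$ into a puncture-permutation part $f_1$ (whose blocks are identity matrices), a symplectic part $f_2$ fixing $P_\lambda$ pointwise, realized by the Gaussian elements $E_a$ with $a\in P_\lambda^\perp$ (equal block determinants again by an equal-spectrum argument), and a pure rescaling (block determinants $1$). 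You flag this verification yourself as the ``real obstacle''; as written it is asserted rather than proved, and it is precisely where the extra decomposition is needed.
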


This is a key lemma and an entire next Section~\ref{sec.proofs} is devoted to it.
Assuming this Lemma, we can now give a proof of Theorem~\ref{thm.1}. 

\begin{proof}[Proof of Theorem~\ref{thm.1}]
Each component $W_{\zeta_v}$ factors as a tensor product of irreducible representations
\begin{equation}
W_{\zeta_v}\cong W^\CF_{\zeta_v}\otimes_\cx W^H_{\zeta_v},
\end{equation}
where $W^\CF_{\zeta_v}$ and $W^H_{\zeta_v}$ are irreducible representations of
$\rdtorus^\CFr_{q,\sigma}(\lambda)$ and $\rdtorus_{q,1}(H)$, respectively. Both
$\varphi_\ast$ and $\Theta^{\lambda\varphi(\lambda)}_q$ respect the tensor product
$\rdtorus^\CFr_{q,\sigma}(\lambda)\otimes_{\cx[P_\lambda]}\rdtorus_{q,1}(H)$. Thus,
the intertwiner of the invariant blocks decomposes into
\begin{equation}
\label{operators}  
A^\Kas_{\zeta_v}=A^\CF_{\zeta_v}\otimes_\cx A^H_{\zeta_v},\qquad
A^\bullet_{\zeta_v}\in\Hom(W^\bullet_{\zeta_v},W^\bullet_{\varphi(\zeta_v)}).
\end{equation}
In other words,
\begin{equation}
\label{thm1-ops}
A^\Kas = \bigoplus_{\zeta_v} A^\CF_{\zeta_v}\otimes_\cx A^H_{\zeta_v}.
\end{equation}
This proves a stronger operator version of Theorem~\ref{thm.1}.

To obtain Theorem~\ref{thm.1}, we choose $A^\Kas$ according to the lemma. Each block
has determinant $1$, so we can assume that $A^\bullet_{\zeta_v}$ have determinant $1$
as well. On the other hand, we only have $\det A^\Kas=\pm1$ since the blocks permute
the puncture weights, so $T^\Kas_{\varphi,r}(q)=\pm\tr A^\Kas$. On the other side,
only the components fixed by $\varphi$ contribute to the trace, and clearly
$\tr(A^\CF_{\zeta_v}\otimes_\cx
A^H_{\zeta_v})=T^\CF_{\varphi,r,\zeta_v}(q)T^H_{\varphi,\zeta_v}(q)$ for the ones
that contribute. This proves the main theorem.
\end{proof}

\subsection{Further decompositions of representations of the Chekhov--Fock algebra}

In this section we comment on two further invariants that BB and Ishibashi defined,
and in fact we will see that they are not new. We start with an irreducible
representation $W$ of the Kashaev algebra. 
As mentioned in Remark~\ref{rem.BB}, previous works consider $W$ as a reducible
representation of the Chekhov--Fock algebra only. We discuss two more definitions
related to the decomposition of $W$.

We continue to use the notations in the last section. By \cite{Tou}, there is a
decomposition
\begin{equation}
W\cong\bigoplus_{\zeta_v}n^gW^\CF_{\zeta_v}
\end{equation}
as representations of the Chekhov--Fock algebra. The decomposition is not
unique, but each summand $n^gW^\CF_{\zeta_v}$, namely the isotypical components,
is well-defined. It is clear that these components are simply $W_{\zeta_v}$.
Therefore, BB's isotypical intertwiner $L^\varphi_{\rho_\lambda(\mu)}$ in
\cite{BB:fiber} is $A^\Kas_{\zeta_v}$ in our notation. If the puncture weights
are invariant, then the trace of the isotypical intertwiner is
$T^\CF_{\varphi,r,\zeta_v}(q)T^H_{\varphi,\zeta_v}(q)$, as discussed at the end
of the last section.

Ishibashi constructed a specific decomposition of $W$ into irreducible components in
\cite{Ishi}. The components are described as simultaneous eigenspaces for
$\rdtorus_{q,1}(I)$ with $I\subset H$ maximal isotropic. Explicitly, given
$c:I\to\mu_n$,
\begin{equation}
W_c=\{w\in W\mid\rho_\lambda(x^k)w=c(k)w\text{ for all }k\in I\}
\cong W^\CF_{\zeta_v}\otimes_\cx \Span\{w_c\},
\end{equation}
where $w_c\in W^H_{\zeta_v}$ is a simultaneous eigenvector of
$\rdtorus_{q,\sigma}(I)$. Note the puncture weights are included in $c$.

In general, the components $W_c$ are permuted by $A^\Kas$. However, the permutation
can be cancelled by some further choices. Let $B\in\Sp(H)$ such that
$\varphi_\ast\circ B$ restricts to identity on $I$. Let
$B^H_{\zeta_v}\in\GL(W^H_{\zeta_v})$ be the intertwiner associated to the monomial
automorphism of $\rdtorus_{q,1}(H)$ induced by $B$. Using the construction in
Lemma~\ref{lem-unitary}, we see that $A^H_{\zeta_v}B^H_{\zeta_v}=\diag\{q^{\ell_c}\}$
is diagonal in the basis $\{w_c\}$. Therefore,
\begin{equation}
  A^\Kas_{\zeta_v}\circ(1\otimes B^H_{\zeta_v})=A^\CF_{\zeta_v}
  \otimes_\cx(A^H_{\zeta_v}B^H_{\zeta_v})=\bigoplus_c A^\CF_{\zeta_v}\otimes q^{\ell_c}
\end{equation}
This proves \cite[Conjecture~8.6]{Ishi} under the conditions of Theorem~\ref{thm.1}.

\begin{remark}
\label{fixedp}
We remark that both BB and Ishibashi implicitly assumed that the puncture
weights are fixed by the mapping class. In our setting, this does not have to be
the case; see Example $\varphi=T_b^{-1} T_a T_c X$ of Section~\ref{sub.ex12} on
a surface of genus one with two punctures.
\end{remark}


\section{Operator determinants are roots of unity}
\label{sec.proofs}

This section is devoted to the proof of Lemma~\ref{lem-key}.

\subsection{Factorization of the flip}

In this section we recall a factorization of $\Theta^{\lambda\lambda'}_q$ for a flip
~\eqref{theta} of two triangulations $\lambda$ and $\lambda'$ of the surface. This
factorization is given by Fock--Goncharov \cite{FG:q} in terms of a formal adjoint
action. We discuss a modified version of this factorization with a concrete adjoint.

Continuing the notations from Section~\ref{sec-coord}, let
$\hat{s}':L_{\lambda'}\to\cx^\times$ be an extension of $\sigma'$,
and choose an $n$-th root $\theta=(1+t_e)^{-1/n}$, where $e$ is the edge getting
flipped. 

By convention, the adjoint is given
by
\begin{equation}
\Ad(A)(B)=ABA^{-1}.
\end{equation}
There is a factorization of the coordinate change
\begin{equation}
\label{eq-flip-factor}
\Theta^{\lambda\lambda'}_q=\Ad(\Psi_q^\theta(X_e))\circ
m^{\lambda\lambda'}_{\hat{s}\hat{s}^\theta}
\end{equation}
as follows. Define a linear map $m^{\lambda\lambda'}:L_{\lambda'}\to L_\lambda$ by
$m^{\lambda\lambda'}(\chi'_i)=\chi_i$ if $i$ is not in the local picture
Figure~\ref{fig-flip}, and
\begin{equation}
\begin{aligned}
m^{\lambda\lambda'}(\chi'_a)&=\chi_a,&
m^{\lambda\lambda'}(\chi'_b)&=\chi_b+\chi_e,\\
m^{\lambda\lambda'}(\chi'_c)&=\chi_c,&
m^{\lambda\lambda'}(\chi'_d)&=\chi_d+\chi_e.
\end{aligned}
\end{equation}
The defining relations determine $m^{\lambda\lambda'}(\chi'_{e_1})=\chi_b+\chi_c$ and
$m^{\lambda\lambda'}(\chi'_{e_2})=\chi_a+\chi_d$. Let
\begin{equation}
\label{eq-flip-scalar}
\hat{s}^\theta:L_{\lambda'}\to\cx^\times,\qquad
\hat{s}^\theta(k)=\hat{s}(m^{\lambda\lambda'}(k))\theta^{\omega(m^{\lambda\lambda'}(k),\chi_e)}.
\end{equation}
The $n$-th power of this formula is compatible with $\Theta^{\lambda\lambda'}_1$, so
$\hat{s}^\theta$ is an extension of $\sigma'$. Then
$m^{\lambda\lambda'}_{\hat{s}\hat{s}^\theta}:
\rdtorus^\Kas_{q,\sigma'}(\lambda')\to\rdtorus^\Kas_{q,\sigma}(\lambda)$ is the
rescaled monomial map based on $m^{\lambda\lambda'}$ and scalar maps
$\hat{s},\hat{s}^\theta$. (Note $m^{\lambda\lambda'}\circ
m^{\lambda'\lambda}\ne\id$.)

Next we define a special function which is the building block of the representations
of the quantum tori that we consider in this paper, and of the corresponding BB, BLWY
and 1-loop invariants. Recall the cyclic quantum dilogarithm that appeared first
in~\cite{Kashaev:star}
\begin{equation}
\label{Dz}  
D_{q}(x)=\prod_{j=1}^{n-1}(1-q^{j}x)^{j}.
\end{equation}
Let $\Psi_q^\theta:\{x\mid x^n=t_e\}\to\cx$ be a function satisfying the following
properties
\begin{equation}
\label{Psieq}
\Psi_q^\theta(q^2x)=\theta(1+qx)\Psi(x), \qquad \prod_{x^n=t_e}\Psi_q^\theta(x)=1.
\end{equation}
$\Phi_q^\theta$ is uniquely determined (up to multiplication by $\mu_n$) by the
above functional equation. In fact, the functional equation implies that
\begin{equation}
\Psi_q^\theta(x)^n=c D_{q^2}(-qx),
\end{equation}
where $c$ is a normalization constant given below. To avoid consistency issues
with $n$-th roots, fix an $n$-th root $y=t_e^{1/n}$, and write $x=q^{2k}y$. Then
\begin{equation}
\label{Psidef}  
\Psi_q^\theta(q^{2k}y) = c_y \theta^k (-qy;q^2)_k
= c_y \theta^k \prod_{j=1}^k(1+q^{2j-1}y),
\end{equation}
where $c_y$ is another normalization constant such that
\begin{equation}
c_y^n=\theta^{n(n-1)/2}D_{q^2}(-qy),
\end{equation}
which determines the constant
\be
\label{cval}
c=\theta^{n(n-1)/2}=(1+t_e)^{-\frac{n-1}{2}} \,.
\ee

Given $X \in \rdtorus^\Kas_{q,\sigma}(\lambda)$ such that $X^n = t_e$, we can define
$\Psi_q^\theta(X) \in \rdtorus^\Kas_{q,\sigma}(\lambda)$ as $f(X)$ for any
polynomial $f$ such that $f(x)=\Psi_q^\theta(x)$ for $x^n=t_e$. In every
representation, $\Psi_q^\theta(X)$ acts by $\Psi_q^\theta(x)$ in the
$x$-eigenspace.

It is simple to verify that the definitions above give the factorization
\eqref{eq-flip-factor}. The factorization depends on the choices of the extension
$\hat{s}'$ and the root $\theta$. The element $\Psi_q^\theta(X_e)$ has a
$\mu_n$-ambiguity, but the adjoint map does not.

The factorization can be similarly pulled back to the Chekhov--Fock and homology
quantum tori. For the homology part, both maps pull back to the identity. In the
Chekhov--Fock case, the pullback of $m^{\lambda\lambda'}$ is given by
\begin{equation}
\label{eq-lattice-flip}
m^{\lambda\lambda'}(\chi_i)=
\begin{cases}
-\chi_e,&i=e,\\
\chi_i+[\varepsilon_{ie}]_+\chi_e,&i\ne e.
\end{cases}
\end{equation}
Here, $[n]_+=\max(n,0)$ denotes the positive part, and the same adjustment for
self-folded faces applies here. The rest of the factorization goes through
without change.

\begin{remark}
The decomposition when $q$ is generic uses the same monomial map but without
rescaling, and the adjoint is formal with $(-qX_e;q^2)_\infty$ in place of
$\Psi_q^\theta(X_e)$.
\end{remark}

\subsection{Factorization of coordinate changes and intertwiners}
\label{sec-factor-chg}

Given a triangulation $\lambda$ and a mapping class $\varphi$, connect $\lambda$
and $\varphi(\lambda)$ by a sequence of flips
\begin{equation}
\lambda=\lambda_0,\lambda_1,\dotsc,\lambda_{N-1},\lambda_N=\varphi(\lambda)
\end{equation}
where $\lambda_{i+1}$ is the flip of $\lambda_i$ at $e_i\in\lambda_i$. For a
decorated generic character $(r,h)$, we have coordinates
$\sigma_i:L_{\lambda_i}\to\cx^\times$. For convenience, write $\sigma=\sigma_0$.

Choose an extension $\hat{s}_0:L_\lambda\to\cx^\times$ of $\sigma$ and all roots
$\theta_i$ as in the last section, we get the factorization \eqref{eq-flip-factor}
for each flip. Write $\hat{s}_{i+1}=\hat{s}^{\theta_i}$ as in \eqref{eq-flip-scalar}.
Let $m_i=m^{\lambda_{i-1}\lambda_i}_{\hat{s}_{i-1}\hat{s}_i}$ denote the rescaled
monomial map in the factorization of $\Theta^{\lambda_{i-1}\lambda_i}_q$. Then
\begin{equation}
\label{eq-co-chg-decomp}
\begin{split}
  \Theta^{\lambda\varphi(\lambda)}_q&
  =\Theta^{\lambda_0\lambda_1}_q\circ\dotsb\circ\Theta^{\lambda_{N-1},\lambda_N}_q\\
  &=\Ad(\Psi_q^{\theta_0}(X_{e_0}))\circ m_1\circ\dotsb\circ
  \Ad(\Psi_q^{\theta_{N-1}}(X_{e_{N-1}}))\circ m_N\\
&=\Ad\left(\Psi_q^{\theta_0}(X^\#_0)\dotsm\Psi_q^{\theta_{N-1}}(X^\#_{N-1})\right)\circ M_N,
\end{split}
\end{equation}
where
\begin{equation}
  M_i=m_1\circ\dotsb\circ m_i:\rdtorus^\Kas_{q,\sigma_i}(\lambda_i)
  \to\rdtorus^\Kas_{q,\sigma}(\lambda),\qquad
X^\#_i=M_i(X_{e_i})\in\rdtorus^\CFr_{q,\sigma}(\lambda).
\end{equation}
Note each $M_i$ is also a rescaled monomial map. In particular, $X_i^\#$ is a
scaled monomial in $\rdtorus^\CFr_{q,\sigma}(\lambda)$.

Choose an irreducible representation
$\rho_\lambda:\rdtorus^\Kas_{q,\sigma}(\lambda)\to\End(W)$. Then
$\rho'=\rho_\lambda\circ M_N\circ\varphi_\ast$ is a representation with the same
central character as $\rho$ when $(r,h)$ is $\varphi$-invariant. Let $B\in\End(W)$ be
the intertwiner between them, that is, $\rho_\lambda(x)=B^{-1}\cdot\rho'(x)\cdot B$.
Then we have
\begin{equation}
\begin{split}
\rho_\lambda(\Theta^{\lambda\varphi(\lambda)}(\varphi_\ast x))&=
\rho_\lambda\left(\Ad(\Psi_q^{\theta_0}(X^\#_0)\dotsm\Psi_q^{\theta_{N-1}}(X^\#_{N-1}))
  \circ M_N(\varphi_\ast x)\right)\\
&=\Ad\left(\rho_\lambda(\Psi_q^{\theta_0}(X^\#_0)
  \dotsm\Psi_q^{\theta_{N-1}}(X^\#_{N-1}))\right)
\rho_\lambda(M_N(\varphi_\ast x))\\
&=\Ad\left(\rho_\lambda(\Psi_q^{\theta_0}(X^\#_0)
  \dotsm\Psi_q^{\theta_{N-1}}(X^\#_{N-1}))B\right)
\rho_\lambda(x).
\end{split}
\end{equation}
In other words, the intertwiner we want in \eqref{eq-inter-def} for the Kashaev
algebras is given by
\begin{equation}
\label{AB}  
  A^\Kas=\rho_\lambda\left(\Psi_q^{\theta_0}(X^\#_0)
    \dotsm\Psi_q^{\theta_{N-1}}(X^\#_{N-1})\right)B.
\end{equation}

\begin{remark}
\label{rem-theta-choice}
In \cite{Ishi}, Ishibashi imposes the condition that
$\hat{s}_N\circ\varphi_\ast=\hat{s}_0$. In \cite{BB:fiber}, the condition is
satisfied by construction. In our case, the two maps differ by some root of unity in
$\mu_n$.

Both Ishibashi and BB construct intertwiners between a $\varphi$-equivariant sequence
of representations of $\rdtorus^\Kas_{q,\sigma_i}(\lambda_i)$ (or rather the pullback
to the Chekhov--Fock algebra). Since the maps $\hat{s}_i$ are also used in the
representation, the extra condition is necessary. We have eliminated the intermediate
representations in the proof, although it is very useful for numerical calculations.
See Section~\ref{sec-geom}.
\end{remark}

\subsection{Proof of Lemma~\ref{lem-key}}
\label{sub.keylem}

We are now ready to give the proof of the key Lemma~\ref{lem-key}. The statement is
trivial if $p=1$ since there is only one puncture weight. Therefore, we assume
$p\ge2$.

We use the notation of the previous section.
Since $P_\lambda$ is a direct summand of $L_\lambda$ by Lemma~\ref{lem-sub-Kas}, we
can choose a symplectic basis $\{\alpha_i,\beta_i\}$ of $L_\lambda$ that includes a
basis of $P_\lambda$ as the last $p-1$ of $\alpha_i$. We can use this basis to
define $\rho_\lambda$ by \eqref{eq-qtorus-rep} with representation space
$W=V^{\otimes(4g-4+2p)}$. In this construction, the action of $\cx[P_\lambda]$ is
diagonal. Therefore, each component is of the form
\begin{equation}
\label{wzv}  
W_{\zeta_v}=V^{\otimes(4g-3+p)}\otimes(V_{\ell_1}\otimes\dotsm\otimes V_{\ell_{p-1}})
\end{equation}
where $\zeta_i=q^{\ell_i}$.

First we deal with $\rho_\lambda(\Psi_q^{\theta_i}(X_i^\#))$. Since
$X_i^\#\in\rdtorus^\CFr_{q,\sigma} (\lambda)$ is a scaled monomial but not in the
center $\cx[P_\lambda]$, by Lemma~\ref{lem-mon-eig}, the eigenvalues of
$\rho_\lambda(X_i^\#)|_{W_{\zeta_v}}$ are all the $n$-th roots of $t_{e_i}$ with
equal multiplicity $n^{4g-4+p}$. Therefore,
\begin{equation}
\det\left(\rho_\lambda(\Psi_q^{\theta_i}(X_i^\#))|_{W_{\zeta_v}}\right)
=\Big(\prod_{x^n=t_{e_i}}\Psi_q^{\theta_i}(x)\Big)^{n^{4g-4+p}}=1.
\end{equation}

The existence of a basis $W_{\zeta_v}$ such that each $\abs{\det(A_{\zeta_v})}=1$ now
follows from the unitarity statement in Lemma~\ref{lem-unitary}. The
more difficult part is to show that each $\det(A_{\zeta_v})$ is a root of unity.

Continuing the proof, we next deal with $B$ from Equation~\eqref{AB}. 
Recall it is the intertwiner associated to the rescaled monomial map
\begin{equation}
M_N\circ\varphi_\ast=f_{\hat{s}_0,\hat{s}_N\circ\varphi_\ast},\qquad
f=m^{\lambda_0\lambda_1}\circ\dotsb\circ m^{\lambda_{N-1}\lambda_N}\circ\varphi_\ast
\in\Sp(L_\lambda).
\end{equation}
Denote the span of the last $p-1$ $\beta_i$ by $\check{P}_\lambda$. Let
$f_1\in\Sp(L_\lambda)$ that fixes $\alpha_i,\beta_i$ for $i=1,\dotsc,4g-3+p$, acts
like $\varphi_\ast$ on $P_\lambda$, and preserves $\check{P}_\lambda$. This
determines the action on $\check{P}_\lambda$ by the symplectic property. Define
$\hat{s}'':L_\lambda\to\cx^\times$ to be the same as $\hat{s}_N\circ\varphi_\ast$ on
$P_\lambda$ and the same as $\hat{s}_0$ on the other symplectic basis elements. With
$f_2=f_1^{-1}\circ f$, we have
\begin{equation}
  M_N\circ\varphi_\ast=(f_1)_{\hat{s}_0\hat{s}''}\circ (f_2)_{\hat{s}''\hat{s}''}\circ
  \id_{\hat{s}'',\hat{s}_N\circ\varphi_\ast}
\end{equation}
with each map an automorphism of $\rdtorus^\Kas_{q,\sigma}(\lambda)$. Thus,
$B=B_1B_2B_3$ where $B_1$ is an intertwiner between $\rho_\lambda$ and
$\rho_1=\rho_\lambda\circ(f_1)_{\hat{s}_0\hat{s}''}$, $B_2$ is an intertwiner between
$\rho_1$ and $\rho_2=\rho_1\circ(f_2)_{\hat{s}''\hat{s}''}$, and $B_3$ is an
intertwiner between $\rho_2$ and
$\rho'=\rho_2\circ\id_{\hat{s}'',\hat{s}_N\circ\varphi_\ast}$. Note that each $B_i$
also has a block decomposition with respect to $W_{\zeta_v}$. Moreover, $B_2$ and
$B_3$ are block diagonal since they preserve puncture weights.

First, we show that the blocks of $B_1$ are the identity matrix. By construction,
$\rho_\lambda(x^{\alpha_i})$ and $\rho_1(x^{\alpha_i})$ are diagonal for the last
$p-1$ $\alpha_i$. Let $P$ be the permutation on $V^{\otimes(p-1)}$ that realizes the
permutation of their simultaneous eigenvalues. Then it is easy to check that
$B_1=\id\otimes P$. In particular, the blocks $B_1|_{W_{\zeta_v}}$ is the identity
matrix in the standard bases.

Now consider the blocks of $B_2$. Recall that the symplectic group is generated by
transvections $T_a(b)=b+\omega(a,b)a$. $f_2\in\Sp(L_\lambda)$ is in the subgroup
where the restriction to $P_\lambda$ is identity. It is easy to see that this
subgroup is generated by transvections $T_a$ with $a\in P_\lambda^\perp$. The
automorphism $(T_a)_{\hat{s}''\hat{s}''}$ is equal to the conjugation by
\begin{equation}
  E_a=\sum_{k\in\ints/n\ints}q^{k^2}(\hat{s}''(a)^{-1}x^a)^k
  \in\rdtorus_{q,\sigma}(P_\lambda^\perp).
\end{equation}
See also \cite[Eqn.~(8.1)]{Ishi} for a different formulation.
Thus, the intertwiner is the image of this element in the corresponding
representation. Since $x^a$ has the same eigenvalues with the same multiplicities
on each $W_{\zeta_v}$, the intertwiner also has this property. In particular, the
blocks of the intertwiner have the same determinant.

Finally, for $B_3$, the automorphism $\id_{\hat{s}'',\hat{s}_N\circ\varphi_\ast}$ is
purely monomial rescaling. We can rescale one generator at a time. If the scalar maps
only differ on $\alpha_i$, then the intertwiner is $1\otimes\dotsm\otimes
C_i\otimes\dotsm 1$ with $C_i$ a cyclic permutation matrix. If the scalar maps only
differ on $\beta_i$, then the intertwiner is of the same form but with
$C_i=\diag(\xi^j)$ where $\xi\in\mu_n$ is the scalar difference. In both cases, $\det
C_i=1$. Combining the intertwiners from all generators, we see $B_3$ decomposes as
$C_1\otimes\dotsm\otimes C_{4g-4+2p}$ with each $\det C_i=1$. By construction, there
is no rescaling for the last $p-1$ $\alpha_i$, which means that the last $p-1$ $C_i$
are diagonal. This shows that each block $B_3|_{W_{\zeta_v}}$ has a similar form of
tensor decomposition, which implies that each block has determinant $1$.

To summarize, we have constructed an intertwiner $A^\Kas$ such that the determinant
of each block is the same. This implies Lemma~\ref{lem-key} by an overall scaling.
\qed


\section{Examples and computations}
\label{sec.ge}

In this section we finally discuss examples and computations. On the one hand,
examples of mapping classes are efficiently described by
\texttt{flipper}~\cite{flipper}. On the other hand, the corresponding mapping tori
have ideal triangulations whose gluing equations describe $\PSL_2(\BC)$-representations
of the triangulated 3-manifold using the methods of \texttt{SnapPy}~\cite{snappy}. 

\subsection{Gluing equations and Neumann--Zagier data}
\label{sub.NZ}

In this section we briefly review the Neumann--Zagier data associated to
an ideally triangulated 3-manifold. A detailed discussion is given
in~\cite{GZ:neumann} (see also~\cite{GY} for a description close to the examples
of our paper).

Let $\triang$ be an ideal triangulation of a cusped hyperbolic 3-manifold $M$ with
$N$ tetrahedra. Label the vertices of each tetrahedron by 0,1,2,3 such that 1,2,3
appear counterclockwise when viewed from $0$. Assign a variable $z_i$ to the 01 edge
of the $i$-th tetrahedron, and let $z'_i=(1-z_i)^{-1}$, $z''_i=1-z_i^{-1}$.

Choose a peripheral curve on each cusp. The Neumann--Zagier data associated to the
triangulation $\triang$ and these peripheral curves is a tuple $(A,B,\nu)$ where
$A,B$ are $N\times N$ integral matrices, and $\nu\in\ints^N$. They define the gluing
equations
\begin{equation}
z^Az^{\prime\prime B}=(-1)^\nu
\end{equation}
whose solutions (with some extra conditions) can be used to describe the hyperbolic
structure. Each $z^\square_i$ is a cross-ratio of the vertices of the tetrahedron
lifted to $\AMSbb{H}^3$.

An important property of the Neumann--Zagier data is that the matrix $(A|B)$ has
full rank over $\ints$. This implies that given a solution to the gluing
equations, we can find logarithm branches for all $z,z''$ such that the
logarithmic equation $A\log z+B\log z''=\pi\iunit \nu$ has a solution.

\subsection{Layered triangulations}

Let $M_\varphi$ be the mapping torus of the mapping class $\varphi$. A triangulation
$\triang$ of $M_\varphi$ with tetrahedra $T_1,\dotsc,T_N$ is layered if there are
cellular immersions $\lambda_i\to\triang^{(2)}, i\in\ints$ from triangulations
$\lambda_i$ of $\surface$ to the 2-skeleton $\triang^{(2)}$ of $\triang$ such
that $\lambda_{i+N}=\varphi(\lambda_i)$, 
$\lambda_0,\dotsc,\lambda_N$ is a sequence of flips such that the immersions of
$\lambda_i$ and $\lambda_{i+1}$ are related by the layered tetrahedron
$T_{i\bmod{N}}$, which is labeled like Figure~\ref{fig-layered-T}. Here, the bottom
two faces belong to $\lambda_i$, and the top two belong to $\lambda_{i+1}$.

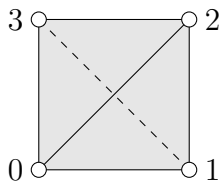
\begin{figure}
\centering
\begin{tikzpicture}
\tikzmath{\r=2;}
\draw[fill=gray!20] (0,0) rectangle (\r,\r);
\draw (0,0) -- (\r,\r);
\draw[dashed] (0,\r) -- (\r,0);
\draw[fill=white,radius=0.1,inner sep=0.2cm] (0,0)circle node[left]{$0$} (\r,0)
circle node[right]{$1$} (\r,\r)circle node[right]{$2$} (0,\r)circle node[left]{$3$};
\end{tikzpicture}
\caption{Labels of a layered tetrahedron.}
\label{fig-layered-T}
\end{figure}

It is clear that the layered triangulation $\triang$ has a canonical taut angle
structure by assigning $\pi$ to the edges 02 and 13 in each tetrahedron. In fact
more is true.
Each face in the layered triangulation $\triang$ inherits an orientation from the
immersions of $\lambda_i$. Given $e\in\lambda_i$, let $F_1,F_2$ be the images in
$\triang^{(2)}$ of the two faces of $\lambda_i$ adjacent to $e$. The set of edges of
tetrahedra that are identified to $e$ is partition by $F_1,F_2$ into those above and
below $\lambda_i$. See Figure~\ref{fig-around-edge}.

\begin{figure}
\centering
\begin{tikzpicture}
\tikzmath{\r0=2;\c=0.3;\h0=1;\r1=\r0-\c;\h1=\h0+\c;}
\draw (-\r1,-\h1) to[out=20,in=180] (-\c,-\h0)coordinate(bottom)
  to[out=0,in=160] (\r1,-\h1)
  to[out=105,in=-45] (\c,\h0)coordinate(top)
  to[out=-150,in=75] cycle;
  \draw[fill=gray!20] (-\r0,0)node[left]{$F_1$} -- (bottom)
  -- (\r0,0)node[right]{$F_2$} -- (top) -- cycle;
\draw[fill=white] (-\r1,\h1) to[out=-75,in=135] (bottom)
  to[out=30,in=-105] (\r1,\h1)
  to[out=-160,in=0] (top)
  to[out=180,in=-20] cycle;
\draw (top) --node[left]{$e$} (bottom);
\end{tikzpicture}
\caption{Tetrahedra around the edge $e\in\lambda_i$}
\label{fig-around-edge}
\end{figure}
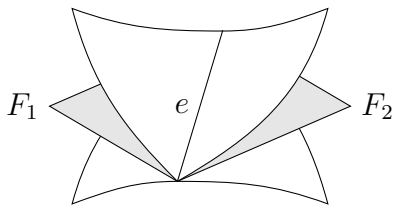

\subsection{The geometric representation}
\label{sec-geom}

Suppose $\varphi$ is pseudo-Anosov. Then $M_\varphi$ has a complete hyperbolic
structure, whose holonomy $\pi_1(M_\varphi)\to\PSL_2(\cx)$ is discrete and faithful.
As a result, the restriction $r:\pi_1(\surface)\to\PSL_2(\cx)$ is a
$\varphi$-invariant generic character of the surface $\surface$.

Let $\triang$ be the layered triangulation of $M_\varphi$. By the discussion at the
end of \cite[Section~3]{DGY}, $\triang$ admits Ptolemy assignments, so $r$ admits
decorations. Moreover, the gluing equations of $\triang$ has a solution corresponding
to the hyperbolic structure, and the shear-bend coordinates $t_e=\sigma_i(\chi_e)$ of
all $e\in\lambda_i$ can be calculated from this solution. From
Figure~\ref{fig-around-edge}, we see the shear-bend coordinate of $e\in\lambda_i$ is
minus the product of the shape parameters above $e$. As a simple example, the
shear-bend coordinates of the flipped edges in $\lambda_i$ and $\lambda_{i+1}$ are
$-z'_i$ and $-1/z'_i$ respectively using the convention above. Conversely, given
shear-bend coordinates of $\lambda_0$, the coordinates of all $\lambda_i$ are
determined using the coordinate change $\Theta^{\lambda_0\lambda_i}_1$. Then $z'_i$
are determined by shear-bend coordinates as above. Note that this implies
$\theta_i^n=z''_i$ where $\theta_i$ is from Section~\ref{sec-factor-chg}.

Since we can always find logarithmic branches given a solution to the Neumann--Zagier
equations, the shear-bend coordinates also acquire logarithmic branches. Our choice
is that the logarithm of a shear-bend coordinate is the corresponding sum of the
logarithmic shape parameters minus $\pi\iunit$. Continuing the example of the flipped
edges, the logarithmic shear-bend are $\log(z'_i)-\pi\iunit$ and
$\pi\iunit-\log(z'_i)$ respectively.

The existence of consistent logarithmic branches implies that the scalar maps
$\hat{s}_i$ and roots $\theta_i$ used in Section~\ref{sec-factor-chg} can be chosen
so that $\hat{s}^N\circ\varphi_\ast=\hat{s}_0$. This is not necessary for the
construction, but it simplifies calculation.

\subsection{Explicit calculations}

The invariants considered in this paper are suitable for calculations. This has
been implemented, and the code is available at
\url{https://github.com/newcworld001/BBBLWY}.

Flip descriptions of mapping classes are available in \texttt{flipper}, and
\texttt{SnapPy} converts \texttt{flipper} mapping classes into the layered
triangulation of the mapping torus with a solution to the gluing equations. Then
we can calculate all shear-bend coordinates as well as their logarithmic
branches as explained above.

The quantum torus is implemented using a modified class from SageMath
\cite{sagemath}, and representations are implemented using
\eqref{eq-qtorus-rep}. From this, the $\gl_1$-invariant
$A^H_{\varphi,\zeta_v}(q)$ can be calculated by Lemma~\ref{lem-unitary}. The
BLWY intertwiner $A^\CF_{\varphi,r,\zeta_v}(q)$ is similar.

For the BB intertwiner $A^\Kas_{\varphi,r}(q)$, we use Ishibashi's explicit
formulas in \cite{Ishi} since they only use the shear-bend coordinates.
Alternatively, one can lift the shear-bend coordinates to Kashaev coordinates
and use the same algorithm as the other two invariants.

In the rest of the section we discuss examples of genus 1 surfaces with 1 and 2
punctures. 

\subsection{Examples on $\surface_{1,1}$}
\label{sub.ex11}

The standard and simplest example in genus 1 with 1 puncture
is the complement of the $4_1$ knot, which has fiber $\surface_{1,1}$
and monodromy $T_aT_b^{-1}$, where $a,b$ are the standard curves, and $T$ denotes
Dehn twists. The BLWY invariant is given explicitly in \cite{BWY:I,GY} with
slightly different conventions. The evaluation of these formulas at
$q=\exp(\frac{16}{15}\pi\iunit)$ agrees with the numerical values of BB invariants
from \cite[Table~3]{BB:analytic}. This is consistent with \eqref{inv-dec3}.

\subsection{Examples on $\surface_{1,2}$}
\label{sub.ex12}

For more interesting examples, consider the surface $\surface_{1,2}$. Let
$a,b,c$ be a chain of curves, and $x$ be the ideal arc that intersects $c$ at a
point and is disjoint from the others. See Figure~\ref{fig-S12}, where a
triangulated picture is also given. As usual, opposite sides of the square are
identified. Let $T_a,T_b,T_c$ denote the Dehn twists, and $X$ denote the half
twist along $x$.

\begin{figure}
\centering
\begin{tikzpicture}[baseline=(ref.base)]
\draw (0,0)ellipse[x radius=2,y radius=1];
\draw (-0.5,0) to[curve through={(0,0.1)}] (0.5,0);
\draw (-0.8,0.2) to[curve through={(-0.5,0)..(0,-0.1)..(0.5,0)}] (0.8,0.2);
\begin{scope}[thick]
\draw (0,0)ellipse[x radius=1.2,y radius=0.6];
\path (1.2,0)node[right]{$b$};
\draw (0,-1)arc[start angle=-90,end angle=90,x radius=0.2,y radius=0.45];
\draw[dashed] (0,-0.1)arc[start angle=90,end angle=270,
	x radius=0.2,y radius=0.45];
\path (0,-1)node[below]{$c$};
\draw (0,0.1)arc[start angle=-90,end angle=90,x radius=0.2,y radius=0.45];
\draw[dashed] (0,1)arc[start angle=90,end angle=270,
	x radius=0.2,y radius=0.45];
\path (0,1)node[above]{$a$};
\draw (-2,0)arc[x radius=2,y radius=1,start angle=-180,end angle=0]
  node[near end,below right]{$x$};
\end{scope}
\draw[fill=white] (-2,0)circle[radius=0.1] (2,0)circle[radius=0.1];
\tikzbase{0,0};
\end{tikzpicture}
\qquad
\begin{tikzpicture}[baseline=(ref.base)]
\draw (-1,-1) rectangle (1,1) (-1,-1) -- (1,1) (-1,1) -- (1,-1);
\begin{scope}[thick]
\draw (-0.6,-1) -- ++(0,2) node[above]{$a$};
\draw (0.4,-1) -- ++(0,2) node[above]{$c$};
\draw (-1,-0.5) -- node[below,inner sep=1pt]{$b$} ++(2,0);
\draw (0,0) -- node[pos=0.6,below right,inner sep=1pt]{$x$} (1,1);
\end{scope}
\draw[fill=white,radius=0.1] (0,0)circle[]
  foreach \x in {-1,1} {foreach \y in {-1,1} {(\x,\y)circle[]}};
\tikzbase{0,0};
\end{tikzpicture}
\caption{Curves on $\surface_{1,2}$}\label{fig-S12}
\end{figure}

The working precision of the following examples is 256 bits or roughly 77
decimal digits. All numerical equalities below hold to at least 75 decimal
places.

\begin{enumcc}[$\bullet$]
\item $\varphi=T_b^5T_aT_c^{-1}$.
Its mapping torus is \texttt{t09265} in the \texttt{SnapPy} census. $\varphi$
can be realized as a sequence of 11 flips, part of which is shown in
Figure~\ref{fig-example-flips}. By the convention of \texttt{flipper}, the new
edge after flip has the same label as before. Each edge also has a choice of
positive direction used by \texttt{flipper} for many purposes. Here, it is
useful because it breaks the symmetry of the surface, so the final triangulation
can be uniquely identified with the initial one by a relabeling. In this
example, the final edge 0 is identified with the reversed initial edge 0.

\begin{figure}
\centering
\begin{tikzpicture}[baseline=(ref.base)]
\draw (-1,-1) rectangle (1,1) (-1,-1) -- (1,1) (-1,1) -- (1,-1);
\begin{scope}[every node/.style={circle,fill=white,inner sep=1pt,node font=\scriptsize}]
\path (-1,0)node{$0$} (1,0)node{$0$} (0,-1)node{$3$} (0,1)node{$3$};
\path (0.5,0.5)node{$1$} (-0.5,0.5)node{$2$}
  (-0.5,-0.5)node{$4$} (0.5,-0.5)node{$5$};
\end{scope}
\tikzmath{\r=0.85;}
\path[-o-={\r}{>}] (0,0) -- (1,1); 
\path[-o-={\r}{>}] (-1,-1) -- (0,0); 
\path[-o-={\r}{>}] (-1,1) -- (0,0); 
\path[-o-={\r}{>}] (0,0) -- (1,-1); 
\path[-o-={\r}{>}] (-1,1) -- (-1,-1); 
\path[-o-={\r}{>}] (1,1) -- (1,-1); 
\path[-o-={\r}{>}] (1,1) -- (-1,1); 
\path[-o-={\r}{>}] (1,-1) -- (-1,-1); 
\draw[fill=white,radius=0.1] (0,0)circle[]
  foreach \x in {-1,1} {foreach \y in {-1,1} {(\x,\y)circle[]}};
\tikzbase{0,0};
\end{tikzpicture}
$=$
\begin{tikzpicture}[baseline=(ref.base)]
\draw (-1,-1) -- (1,1) -- (0,2) -- (-1,1) -- (1,-1) -- (1,1) -| cycle;
\begin{scope}[every node/.style={circle,fill=white,inner sep=1pt,node font=\scriptsize}]
\path (-1,0)node{$0$} (1,0)node{$0$}
  (-0.5,-0.5)node{$4$} (-0.5,1.5)node{$4$}
  (0.5,-0.5)node{$5$} (0.5,1.5)node{$5$};
\path (0.5,0.5)node{$1$} (-0.5,0.5)node{$2$} (0,1)node{$3$};
\end{scope}
\draw[fill=white,radius=0.1] (0,0)circle[] (0,2)circle[]
  foreach \x in {-1,1} {foreach \y in {-1,1} {(\x,\y)circle[]}};
\tikzbase{0,0};
\end{tikzpicture}
$\xrightarrow{\text{Flip }3}$
\begin{tikzpicture}[baseline=(ref.base)]
\draw (-1,1) -- (-1,-1) -- (1,1) -- (0,2) -- (-1,1) -- (1,-1) -- (1,1)
  (0,0) -- (0,2);
\begin{scope}[every node/.style={circle,fill=white,inner sep=1pt,node font=\scriptsize}]
\path (-1,0)node{$0$} (1,0)node{$0$}
  (-0.5,-0.5)node{$4$} (-0.5,1.5)node{$4$}
  (0.5,-0.5)node{$5$} (0.5,1.5)node{$5$};
\path (0.5,0.5)node{$1$} (-0.5,0.5)node{$2$} (0,1)node{$3$};
\end{scope}
\draw[fill=white,radius=0.1] (0,0)circle[] (0,2)circle[]
  foreach \x in {-1,1} {foreach \y in {-1,1} {(\x,\y)circle[]}};
\tikzbase{0,0};
\end{tikzpicture}
$\xrightarrow{\text{Flips }1,2}$
\begin{tikzpicture}[baseline=(ref.base)]
  \draw (-1,-1) -- (0,0) -- (1,-1) -- (1,1) -- (0,2) -- (-1,1) -- cycle -- (0,2)
  -- (1,-1) (0,0) -- (0,2);
\begin{scope}[every node/.style={circle,fill=white,inner sep=1pt,node font=\scriptsize}]
\path (-1,0)node{$0$} (1,0)node{$0$}
  (-0.5,-0.5)node{$4$} (-0.5,1.5)node{$4$}
  (0.5,-0.5)node{$5$} (0.5,1.5)node{$5$};
\path (0.5,0.5)node{$1$} (-0.5,0.5)node{$2$} (0,1)node{$3$};
\end{scope}
\draw[fill=white,radius=0.1] (0,0)circle[] (0,2)circle[]
  foreach \x in {-1,1} {foreach \y in {-1,1} {(\x,\y)circle[]}};
\tikzbase{0,0};
\end{tikzpicture}
\vskip1em
$=$
\begin{tikzpicture}[baseline=(ref.base)]
\draw (-1,1) -- (-2,2)  -- (-1,-1) -- (0,0) -- (1,-1) -- (0,2) -- cycle
  -- (-1,-1) -- (0,2) -- (0,0);
\begin{scope}[every node/.style={circle,fill=white,inner sep=1pt,node font=\scriptsize}]
\path (-1.5,0.5)node{$1$} (0.5,0.5)node{$1$} 
  (-0.5,-0.5)node{$4$} (-0.5,1.5)node{$4$}
  (0.5,-0.5)node{$5$} (-1.5,1.5)node{$5$};
\path (-1,0)node{$0$} (-0.5,0.5)node{$2$} (0,1)node{$3$};
\end{scope}
\draw[fill=white,radius=0.1] foreach \p in {(0,0),(0,2),(1,-1),(-1,1),(-1,-1),(-2,2)}
{\p circle[]};
\tikzbase{0,0};
\end{tikzpicture}
$\xrightarrow{\text{Flips }0,3}$
\begin{tikzpicture}[baseline=(ref.base)]
\draw (-1,1) -- (-2,2)  -- (-1,-1) -- (0,0) -- (1,-1) -- (0,2) -- cycle
  (-1,-1) -- (0,2);
\draw (-2,2) ..controls(-1,-0.2).. coordinate(M0) (0,2)
  (-1,-1) ..controls(0,1.2).. coordinate(M3) (1,-1);
\begin{scope}[every node/.style={circle,fill=white,inner sep=1pt,node font=\scriptsize}]
\path (-1.5,0.5)node{$1$} (0.5,0.5)node{$1$} 
  (-0.5,-0.5)node{$4$} (-0.5,1.5)node{$4$}
  (0.5,-0.5)node{$5$} (-1.5,1.5)node{$5$};
\path (M0)node{$0$} (-0.5,0.5)node{$2$} (M3)node{$3$};
\end{scope}
\draw[fill=white,radius=0.1] foreach \p in {(0,0),(0,2),(1,-1),(-1,1),(-1,-1),(-2,2)}
{\p circle[]};
\tikzbase{0,0};
\end{tikzpicture}
$\xrightarrow{\text{Flips }2,1,2,1,2,0}$
\begin{tikzpicture}[baseline=(ref.base)]
\draw (-1,-1) rectangle (1,1) (-1,-1) -- (1,1) (-1,1) -- (1,-1);
\begin{scope}[every node/.style={circle,fill=white,inner sep=1pt,node font=\scriptsize}]
\path (-1,0)node{$0$} (1,0)node{$0$} (0,-1)node{$3$} (0,1)node{$3$};
\path (0.5,0.5)node{$5$} (-0.5,0.5)node{$4$}
  (-0.5,-0.5)node{$1$} (0.5,-0.5)node{$2$};
\end{scope}
\tikzmath{\r=0.85;}
\path[-o-={\r}{>}] (0,0) -- (1,1); 
\path[-o-={\r}{>}] (0,0) -- (-1,-1); 
\path[-o-={\r}{>}] (-1,1) -- (0,0); 
\path[-o-={\r}{>}] (1,-1) -- (0,0); 
\path[-o-={\r}{>}] (-1,-1) -- (-1,1); 
\path[-o-={\r}{>}] (1,-1) -- (1,1); 
\path[-o-={\r}{>}] (-1,1) -- (1,1); 
\path[-o-={\r}{>}] (-1,-1) -- (1,-1); 
\draw[fill=white,radius=0.1] (0,0)circle[]
  foreach \x in {-1,1} {foreach \y in {-1,1} {(\x,\y)circle[]}};
\tikzbase{0,0};
\end{tikzpicture}
\caption{Part of the sequence of flips for $T_b^5T_aT_c^{-1}$}\label{fig-example-flips}
\end{figure}
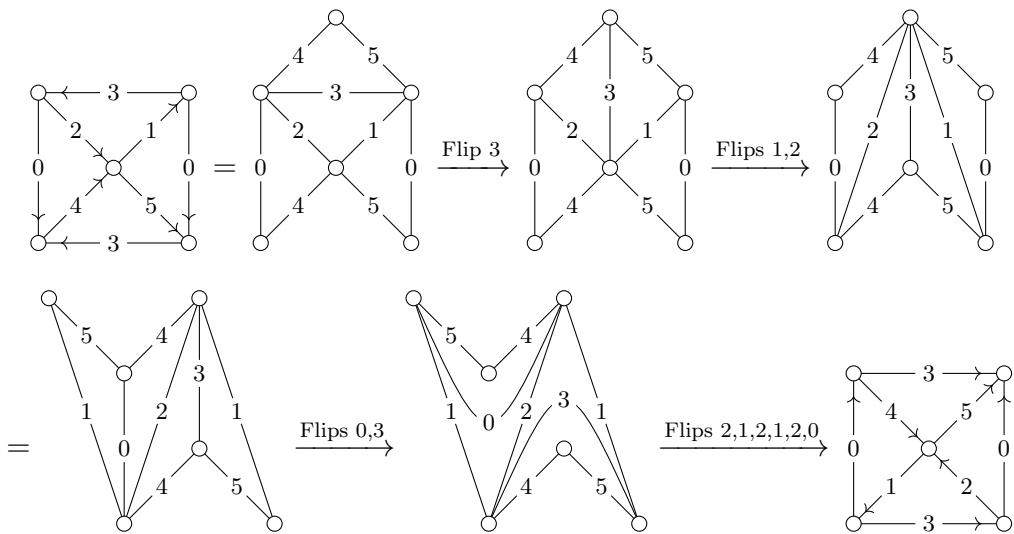

An easy calculation from surgery diagrams give
\begin{equation}
\abs{T^H_{\varphi,\zeta_v}(q)}=
\begin{cases}
d\sqrt{n/d},&\zeta_v=(1,1),\\
0,&\text{otherwise},
\end{cases}
\end{equation}
where $d=\gcd(n,5)$. Then Theorem~\ref{thm.1} says
\begin{equation}
\abs{T^\Kas_{\varphi,r}(q)}=d\sqrt{n/d}\abs{T^\CF_{\varphi,r,1}(q)}.
\end{equation}
Numerical calculations confirm this. For $q=\exp(2\pi\iunit/3)$, we find
\begin{equation}
\abs{T^\Kas_{\varphi,r}(q)}=\sqrt{3}\abs{T^\CF_{\varphi,r,1}(q)}=13.444319.
\end{equation}
On the other hand, for $q=\exp(2\pi\iunit/5)$, we find
\begin{equation}
\abs{T^\Kas_{\varphi,r}(q)}=5\abs{T^\CF_{\varphi,r,1}(q)}=31.451090.
\end{equation}
\item $\varphi=T_b^{-1}T_aT_cX$.
Its mapping torus is \texttt{s254} in
the \texttt{SnapPy} census. If the punctures are filled in, $X$ is trivial, and
$a=c$, so the mapping class becomes $\widehat{\varphi}=T_b^{-1}T_a^2$, whose mapping
torus has homology $\ints/2\oplus\ints$. By Corollary~\ref{cor-TH-values}, all
$\gl_1$-invariants have absolute value 1. Since $X$ permutes the punctures, the only
invariant puncture weights are $\zeta_v=(1,1)$. By Theorem~\ref{thm.1},
$\abs{T^\Kas_{\varphi,r}(q)}=\abs{T^\CF_{\varphi,r,1}(q)}$. We confirm this
numerically at $q=\exp(2\pi\iunit/3)$ which gives
\begin{equation}
\abs{T^\Kas_{\varphi,r}(q)}=\abs{T^\CF_{\varphi,r,1}(q)}=4.19825.
\end{equation}
\item $\varphi=T_a^2T_bT_c^{-1}$.
Its mapping torus is the $9^2_{50}$ link complement. Again, we find that all
$\gl_1$-invariants
have absolute value 1. Since the punctures are not permuted by $\varphi$, now all
puncture weights contribute. At $q=\exp(2\pi\iunit/3)$, we find numerically
\begin{equation}
T^\Kas_{\varphi,r}(q)=-T^\CF_{\varphi,r,(1,1)}(q)-2T^\CF_{\varphi,r,(q,q^{-1})}(q).
\end{equation}
Here, the factor $2$ is due to the symmetry of the surface and the mapping class.
Note the minus signs come from the $\gl_1$-invariants, not the sign in
Theorem~\ref{thm.1}.
\end{enumcc}


\subsection*{Acknowledgements} 

The authors wish to thank St\'ephane Baseilhac, Francis Bonahon, Thang L\^e,
Helen Wong, Tian Yang and Campbell Wheeler for many enlightening conversations. 


\bibliographystyle{hamsalpha}
\bibliography{biblio}
\end{document}